\let\OLDthebibliography\thebibliography
\renewcommand\thebibliography[1]{
  \OLDthebibliography{#1}
  \setlength{\parskip}{0pt}
  \setlength{\itemsep}{1pt plus 0.3ex}
}
\theoremstyle{plain}
	\newtheorem{thm}{Theorem}[section]
	\newtheorem{prop}[thm]{Proposition}
	\newtheorem{lemma}[thm]{Lemma}
	\newtheorem{prob}[thm]{Problem}
	\newtheorem{probb}{Problem}
\theoremstyle{definition}
	\newtheorem{defn}[thm]{Definition}
	\newtheorem{rem}[thm]{Remark}
\def\a{\mathfrak{a}}
\def\Ad{\text{\normalfont{Ad}}}
\def\ad{\text{\normalfont{ad}}}
\def\C{{\mathbb C}}
\def\Geg{\widetilde{C}}
\def\Diff{\text{\normalfont{Diff}}}
\def\spanned{\text{\normalfont{span}}}
\def\Weyl{\mathcal{D}}
\def\D{\mathbb{D}}
\def\End{\text{\normalfont{End}}}
\def\g{\mathfrak{g}}
\def\h{\mathfrak{h}}
\def\Harm{\mathcal{H}}
\def\Hom{\text{\normalfont{Hom}}}
\def\id{\text{\normalfont{id}}}
\def\Ind{\text{\normalfont{Ind}}}
\def\Im{\text{\normalfont{Im}}}
\def\l{\mathfrak{l}}
\def\L{\mathcal{L}}
\def\m{\mathfrak{m}}
\def\N{{\mathbb N}}
\def\n{\mathfrak{n}}
\def\p{\mathfrak{p}}
\def\Pol{\text{\normalfont{Pol}}}
\def\R{{\mathbb R}}
\def\Re{\text{\normalfont{Re}}}
\def\Rest{\text{\normalfont{Rest}}}
\def\so{\mathfrak{so}}
\def\Sol{\text{\normalfont{Sol}}}
\def\stab{\text{\normalfont{stab}}}
\def\supp{\text{\normalfont{supp }}}
\def\Symb{\text{\normalfont{Symb}}}
\def\tr{\text{\normalfont{tr }}}
\def\V{\mathcal{V}}
\def\W{\mathcal{W}}
\def\Z{{\mathbb Z}}
\newcommand\arrowsimeq{\stackrel{\mathclap{\thicksim}}{\longrightarrow}}
\newcommand\arrowiota{\stackrel{\mathclap{\iota}}{\hookrightarrow}}
\renewcommand*{\arraystretch}{0.6} 
\setlist[enumerate]{topsep=0pt, itemsep=-4pt} 
\setlist[itemize]{topsep=0pt, itemsep=-4pt}
\begin{document}
\title{Conformally covariant differential symmetry breaking operators for a vector bundle of rank 3 on $S^3$}
\author{V\'{i}ctor P\'{E}REZ-VALD\'{E}S
\thanks{Graduate School of Mathematical Sciences, The University of Tokyo\\Email: perez@ms.u-tokyo.ac.jp}}
\date{}
\maketitle
\vspace{-0.5cm}
\begin{abstract}
We construct and give a complete classification of all the differential symmetry breaking operators $\D_{\lambda, \nu}^m: C^\infty(S^3, \mathcal{V}_\lambda^3) \rightarrow C^\infty(S^2, \mathcal{L}_{m, \nu})$, between the spaces of smooth sections of a vector bundle of rank $3$ over the $3$-sphere $\mathcal{V}_\lambda^3  \rightarrow S^3$, and a line bundle over the $2$-sphere $\mathcal{L}_{m, \nu} \rightarrow S^2$. In particular, we give necessary and sufficient conditions on the tuple of parameters $\left(\lambda, \nu, m\right)$ for which these operators exist.
\end{abstract}

\renewcommand{\contentsname}{Table of contents}
\tableofcontents

\section{Introduction}
Given a representation $\Pi$ of a Lie group $G$, and Lie subgroup $G^\prime \subset G$, we can consider the restriction $\Pi\rvert_{G^\prime}$ of $\Pi$ to $G^\prime$ and think about its nature: how does it behave? Or, more concretely, how is $\Pi\rvert_{G^\prime}$ decomposed (in a broad sense) into irreducible representations of $G^\prime$? The problem of understanding the behaviour of the restriction, in particular, its irreducible decomposition (supposed that $\Pi$ is irreducible), is what in representation theory is called a \textit{branching problem}.

T. Kobayashi proposed a program (\textit{the ABC Program} \cite{kob2015}), divided in three stages, to address these branching problems for real reductive Lie groups from different points of view:

\begin{enumerate}[itemsep=1pt]
\item[$\bullet$] \textbf{Stage A}: Abstract features of the restriction.
\item[$\bullet$] \textbf{Stage B}: Branching laws.
\item[$\bullet$] \textbf{Stage C}: Construction of symmetry breaking operators.
\end{enumerate}

The goal of Stage A is to analyze abstract aspects of the restriction $\Pi\rvert_{G^\prime}$, such as estimates on the multiplicities of irreducible representations of $G^\prime$ occurring in $\Pi\rvert_{G^\prime}$, or the study of the spectrum of $\Pi\vert_{G^\prime}$. 
Note that while the multiplicities may be infinite even when $G^\prime$ is a maximal subgroup of G, there are some cases when they are finite or even uniformly bounded (\cite{kob-osh}). There are also cases when the multiplicities are at most one, as is the case of visible actions (\cite{kob2013b}), or the Gross-Prasad conjecture (\cite{gross-prasad}, \cite{sun-zhu}), among others.

Stage B addresses the irreducible decomposition of the restriction. If $\Pi$ is a finite-dimensional representation such that the restriction $\Pi\rvert_{G^\prime}$ is completely reducible, it is clear what its irreducible decomposition means (as is the case of the Clebsch-Gordan formula for the tensor product of finite-dimensional irreducible representations of $SL(2,\R)$). When $\Pi$ is a unitary representation (not necessarily finite-dimensional), we can think about its irreducible decomposition by using the direct integral of Hilbert spaces (Mautner--Teleman theorem \cite{mautner}, \cite{teleman}). In a more general situation when $\Pi$ is not necessarily unitary, Stage B may be thought of as the study of the space $\Hom_{G^\prime}(\Pi\vert_{G^\prime}, \pi)$ (or $\Hom_{G^\prime}(\pi, \Pi\vert_{G^\prime})$), where $\pi$ is a fixed representation of $G^\prime$.

The $G^\prime$-intertwining operators in $\Hom_{G^\prime}(\Pi\vert_{G^\prime}, \pi)$ are called \textbf{symmetry breaking operators}. Stage C asks for concrete symmetry breaking operators in order to understand better the nature of the space $\Hom_{G^\prime}(\Pi\vert_{G^\prime}, \pi)$. 

In contrast with the abstract flavour of Stages A and B, Stage C depends on concrete realizations of the representations, so it often interacts with geometric or analytic problems.

In this paper, we focus on Stage C. Concretely, we focus on the construction of symmetry breaking operators that can be written as \emph{differential} operators for a concrete pair of reductive Lie groups $(G, G^\prime)$. One classical example following this line is given by the Rankin--Cohen bidifferential operators (\cite{cohen}, \cite{rankin}), which are a special case of symmetry breaking operators for the tensor product of two holomorphic discrete series representations of $SL(2,\R)$. 

In the following, we give a specific geometric setting that we consider throughout the paper. This setting allows us to consider Stage C in a significantly wide class of cases. 

Given two smooth manifolds $X$ and $Y$, two vector bundles over them $\V \rightarrow X$ and $\W \rightarrow Y$ and a smooth map $p: Y \rightarrow X$, we can talk about the notion of a \textit{differential operator} between the spaces of smooth sections $T:C^\infty(X, \V) \rightarrow C^\infty(Y, \W)$, following the line of the famous result of J. Peetre (\cite{peetre}, \cite{peetre2}) about the characterization of differential operators in terms of their support (see Definition \ref{def-diffop}). In addition, suppose that we have a pair of Lie groups $G^\prime \subset G$ acting equivariantly on $\W \rightarrow Y$ and $\V \rightarrow X$ respectively, and that $p$ is a $G^\prime$-equivariant map between $Y$ and $X$. In this setting, we can consider the following problem:

\begin{prob}\label{prob-first} Give a description of the space of $G^\prime$-intertwining differential operators (\textbf{differential symmetry breaking operators})	
$$D: C^\infty(X,\V) \rightarrow C^\infty(Y, \W).$$
\end{prob}

Even if this is a geometric setting for Problem \ref{prob-first}, it is very general and differential symmetry breaking operators may or may not exist. For instance, if $\W$ is  isomorphic to the pullback $p^*\V$, the restriction map $f \mapsto f\lvert_Y$ is clearly a $G^\prime$-intertwining differential operator from $C^\infty(X, \V)$ to $C^\infty(Y, \W)$. 

In the general setting where there is no morphism between $p^*\V$ and $\W$, there are cases where non-zero differential symmetry breaking operators exist, but determining when this occurs is a considerably hard task, and there are many unsolved problems. As for the ones that have been solved during the past years, for example, T. Kobayashi and B. Speh constructed and classified not only the ones that can be written as differential operators, but all the symmetry breaking operators for the tuple $(X, Y, G, G^\prime) = (S^n, S^{n-1}, O(n+1,1), O(n,1))$ and for a concrete pair of vector bundles $(\V, \W)$ (\cite{kob-speh1}, \cite{kob-speh2}).

In the case where $X = G/P \supset Y = G^\prime/P^\prime$ are flag varieties and the vector bundles $\V, \W$ are those associated to representations of the parabolic subgroups $P$ and $P^\prime$ respectively, T. Kobayashi proposed a method (\textit{the F-method} \cite{kob2013}) to construct and classify all differential symmetry breaking operators
\begin{equation*}
D: C^\infty(G/P, \V) \rightarrow C^\infty(G^\prime/P^\prime, \W).
\end{equation*}

This F-method allows us to reduce the problem of constructing differential symmetry breaking operators to the problem of finding polynomial solutions of a system of PDE's by applying the \lq\lq algebraic Fourier transform\rq\rq{} to certain generalized Verma modules, and
to use invariant theory to solve the latter. It is known (\cite[Prop. 3.10]{kob-pev1}) that if the nilradical of the parabolic subgroup $P$ is abelian, then the principal term of the equations of the system of PDE's is of order two.

In this setting, T. Kobayashi and M. Pevzner used the F-method to construct differential symmetry breaking operators for some concrete pairs of hermitian symmetric spaces $X \supset Y$ (\cite{kob-pev2}). In a slightly different context, T. Kobayashi together with B. \O rsted, V. Souček and P. Somberg used the F-method to solve a similar problem in the setting of conformal geometry. In addition, in \cite{kkp} all conformal symmetry breaking operators for differential forms on spheres were constructed and classified by T. Kobayashi, T. Kubo and M. Pevzner.

In the present paper, we consider the case $(X,Y, G, G^\prime) = (S^3, S^2, SO_0(4,1), SO_0(3,1))$ and solve Problem \ref{prob-first} for a concrete pair of vector bundles $(\V, \W) = (\V_\lambda^3,\L_{m, \nu})$ (see (\ref{vectorbundle-V}) and (\ref{vectorbundle-L}) for the definition). In particular, we provide complete solutions of the following two problems (see Theorems \ref{mainthm1} and \ref{mainthm2} respectively):

\begin{probb} Give necessary and sufficient conditions on the parameters $\lambda, \nu \in \C$, $m \in \Z\setminus\{0\}$
such that the space
\begin{equation}\label{DSBO-space}
\Diff_{SO_0(3,1)}\left(C^\infty(S^3, \V_\lambda^3), C^\infty(S^2, \L_{m,\nu})\right)
\end{equation}
of differential symmetry breaking operators
$\D_{\lambda, \nu}^m: C^\infty(S^3, \V_\lambda^3)\rightarrow C^\infty(S^2, \L_{m,\nu})$
is non-zero. In particular, determine
\begin{equation*}
\dim_\C \Diff_{SO_0(3,1)}\left(C^\infty(S^3, \V_\lambda^3), C^\infty(S^2, \L_{m,\nu})\right).
\end{equation*}
\end{probb}

\begin{probb} Construct explicitly the generators
 \begin{equation*}
\D_{\lambda, \nu}^m \in \Diff_{SO_0(3,1)}\left(C^\infty(S^3, \V_\lambda^3), C^\infty(S^2, \L_{m,\nu})\right).
\end{equation*}
\end{probb}

Throughout the paper we use the following notation: $\N:= \{0, 1, 2, \ldots\}$, $\N_+:= \{1, 2, 3, \ldots\}$.
\subsection{Main results}\label{section-mainthms}
In this subsection, we state our main results, which provide a complete solution to Problems A and B. We start with the solution to Problem A.

\begin{thm}\label{mainthm1} Let $\lambda, \nu \in \C$ and $m \in \Z\setminus\{0\}$. Then, the following three conditions on the triple $(\lambda, \nu, m)$ are equivalent:
\begin{enumerate}[label=\normalfont{(\roman*)}]
\item $\Diff_{SO_0(3,1)}\left(C^\infty(S^3, \V_\lambda^3), C^\infty(S^2, \L_{m,\nu})\right) \neq \{0\}$.
\item $\dim_\C \Diff_{SO_0(3,1)}\left(C^\infty(S^3, \V_\lambda^3), C^\infty(S^2, \L_{m,\nu})\right) = 1$.
\item The triple $(\lambda, \nu, m)$ belongs to one of the following cases.
	\begin{enumerate}[leftmargin=1.5cm]
	\item[\normalfont{Case 1.}] $m = \pm1$ and $\nu - \lambda \in \N$.
	\item[\normalfont{Case 2.}] $|m| > 1$, $\lambda \in \Z_{\leq 1-|m|}$ 		and $\nu \in \{0, 1, 2\}$.
	\end{enumerate}
\end{enumerate}
\end{thm}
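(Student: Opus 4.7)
The plan is to apply Kobayashi's F-method \cite{kob2013}, which identifies the space of differential symmetry breaking operators with a space of polynomial solutions of an explicit system of PDEs. Since the nilradical of the minimal parabolic of $SO_0(4,1)$ is abelian, this system is of second order, as recalled in the introduction. First I would realize $S^3 \simeq G/P$ and $S^2 \simeq G'/P'$ with $G = SO_0(4,1)$, $G' = SO_0(3,1)$, and identify the bundles $\V_\lambda^3$ and $\L_{m,\nu}$ as those induced from the relevant representations of $P$ and $P'$: a $3$-dimensional representation of the Levi of $P$ depending on $\lambda$, and a character of the Levi of $P'$ depending on $(m,\nu)$. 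By Frobenius reciprocity combined with the algebraic Fourier transform, one obtains a canonical bijection between the space of differential symmetry breaking operators and the space of vector-valued polynomials on $\n_+ \simeq \R^3$ satisfying a Levi-equivariance condition together with the second-order PDEs coming from the action of $\n'_+$.

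Once the system is written down, I would exploit the $SO(2)$-equivariance, where $SO(2)$ denotes the compact part of the Levi of $P'$. This subgroup acts on $\n_+$ with weight $0$ on the $\R$-direction transverse to $\n'_+$ and with weights $\pm 1$ on $\n'_+ \simeq \R^2$, and on the fiber of $\V_\lambda^3$ with weights $0$ and $\pm 1$. The requirement that the output sits in a line bundle of $SO(2)$-weight $m$ cuts out a specific homogeneous weight space in $\Pol(\n_+) \otimes V_\lambda$, which together with the homogeneity governed by $\nu$ reduces the problem to a manageable linear recurrence on the coefficients of the polynomial expansion in the transverse variable. In a well-chosen orthogonal basis of polynomials in two variables, these recurrences should close into hypergeometric-type three-term relations whose polynomial solutions are completely understood.

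The final step is the case analysis establishing (i) $\Leftrightarrow$ (iii): in Case~1 ($m = \pm 1$) the weight $\pm 1$ is already available inside the fiber of $\V_\lambda^3$, which yields a solution for every $\nu - \lambda \in \N$; in Case~2 ($|m| > 1$) no weight-$m$ vector exists in the fiber, so the weight $m$ must be produced entirely by the $\n'_+$-polynomial factor, and the resulting recurrence forces both the termination condition $\lambda \in \Z_{\leq 1-|m|}$ and the quantization $\nu \in \{0, 1, 2\}$. Uniqueness, namely that the dimension equals exactly $1$, follows because in either case the recurrence determines the solution up to scalar once a leading coefficient is fixed. The main obstacle I foresee is the combinatorial analysis of Case~2: simultaneously tracking the three-dimensional fiber, the bidegree in the $\n'_+$-directions and the degree in the transverse direction, and proving that exactly three values of $\nu$ produce non-zero solutions, is the heart of the classification and requires careful bookkeeping.
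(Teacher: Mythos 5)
Your strategy coincides with the paper's: the F-method reduces the problem to computing $\Sol(\n_+;\sigma^3_\lambda,\tau_{m,\nu})$, Step~1 is exactly the $SO(2)\times A$ weight analysis (the $A$-action forces homogeneity degree $a=\nu-\lambda\in\N$, and the $SO(2)$-action singles out the three harmonic components of degrees $|m|-1,|m|,|m|+1$, so for $|m|>1$ the weight must indeed be carried by the polynomial factor), and Step~2 reduces the $\n_+'$-equations to a system of ODEs of Gegenbauer type in the transverse variable. So the outline is the right one.

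The gap is that the outline stops exactly where the theorem begins. You assert that "the resulting recurrence forces" $\lambda\in\Z_{\leq 1-|m|}$ and $\nu\in\{0,1,2\}$, and that uniqueness holds because "the recurrence determines the solution up to scalar once a leading coefficient is fixed," but the system one actually obtains is \emph{overdetermined}: in the paper it consists of six coupled equations for the triple $(g_{|m|-1},g_{|m|},g_{|m|+1})$, of which four are imaginary Gegenbauer equations determining $g_{|m|\pm1}$ and producing \emph{two} a priori different expressions for $g_{|m|}$ — one built from $\widetilde C^{\lambda-1}_{a+|m|}(it)$, the other from $\widetilde C^{\lambda-1}_{a-|m|}(it)$ — plus two first-order relations fixing the relative constants. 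The entire content of condition (iii) is the compatibility of these two expressions: one must determine precisely when the coefficients of $t^{a+|m|},\dots,t^{a-|m|+2}$ in the first expression vanish (this is where the integrality $\lambda\in\{-a,1-a,2-a\}$, equivalently $\nu\in\{0,1,2\}$, and hence $\lambda\leq 1-|m|$, come from), match the remaining lower-order coefficients, and separately verify that for $m=\pm1$ these conditions impose no constraint on $\lambda$. Your sketch does not supply any of this; moreover, the claim that existence in Case~1 holds "because the weight $\pm1$ is already available inside the fiber" conflates the non-vanishing of the $L'$-equivariant Hom-space (Step~1) with the solvability of the PDE system (Step~2), which for $m=\pm1$ still requires the full Gegenbauer analysis. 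Finally, you should say how $m\leq-1$ is deduced from $m\geq1$ (the paper constructs an explicit involution intertwining the two F-systems); your weight analysis is symmetric in $m\mapsto-m$, so this is repairable, but it is not automatic from what you wrote.
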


In Figure~\ref{fig-parameters}, we can see the relations on the parameters in Case 2 of Theorem \ref{mainthm1} above. For a given $(\nu, m)$ (marked with $\times$), the pairs $(\lambda, m)$ that satisfy the condition of Case 2 are marked with {\footnotesize{$\blacksquare$}}.

\begin{rem} The infinitesimal characters of $G = SO_0(4,1)$ on $C^\infty(S^3, \V_\lambda^3)$ and of $G^\prime = SO_0(3,1)$ on $C^\infty(S^2, \L_{m,\nu})$ are respectively:
\begin{equation*}
\left(\frac{3}{2}, \lambda - \frac{3}{2}\right) \text{ and } \left(m, \nu -1\right).
\end{equation*}
We identify a Cartan subalgebra $\h$ of the complexified Lie algebra $\g \simeq \so(5,\C)$ with $\C^2$. The Weyl group is given by $W(\Delta(\g,\h)) = W(BC_2) \simeq \mathfrak{S}_2 \ltimes \left(\Z/2\Z\right)^2$, and its action on $\h^* \simeq \C^2$ is generated by:
\begin{equation*}
\begin{aligned}
\C^2 & \rightarrow \C^2\\
(x,y) & \mapsto (y,x),\\
(x,y) & \mapsto (-x,y),\\
(x,y) & \mapsto (x,-y).
\end{aligned}
\end{equation*}
Analogously, we identify a Cartan subalgebra $\h^\prime$ of the complexified Lie algebra $\g^\prime \simeq \so(4,\C)$ again with $\C^2$. The Weyl group is given by $W(\Delta(\g^\prime,\h^\prime)) = W(D_2) \simeq \mathfrak{S}_2 \ltimes \Z/2\Z$, and its action on $(\h^\prime)^* \simeq \C^2$ is generated by:
\begin{equation*}
\begin{aligned}
\C^2 & \rightarrow \C^2\\
(x,y)& \mapsto (y,x),\\
(x,y)& \mapsto (-x,-y).\\
\end{aligned}
\end{equation*}
Thus, we have bijections
\begin{equation*}
\begin{aligned}
\Hom_{\C\text{-algebra}}\left(Z(\g), \C\right) &\simeq \C^2/W(\Delta(\g,\h)),\\
\Hom_{\C\text{-algebra}}\left(Z(\g^\prime), \C\right) &\simeq \C^2/W(\Delta(\g^\prime,\h^\prime)),
\end{aligned}
\end{equation*}
where $Z(\g)$ (respectively $Z(\g^\prime)$) is the center of the universal enveloping algebra $U(\g)$ (respectively $U(\g^\prime)$) of $\g$ (respectively $\g^\prime$).
We \lq\lq normalize\rq\rq{} the Harish-Chandra isomorphism in a way that the infinitesimal characters of the trivial one-dimensional representation $\textbf{1}$ of $G$ and $G^\prime$ are given as follows:
\begin{equation*}
\rho_G = \left(\frac{3}{2}, \frac{1}{2}\right), \quad \rho_{G^\prime} = \left(1,0\right).
\end{equation*}
\end{rem}

\begin{figure}
\centering
\includegraphics[scale=0.7]{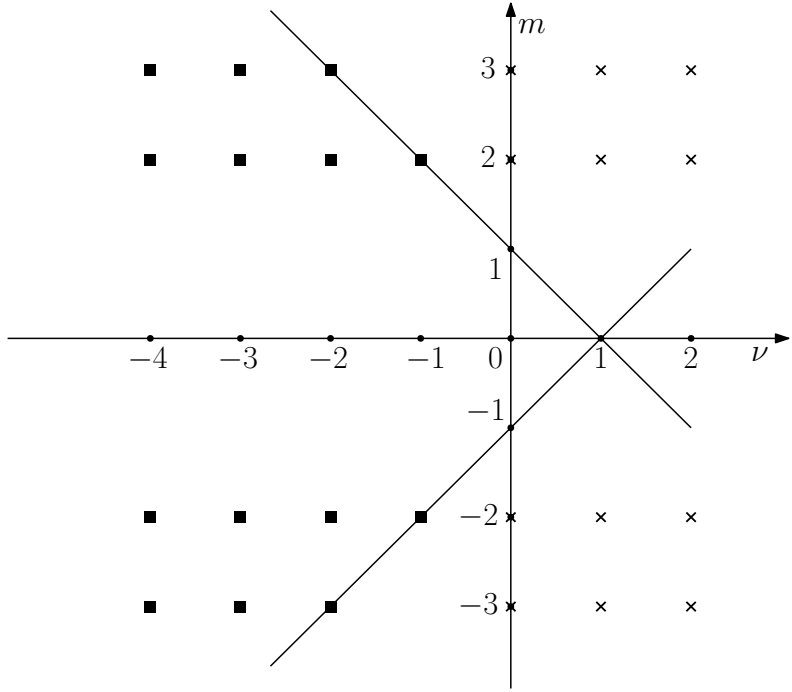}
\caption{relation of the parameters in Case 2 of Theorem \ref{mainthm1}}\label{fig-parameters}
\end{figure}

It follows from Theorem \ref{mainthm1} that the dimension of the space of differential symmetry breaking operators (\ref{DSBO-space}) is at most one. In Theorem \ref{mainthm2} below, we give an explicit formula of the generator $\D_{\lambda, \nu}^m$ in terms of the coordinates $(x_1, x_2, x_3) \in \R^3$ via the conformal compactification $\R^3 \arrowiota S^3$:
\begin{equation*}
\begin{tikzcd}
C^\infty(S^3, \V_\lambda^3) \arrow[d, hookrightarrow, "{\displaystyle{\iota^*}}"'] \arrow[r, dashed] & C^\infty(S^2, \L_{m, \nu}) \arrow[d, hookrightarrow, "{\displaystyle{\iota^*}}"']\\
C^\infty(\R^3, V^3) \arrow[r, "\D_{\lambda, \nu}^m"] & C^\infty(\R^2)
\end{tikzcd}
\end{equation*}
where $\R^2 \subset \R^3$ is realized as $\R^2 = \{(x_1, x_2, 0) : x_1, x_2 \in \R\}$.

To give an explicit formula of $\D_{\lambda, \nu}^m$, let $\{u_1, u_2, u_3\}$ be the standard basis of $V^3 \simeq \C^3$ (see (\ref{basis-V3})), and denote by $\{u_1^\vee, u_2^\vee, u_3^\vee\}$ the dual basis of $(V^3)^\vee$. We write $z = x_1 +ix_2$ so that the Laplacian $\Delta_{\R^2} = \frac{\partial^2}{\partial x_1^2} + \frac{\partial^2}{\partial x_2^2}$ on $\R^2$ is given as $\Delta_{\R^2} = 4\frac{\partial^2}{\partial z \partial \overline{z}}$.

Suppose $\nu-\lambda \in \N$ and let $\widetilde{\C}_{\lambda, \nu}$ denote the following (scalar-valued) differential operator (cf. \cite[2.22]{kkp})

\begin{equation}\label{def-operator-Ctilda}
\begin{aligned}
\widetilde{\C}_{\lambda, \nu} & = \Rest_{x_3 = 0} \circ \left(I_{\nu - \lambda} \widetilde{C}^{\lambda-1}_{\nu-\lambda}\right)\left(-4\frac{\partial^2}{\partial z \partial \overline{z}}, \frac{\partial}{\partial x_3}\right),
\end{aligned}
\end{equation}
where $(I_\ell\Geg_\ell^\mu)(x,y) := x^{\frac{\ell}{2}}\Geg_\ell^\mu\left(\frac{y}{\sqrt{x}}\right)$ is a polynomial of two variables associated with the renormalized Gegenbauer polynomial (see (\ref{Gegenbauer-polynomial(renormalized)})).

 We define constants $A, B, C \in \C$ as follows, where for $\mu \in \C$ and $\ell \in \N$,  $\gamma(\mu, \ell) = 1$ ($\ell$ odd), $\gamma(\mu,\ell) = \mu + \frac{\ell}{2}$ ($\ell$ even) (see (\ref{gamma-def})):
\begin{align}
A &:= (-1)^\nu\left(\lambda + \left[\frac{-\lambda-|m|}{2}\right]\right)^{1-\nu} =
\begin{cases}
\lambda + \left[\frac{-\lambda-|m|}{2}\right], &\text{if } \nu = 0,\label{const-A}\\
-1, &\text{if } \nu = 1,\\
\left(\lambda + \left[\frac{-\lambda-|m|}{2}\right]\right)^{-1}, &\text{if } \nu = 2,
\end{cases}
\\[6pt]
B &:= -2\gamma(\lambda-1, \nu-\lambda-|m|),\label{const-B}\\[4pt]
C &:= |m|(\nu-1) + \lambda - 2.\label{const-C}
\end{align}

Now, we give the complete solution to Problem B.

\begin{thm}\label{mainthm2} Let $\lambda, \nu \in \C$ and $m \in \Z\setminus\{0\}$. Then, any differential symmetry breaking operator in {\normalfont{(\ref{DSBO-space})}} is proportional to $\D_{\lambda, \nu}^m$ given as follows:
\begin{enumerate}[leftmargin=2cm]
\item[\normalfont{Case 1.}] $m = \pm1$ and $\nu - \lambda \in \N$.
\begin{itemize}[leftmargin=-0.5cm]
\item[\normalfont{$\bullet$}] $\nu - \lambda = 0:$
\begin{equation}\label{Operator-particular1}
\D_{\lambda, \lambda}^1 = \Rest_{x_3 = 0} \otimes u_1^\vee,
\end{equation}
\begin{equation}\label{Operator-particular-1}
\D_{\lambda, \lambda}^{-1} = \Rest_{x_3 = 0} \otimes u_3^\vee.
\end{equation}
\item[\normalfont{$\bullet$}] $\nu - \lambda \geq 1:$
\begin{multline}\label{Operator-general1}
\D_{\lambda,\nu}^{1}  =  \left(\lambda + \left[\frac{\nu-\lambda-1}{2}\right]\right) \widetilde{\C}_{\lambda+1, \nu+1} \otimes u_1^\vee\\
+2\gamma(\lambda-1, \nu-\lambda)\widetilde{\C}_{\lambda+1, \nu} \frac{\partial}{\partial \overline{z}} \otimes u_2^\vee
+ 4\widetilde{\C}_{\lambda+1, \nu-1} \frac{\partial^2}{\partial \overline{z}^2} \otimes u_3^\vee,
\end{multline}
\begin{multline}\label{Operator-general-1}
\D_{\lambda,\nu}^{-1} = 4\widetilde{\C}_{\lambda+1, \nu-1} \frac{\partial^2}{\partial z^2} \otimes u_1^\vee\\ -2\gamma(\lambda-1, \nu-\lambda)\widetilde{\C}_{\lambda+1, \nu} \frac{\partial}{\partial z} \otimes u_2^\vee
+ \left(\lambda + \left[\frac{\nu-\lambda-1}{2}\right]\right) \widetilde{\C}_{\lambda+1, \nu+1} \otimes u_3^\vee.
\end{multline}
\end{itemize}
\item[\normalfont{Case 2.}] $|m| > 1$, $\lambda \in \Z_{\leq 1-|m|}$ and $\nu \in \{0, 1, 2\}$.
\begin{itemize}[leftmargin=-0.8cm]
\item[\normalfont{$\bullet$}] $\nu-\lambda = |m|-1:$
\begin{itemize}[leftmargin=0.5cm]
\item[$*$] $m >1$
\begin{equation}\label{Operator-particular+}
\hspace{-0.7cm}\D_{\lambda, \lambda + m-1}^m = \Rest_{x_3 = 0} \circ \frac{\partial^{m-1}}{\partial \overline{z}^{m-1}} \otimes u_1^\vee,
\end{equation}
\item[$*$] $m <-1$
\begin{equation}\label{Operator-particular-}
\D_{\lambda, \lambda-m-1}^m = \Rest_{x_3 = 0} \circ \frac{\partial^{-m-1}}{\partial z^{-m-1}} \otimes u_3^\vee.
\end{equation}
\end{itemize}
\item[\normalfont{$\bullet$}] $\nu - \lambda \geq |m|:$
\begin{itemize}[leftmargin=0.5cm]
\item[$*$] $m >1$
\begin{multline}\label{Operator-general+}
\D_{\lambda, \nu}^m =
2^{2\nu-1}AB \widetilde{\C}_{\lambda+1, 2-\nu-m}\frac{\partial^{2\nu+m-1}}{\partial z^{\nu} \partial \overline{z}^{\nu+m-1}}\otimes u_1^\vee\\
+ \left(-C\widetilde{\C}_{\lambda, \nu-m} + B\frac{\partial}{\partial x_3}\widetilde{\C}_{\lambda+1, \nu -m} \right)\frac{\partial^{m}}{\partial \overline{z}^{m}} \otimes u_2^\vee\\
+ 2B\widetilde{\C}_{\lambda+1, \nu-m} \frac{\partial^{m+1}}{\partial \overline{z}^{m+1}} u_3^\vee,
\end{multline}
\end{itemize}
\begin{itemize}[leftmargin=0.5cm]
\item[$*$] $m < -1$
\begin{multline}\label{Operator-general-}
 \D_{\lambda, \nu}^m = 
 +  2B\widetilde{\C}_{\lambda+1, \nu+m} \frac{\partial^{-m+1}}{\partial z^{-m+1}}\otimes u_1^\vee\\
+ \left(C\widetilde{\C}_{\lambda, \nu+m} - B\frac{\partial}{\partial x_3}\widetilde{\C}_{\lambda+1, \nu +m} \right)\frac{\partial^{-m}}{\partial z^{-m}} \otimes u_2^\vee\\
+2^{2\nu-1}AB \widetilde{\C}_{\lambda+1, 2-\nu+m}\frac{\partial^{2\nu-m-1}}{\partial z^{\nu-m-1} \partial \overline{z}^{\nu}}\otimes u_3^\vee.
\end{multline}
\end{itemize}
\end{itemize}
\end{enumerate}
\end{thm}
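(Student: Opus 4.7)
The natural tool is the F-method of Kobayashi \cite{kob2013}. Realize $S^3 = G/P$ and $S^2 = G^\prime/P^\prime$ via the conformal compactification, where $P, P^\prime$ are minimal parabolics stabilizing the chosen basepoints. The nilradicals are abelian, and the conformal charts $\R^3 \hookrightarrow S^3$, $\R^2 \hookrightarrow S^2$ in the statement of the theorem are the corresponding unipotent orbits. By the F-method, together with \cite[Prop.~3.10]{kob-pev1}, the space (\ref{DSBO-space}) is in bijection with the space of polynomial solutions $\psi \in \Pol[\zeta_1, \zeta_2, \zeta_3] \otimes (V^3)^\vee$ of a second-order system of PDEs, obtained as the algebraic Fourier transform of the infinitesimal action of the nilradical of $P^\prime$ not contained in $P$ on the induced module.

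Decompose $\psi = \sum_{j=1}^3 \psi_j \otimes u_j^\vee$. The scalar case (line bundle on $S^3$ to line bundle on $S^2$) was solved in \cite{kkp}, where the unique generator is, up to scalar, the Juhl-type operator $\widetilde{\C}_{\lambda, \nu}$ of (\ref{def-operator-Ctilda}). This suggests an ansatz in which each $\psi_j$, on inverse Fourier transform, is a composition of an appropriately shifted $\widetilde{\C}_{\lambda+\varepsilon, \nu+\delta_j}$ with a monomial in $\partial/\partial z$ and $\partial/\partial \overline{z}$. The shifts $\delta_j$ and the orders of the differential monomials are forced by matching the $M_P$-weight of the component $u_j^\vee$ with the line bundle character $(m, \nu)$ of $\L_{m,\nu}$, producing the skeleton of (\ref{Operator-general1})--(\ref{Operator-general-}).

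Substituting this ansatz into the F-system, I would use the three-term recurrences for the renormalized Gegenbauer polynomials (and the basic algebraic identities for $(I_\ell \widetilde{C}_\ell^\mu)(x,y)$) to reduce the differential equations to linear algebraic relations among the coefficients of the three components. These relations fix the ratios of the constants $A:B:C$ given in (\ref{const-A})--(\ref{const-C}); the overall scalar is free, in agreement with Theorem \ref{mainthm1}(ii). The degenerate sub-cases---$\nu = \lambda$ in Case 1 and $\nu - \lambda = |m|-1$ in Case 2---correspond precisely to the specializations where $\widetilde{\C}_{\lambda, \nu}$ collapses to $\Rest_{x_3 = 0}$ and certain components vanish identically, accounting for formulas (\ref{Operator-particular1})--(\ref{Operator-particular-1}) and (\ref{Operator-particular+})--(\ref{Operator-particular-}).

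The main technical obstacle lies in this final substitution step: the F-system couples $\psi_1, \psi_2, \psi_3$ non-trivially through the off-diagonal action of the nilradical on $V^3$, and this off-diagonal action is exactly what produces the mixed derivative terms $\partial/\partial z$ and $\partial/\partial \overline{z}$ appearing in (\ref{Operator-general+}) and (\ref{Operator-general-}). Matching these across components while tracking the degeneration loci dictated by Theorem \ref{mainthm1}(iii) is where the constants $A$, $B$, $C$ emerge. Once the explicit formulas are in hand, non-vanishing can be checked by evaluation on a convenient polynomial test section, and uniqueness up to scalar is then immediate from Theorem \ref{mainthm1}.
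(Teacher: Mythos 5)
Your overall framework is the paper's: the F-method reduces (\ref{DSBO-space}) to the space $\Sol(\n_+;\sigma_\lambda^3,\tau_{m,\nu})$ of polynomial solutions of the Fourier-transformed system, and Gegenbauer identities do the heavy lifting. But where the paper \emph{derives} the solution, you propose to \emph{guess} it and verify, and the verification — which is the entire content of the argument — is left unexecuted. Concretely, the paper first uses the $L^\prime$-equivariance (\ref{F-system-1-concrete}) and the $SO(2)$-harmonic decomposition of $\Pol(\n_+)$ to show that any candidate must have the rigid form $\psi=\sum_{k=m-1}^{m+1}(T_{a-k}g_k)\,h_k^{\pm}$ with $g_k$ \emph{one-variable even polynomials} (Lemma \ref{lemma-Harm}, Proposition \ref{Firststep-prop}); it then proves the F-system is equivalent to six Gegenbauer-type ODEs in the $g_k$ (Theorem \ref{Thm-findingequations}), and solves those (Theorem \ref{Thm-solvingequations}). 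Your ansatz skips the first reduction and posits the final answer directly, so "substituting into the F-system and using three-term recurrences" is a coupled three-variable PDE computation for which you have given no concrete handle; and the genuinely hard part — the compatibility analysis between the two expressions forced on the middle component $g_m$ (Lemma \ref{lemma-equation-pqrs}), which is precisely where the integrality constraints $\lambda\in\Z_{\leq 1-|m|}$, $\nu\in\{0,1,2\}$ and the constants $A,B,C$ of (\ref{const-A})--(\ref{const-C}) come from — is not addressed at all. You also do not say how $m\leq -1$ is obtained; the paper does not redo the computation but uses an explicit involution $\Phi$ intertwining the F-systems for $\pm m$ (Proposition \ref{prop-duality_m_and_-m}).

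One further caution: you close by deducing uniqueness "from Theorem \ref{mainthm1}". That is logically admissible only if Theorem \ref{mainthm1} has an independent proof, but in the paper both theorems are consequences of the same computation (Theorem \ref{Thm-step2}); an ansatz-verification alone cannot establish the dimension bound, so as written your argument either presupposes the multiplicity-one statement or is circular. To make the proposal a proof you would need to carry out the $SO(2)$-reduction, establish the ODE system, and perform the case analysis of vanishing Gegenbauer coefficients — i.e., the substance of Sections \ref{section-step1}--\ref{section-proof_of_solving_equations}.
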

Both Theorems \ref{mainthm1} and \ref{mainthm2} will be proved in Section \ref{section-proof-of-main-thms} for $m \geq 1$ and in Section \ref{section-case_m_lessthan_-1} for $m \leq -1$.

\begin{rem}\label{rem-mainthms}
(1) Note that the case $m = 0$ is not included in Theorems \ref{mainthm1} and \ref{mainthm2} since it is already known. The results for $m = 0$ can be deduced from \cite[Thms. 2.8 and 2.9]{kkp} by using the branching rule $O(3,1) \downarrow SO_0(3,1)$. In fact, using the same notation as in \cite{kkp}, for $\nu, \lambda \in \C$ by \cite[(2.33), (2.34)]{kkp} we have: 
\begin{equation*}
\begin{aligned}
I(1, \lambda)_\alpha\big|_{SO_0(4,1)} &\simeq I(1,\lambda) \simeq I(2,\lambda) \simeq C^\infty(S^3, \V^3_\lambda),\\
J(0, \nu)_\beta\big|_{SO_0(3,1)} &\simeq J(0, \nu) \simeq J(2, \nu) \simeq C^\infty(S^2, \L_{0, \nu}).
\end{aligned}
\end{equation*}
Now, for $\delta := \nu -\lambda \mod 2\Z$ by \cite[Thm. 2.10]{kkp} we deduce
\begin{equation*}
\begin{aligned}
\Diff_{SO_0(3,1)}&\big(C^\infty(S^3, \V^3_\lambda), C^\infty(S^2, \L_{0, \nu})\big) \\
& \simeq \Diff_{SO_0(3,1)}\left(I(1,\lambda), J(0,\nu)\right) \\
& \simeq
\Diff_{O(3,1)}\left(I(1,\lambda)_0, J(0, \nu)_\delta\right) \oplus
\Diff_{O(3,1)}\left(I(2,\lambda)_0, J(0, \nu)_{\delta}\right)\\
& \simeq \C\widetilde{\C}_{\lambda, \nu}^{1,0} \oplus \C\widetilde{\C}_{1,2}^{2,0}.
\end{aligned}
\end{equation*}
In particular, the space above is non-zero if and only if $\nu - \lambda \in \N$, and its dimension is smaller than or equal to two, being two only in the case $(\lambda, \nu) = (1,2)$.

(2) Although it cannot be deduced from \cite{kkp}, the cases $m = \pm 1$ are related to the results in \cite{kkp} in a similar way as in the case $m = 0$ above. As before, note that for $\nu, \lambda \in \C$ we have: 
\begin{equation*}
\begin{gathered}
J(1, \nu)_\beta\big|_{SO_0(3,1)} \simeq J(1, \nu) \simeq C^\infty(S^2, \L_{1, \nu}) \oplus C^\infty(S^2, \L_{-1, \nu}).
\end{gathered}
\end{equation*}
Thus, for $\delta := \nu -\lambda \mod 2\Z$ by \cite[Thm. 2.10]{kkp} we deduce
\begin{equation*}
\begin{aligned}
\Diff_{SO_0(3,1)}&\left(C^\infty(S^3, \V^3_\lambda), C^\infty(S^2, \L_{1, \nu}) \oplus C^\infty(S^2, \L_{-1, \nu})\right) \\
& \simeq \Diff_{SO_0(3,1)}\left(I(1,\lambda), J(1, \nu)\right) \\
& \simeq \Diff_{O(3,1)}\left(I(1,\lambda)_0, J(1, \nu)_\delta\right) \oplus
\Diff_{O(3,1)}\left(I(1,\lambda)_0, J(1, \nu)_{\delta}\right)\\ 
&\simeq \C\widetilde{\C}_{\lambda, \nu}^{1,1} \oplus \C\widetilde{\C}_{\lambda, \nu}^{1,1}.
\end{aligned}
\end{equation*}
In fact, we will show that the differential operator $\widetilde{\C}_{\lambda, \nu}^{1,1}$ can be given as a linear combination of the operators $\D_{\lambda, \nu}^1$ and $\D_{\lambda, \nu}^{-1}$ (see Section \ref{section-KoKuPe}).
\end{rem}

\section{The F-method for flag varieties}
In this section we summarize the key tool we use to prove our main results: The F-method (\cite{kob2013}, \cite{kob2014}, \cite{kob-pev1}, \cite{kob-pev2}). This method is based on the \lq\lq algebraic Fourier transform\rq\rq{} of generalized Verma modules (\cite[Sec. 3]{kob-pev1}). 

First, we start recalling the definition of the algebraic Fourier transform. Let $E$ be a finite-dimensional complex vector space and denote by $\Weyl(E)$ the Weyl algebra over $E$, i.e., the ring of holomorphic differential operators on $E$ with polynomial coefficients.

\begin{defn}[{\cite[Def. 3.1]{kob-pev1}}] The \textbf{algebraic Fourier transform} is defined as an isomorphism between the Weyl algebras on $E$ and its dual space $E^\vee$
\begin{equation*}
\Weyl(E) \rightarrow \Weyl(E^\vee), \enspace T\mapsto \widehat{T},
\end{equation*}
induced by the following relations
\begin{equation*}
\widehat{\frac{\partial}{\partial z_j}} := -\zeta_j, \enspace \widehat{z_j}:= \frac{\partial}{\partial \zeta_j}, \quad \text{ for all } 1\leq j \leq n := \dim E,
\end{equation*}
where $(z_1, \ldots, z_n)$ are coordinates on $E$ and $(\zeta_1, \ldots, \zeta_n)$ are the dual coordinates on $E^\vee$.
\end{defn}

Now, let $X, Y$ be two smooth manifolds and suppose that a smooth map between them $p: Y \rightarrow X$ is given. Let $\V, \W$ be two vector bundles over $X$ and $Y$ respectively. We write $C^\infty(X, \V)$ (resp. $C^\infty(Y, \W)$) for the space of smooth sections, endowed with the Fréchet topology of uniform convergence of sections and their derivatives of finite order on compact sets. 

We recall the definition of a \textit{differential operator}, not only for sections on the same manifold, but in general for sections on two different manifolds.

\begin{defn}[{\cite[Def. 2.1]{kob-pev1}}]\label{def-diffop} A continuous linear map $T: C^\infty(X, \V)\rightarrow C^\infty(Y, \W)$ is said to be a \textbf{differential operator} if it satisfies
\begin{equation*}
p\left(\supp Tf \right) \subset \supp f, \quad \text{for all } f \in C^\infty(X,\V).
\end{equation*}
\end{defn}

Moreover, suppose that $G$ is a reductive Lie group acting transitively on $X$ such that $X = G/P$ for some parabolic subgroup $P$ of $G$, and that $G^\prime \subset G$ is a reductive subgroup of $G$ acting transitively on $Y$ such that $Y = G^\prime/P^\prime$ for some parabolic subgroup $P^\prime$ of $G^\prime$. 

Let $P = MAN_+$ be the Langlands decomposition of $P$ and denote the Lie algebras of $P, M, A, N_+$ by $\p(\R), \m(\R), \a(\R), \n_+(\R)$ and their complexified Lie algebras by $\p, \m, \a, \n_+$ respectively. Let $\g(\R) = \n_-(\R) + \p(\R)$ be the Gelfand--Naimark decomposition of $\g(\R)$. A similar notation will be used for $P^\prime$.

Given $\lambda \in \a^* \simeq \Hom_\R(\a(\R), \C)$, we define a one-dimensional representation $\C_\lambda$ of $A$ by 
\begin{equation*}
A \rightarrow \C^\times, \enspace a\mapsto a^\lambda := e^{\langle\lambda, \log a\rangle}.
\end{equation*}
Let $(\sigma, V)$ be a representation of $M$ and let $\lambda \in \a^*$. We write $\sigma_\lambda \equiv \sigma \boxtimes \C_\lambda$ for the representation of $MA$ given by $ma \mapsto a^\lambda\sigma(m)$. This representation may be also regarded as a representation of $P$ by letting $N_+$ act trivially. We define $\V := G \times_P V$ as the $G$-equivariant vector bundle over $X = G/P$ associated to $\sigma_\lambda$. Analogously, given $(\tau, W)$ a representation of $M^\prime$ and $\nu \in \a^*$, we set $\tau_\nu \equiv \tau \boxtimes \C_\nu$ and define $\W := G^\prime \times_{P^\prime} W$ as the $G^\prime$-equivariant vector bundle over $Y$ associated to $\tau_\nu$.

Now, let $\C_{2\rho}$ be the following one-dimensional representation of $P$
\begin{equation*}
P \ni p \mapsto |\det \left(\Ad(p): \n_+(\R) \rightarrow \n_+(\R)\right)|,
\end{equation*}
whose infinitesimal representation is given by
\begin{equation*}
\a(\R) \ni Z \mapsto \tr \left(\ad(Z): \n_+(\R) \rightarrow \n_+(\R)\right).
\end{equation*}
Observe that the density bundle $\Omega_X = |\bigwedge^\text{top}T^\vee X|$ over $X$ is given by the line bundle $G \times_P \C_{2\rho}$. For $\lambda \in \a^*$ we define a new representation $\mu$ of $P$ by taking the tensor product of the contragredient representation of $\sigma_\lambda$ and $\C_{2\rho}:$
\begin{equation*}
\mu \equiv \sigma^*_\lambda := \sigma^\vee \boxtimes \C_{2\rho-\lambda},
\end{equation*}
and form the induced representation
\begin{equation*}
\pi_\mu \equiv \pi_{(\sigma, \lambda)^*} = \Ind_{P}^G(\sigma_\lambda^*)
\end{equation*}
of $G$ on $C^\infty(X,\V^*)$, where $\V^* := G \times_P \sigma_\lambda^*$. We may regard this space as a subspace of $C^\infty(\n_-(\R), V^\vee) \simeq C^\infty(\n_-(\R)) \otimes V^\vee$ via the restriction to the open Bruhat cell $\n_-(\R) \hookrightarrow G/P = X$.

The infinitesimal action $d \pi_\mu \equiv d \pi_{(\sigma, \lambda)^*}$ induces (see \cite{kob-pev1}) a Lie algebra homomorphism 
\begin{equation*}
d\pi_\mu: \g\rightarrow \Weyl(\n_-) \otimes \End(V^\vee).
\end{equation*}
The composition of $d\pi_\mu$ with the algebraic Fourier transform leads to a Lie algebra homomorphism
\begin{equation*}
\widehat{d\pi_\mu}: \g\rightarrow \Weyl(\n_+) \otimes \End(V^\vee),
\end{equation*}
where we identify $\n_-^\vee \simeq \n_+$.

In fact, a closed formula for $d \pi_\mu$ was given in \cite{kob-pev1} as follows. Take $G_\C$ and $P_\C = L_\C\exp(\n_+)$ connected complex Lie groups with Lie algebras $\g$ and $\p = \l + \n_+ = \m + \a + \n_+$ respectively. According to the Gelfand--Naimark decomposition $\g = \n_- + \l + \n_+$ of $\g$, we have a diffeomorphism
\begin{equation*}
\n_- \times L_\C \times \n_+ \rightarrow G_\C, (Z, \ell, Y) \mapsto (\exp Z) \ell (\exp Y),
\end{equation*}
onto an open dense subset $G_\C^\text{reg}$ of $G_\C$. Now, let
\begin{equation*}
p_\pm: G_\C^\text{reg} \rightarrow \n_\pm, \quad p_0: G_\C^\text{reg}\rightarrow L_\C,
\end{equation*}
be the projections characterized by
\begin{equation*}
\exp(p_-(g))p_0(g)\exp(p_+(g)) = g,
\end{equation*}
and let $\alpha, \beta$ be the following maps independent of the choice of the complex Lie group $G_\C$:
\begin{equation*}
\begin{aligned}
\alpha: \g \times \n_- &\rightarrow \l, &(Y,Z)\mapsto \frac{d}{dt}\Big|_{t=0}p_0(e^{tY}e^Z),\\
\beta: \g \times \n_- &\rightarrow \n_-, &(Y,Z)\mapsto \frac{d}{dt}\Big|_{t=0}p_-(e^{tY}e^Z).
\end{aligned}
\end{equation*}
Then, for $F \in C^\infty(\n_-, V^\vee)$, the following formula holds \cite[(3.13)]{kob-pev1}:
\begin{equation}\label{dpimu-general_expression}
\left(d \pi_\mu(Y)F\right)(Z) = d\mu(\alpha(Y,Z))F(Z) - \left(\beta(Y, \cdot)F\right)(Z), \enspace \text{ for } Y \in \g.
\end{equation}
Here $\beta(Y, \cdot)$ is regarded as a holomorphic vector field on $\n_-$ through the identification 
\begin{equation*}
\n_- \ni Z \mapsto \beta(Y,Z) \in \n_- \simeq T_Z \n_-.
\end{equation*}

\begin{defn}[{\cite[Sec. 4.1]{kob-pev1}}] Let $E$ be a finite-dimensional complex vector space and denote by $E^\vee$ its dual space. Write $n:= \dim E$ and let $\Diff^\text{const}(E)$ denote the ring of differential operators over $E$ with constant coefficients. Then, there is a natural isomorphism
\begin{equation}\label{symbolmap}
\Symb: \Diff^\text{const}(E) \arrowsimeq \Pol(E^\vee),
\hspace{0.3cm} \sum_{\alpha \in \N^n} a_\alpha \frac{\partial^\alpha}{\partial z^\alpha} \longmapsto  \sum_{\alpha \in \N^n} a_\alpha \zeta^\alpha, 
\end{equation}
which we will call \textbf{the symbol map}.
\end{defn}
We let
\begin{equation*}
\Hom_{L^\prime}\left(V, W \otimes \Pol(\n_+)\right) := \{\psi \in \Hom_\C\left(V, W \otimes \Pol(\n_+)\right) :\text{\normalfont{ (\ref{F-system-1}) holds}}\},
\end{equation*}
\begin{equation}\label{F-system-1}
\psi \circ \sigma_\lambda(\ell) = \tau_{\nu}(\ell) \circ \Ad_\#(\ell)\psi \quad \forall \enspace \ell \in L^\prime,
\end{equation}
where the action $\Ad_\#$ of $L^\prime$ on $\Pol(\n_+)$ is given by
\begin{equation*}
\begin{aligned}
\Ad_\#(\ell): \Pol(\n_+) &\longrightarrow \Pol(\n_+)\\
\hspace{2.8cm} p(\cdot) &\longmapsto p\left(\Ad(\ell^{-1}) \cdot\right).
\end{aligned}
\end{equation*}
In addition, we set
\begin{equation*}
\Sol(\n_+; \sigma_\lambda, \tau_\nu) := \{\psi \in \Hom_{L^\prime}\left(V, W \otimes \Pol(\n_+)\right) :\text{\normalfont{(\ref{F-system-2}) holds}}\},
\end{equation*}
\begin{equation}\label{F-system-2}
\left(\widehat{d \pi_\mu}(C) \otimes \id_{W} \right)\psi = 0 \quad \forall \enspace C\in \n_+^\prime.
\end{equation}
Suppose that $\n_+$ is abelian. Then we have (see \cite[Sec. 4.3]{kob-pev1})
\begin{equation*}
\Diff_{G^\prime}\left(C^\infty(X, \V), C^\infty(Y, \W)\right) \subset \Diff^\text{const}(\n_+) \otimes \Hom_\C(V,W).
\end{equation*}
By applying the symbol map, we obtain the following.
\begin{thm}[{\cite[Thm. 4.1]{kob-pev1}}]\label{F-method-thm} Suppose that the Lie algebra $\n_+$ is abelian. Then, there exists a linear isomorphism
\begin{equation*}
\begin{tikzcd}[column sep = 1cm]
\Diff_{G^\prime}\left(C^\infty(X, \V), C^\infty(Y, \W)\right) \arrow[r, "\displaystyle{_{\Symb \otimes \id}}", "\thicksim"'] & \Sol(\n_+; \sigma_\lambda, \tau_\nu).
\end{tikzcd}
\end{equation*}
\end{thm}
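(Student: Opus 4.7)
The plan is to realize the claimed isomorphism as the composition of three elementary steps: a duality between $G^\prime$-equivariant differential operators and $(\g^\prime, P^\prime)$-equivariant maps between generalized Verma modules; a polynomial realization of these Verma modules via PBW and the algebraic Fourier transform; and a decomposition of the $\p^\prime$-equivariance condition into the two conditions (\ref{F-system-1}) and (\ref{F-system-2}) defining $\Sol(\n_+;\sigma_\lambda,\tau_\nu)$.

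Using the open Bruhat cell $\n_-(\R) \hookrightarrow G/P$, sections of $\V$ embed into $C^\infty(\n_-(\R), V)$, and similarly for $\W$. A linear map $D$ satisfying the support condition of Definition~\ref{def-diffop} is, after this restriction, a differential operator on the Bruhat cell valued in $\Hom_\C(V, W)$. The $G^\prime$-equivariance combined with the abelian hypothesis on $\n_+$ forces the coefficients to be constant, so
$$D \in \Diff^\text{const}(\n_-) \otimes \Hom_\C(V, W).$$
By Frobenius reciprocity applied to the distributional dual pairing (with the $\rho$-twist absorbed into $\sigma_\lambda^* = \sigma^\vee \boxtimes \C_{2\rho-\lambda}$), this space is canonically identified with
$$\Hom_{(\g^\prime, P^\prime)}\bigl(U(\g^\prime) \otimes_{U(\p^\prime)} W^\vee,\; U(\g) \otimes_{U(\p)} V^\vee\bigr).$$

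Since $\n_+$ is abelian, PBW gives $U(\g) \otimes_{U(\p)} V^\vee \simeq S(\n_-) \otimes V^\vee$, and the identification $\n_-^\vee \simeq \n_+$ turns $S(\n_-)$ into $\Pol(\n_+)$. A direct computation using formula (\ref{dpimu-general_expression}) shows that under this identification the transported $\g$-action on $\Pol(\n_+) \otimes V^\vee$ coincides with $\widehat{d\pi_\mu}$. Now any $(\g^\prime, P^\prime)$-homomorphism $\varphi$ out of $U(\g^\prime) \otimes_{U(\p^\prime)} W^\vee$ is determined by its restriction to the cyclic subspace $1 \otimes W^\vee$, and dualizing yields $\psi \in \Hom_\C(V, W \otimes \Pol(\n_+))$. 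Writing $\p^\prime = \l^\prime \oplus \n_+^\prime$, the equivariance splits: the $L^\prime$-part becomes exactly (\ref{F-system-1}), and the vanishing of the $\n_+^\prime$-action on $\varphi(1 \otimes W^\vee)$ becomes, after Fourier transform, exactly (\ref{F-system-2}). The composite $\Diff_{G^\prime}(\cdots) \to \Sol(\n_+;\sigma_\lambda,\tau_\nu)$ produced in this way is, by construction, $\Symb \otimes \id$.

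The main obstacle is the middle step: identifying the transported $\g$-action on $\Pol(\n_+) \otimes V^\vee$ with $\widehat{d\pi_\mu}$ as expressed through the maps $\alpha$ and $\beta$. This requires carefully unwinding the Gelfand--Naimark decomposition of $e^{tY}e^Z$, expanding the projections $p_\pm, p_0$ to first order in $t$, and verifying that the algebraic Fourier transform converts the differential operators arising from $d\pi_\mu$ into polynomial coefficients on $\n_+$ with the correct signs. Once this identification is secured, the duality of the first step and the decomposition in the third are essentially formal consequences of Frobenius reciprocity and the universal property of generalized Verma modules as $U(\n_-) \otimes V^\vee$.
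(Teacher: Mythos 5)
Your outline is correct and is essentially the argument of the cited reference: the paper itself offers no proof of Theorem \ref{F-method-thm} beyond quoting \cite[Thm.\ 4.1]{kob-pev1}, and your three steps (duality with generalized Verma modules, the algebraic Fourier transform realization $U(\g)\otimes_{U(\p)}V^\vee\simeq \Pol(\n_+)\otimes V^\vee$ carrying the action $\widehat{d\pi_\mu}$, and the splitting of $\p^\prime$-equivariance into (\ref{F-system-1}) and (\ref{F-system-2})) are exactly how Kobayashi--Pevzner prove it. You also correctly identify the genuinely technical point, namely matching the transported $\g$-action with $\widehat{d\pi_\mu}$ via the maps $\alpha$ and $\beta$ of (\ref{dpimu-general_expression}).
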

By Theorem \ref{F-method-thm} above, in order to determine $\Diff_{G^\prime}\left(C^\infty(X, \V), C^\infty(Y, \W)\right)$, it suffices to compute $\Sol(\n_+; \sigma_\lambda, \tau_\nu)$. To do so, one may proceed with the following two steps:
\begin{itemize}
\item \underline{Step 1}. Determine the generators of $\Hom_{L^\prime}\left(V, W \otimes \Pol(\n_+)\right)$.
\item \underline{Step 2}. Solve $\left(\widehat{d \pi_\mu}(C) \otimes \id_{W} \right)\psi = 0, \quad \forall \enspace C\in \n_+^\prime$.
\end{itemize}
In Sections \ref{section-step1} and \ref{section-step2} respectively, we shall carry out Steps 1 and 2 above for the pair $\left(G, G^\prime\right) = \left(SO_0(4,1), SO_0(3,1)\right)$.

\section{Principal series representations of $G = SO_0(4,1)$ and $G^\prime = SO_0(3,1)$}\label{section-setting}
In the previous section we presented the F-method in a general setting between two flag varieties $X=G/P$ and $Y=G^\prime/P^\prime$ with vector bundles $\V \rightarrow X$ and $Y\rightarrow \W$ over them. Through the rest of this paper, we restrict ourselves to the case $(G, G^\prime) = (SO_0(4,1), SO_0(3,1))$ and $(X,Y) = (S^3,S^2)$. In this section, we realize $G$ and $G^\prime$ in a geometric setting and define their principal series representations $\Ind_P^G(\sigma_\lambda^3)$ and $\Ind_{P^\prime}^{G^\prime}(\tau_{m, \nu})$ respectively. We use practically the same notation as in \cite{kkp}.
 
Let $Q_{4,1}$ be the following bilinear form on $\R^5$ of signature $(4,1)$.
\begin{equation*}
Q_{4,1}(x) := x_0^2 + x_1^2 + x_2^2 + x_3^2 - x_4^2, \quad \text{for } x = {}^t(x_0, \dots, x_4) \in \R,
\end{equation*}
and realize the de Sitter group $O(4,1)$ as follows:
\begin{equation*}
O(4,1) = \{g \in GL(5,\R) : Q_{4,1}(gx) = Q_{4,1}(x) \enspace \forall x \in \R^5\}.
\end{equation*}
Let $\g(\R) := \so(4,1)$ be its Lie algebra and denote by $E_{p,q} \enspace (1 \leq p, q \leq 5)$ the elementary $5\times 5$ matrix consisting of a $1$ in the $(p,q)$ position and zeros in the others. We define the following elements of $\g(\R)$:
\begin{equation}\label{elements}
\begin{aligned}
H_0 & := E_{1,5} + E_{5,1},\\
C_j^+ & := -E_{1,j+1} + E_{j+1,1} - E_{5,j+1} - E_{j+1,5} & (1 \leq j  \leq 3),\\
C_j^- & := -E_{1,j+1} + E_{j+1,1} + E_{5,j+1} + E_{j+1,5} & (1 \leq j  \leq 3),\\
X_{p,q} & := E_{q+1, p+1} - E_{p+1, q+1} & (1 \leq p \leq q \leq 3).
\end{aligned}
\end{equation}
Note that the indexing notation is slightly different from that of \cite[(2.1)]{kkp}, where the first row (column) of a matrix has index 0, while here it has index 1. 

A simple computation shows the following relations:
\begin{equation}\label{brackets-Cpm}
\frac{1}{2}[C_j^+, C_k^-] = X_{j, k} -\delta_{j,k}H_0, \quad X_{p,q} = -X_{q,p}.
\end{equation}
Observe that the sets $\{C_j^+: 1 \leq j  \leq 3\}$, $\{C_j^-: 1 \leq j  \leq 3\}$ and $\{X_{p,q} : 1 \leq p,q \leq 3\}\cup\{H_0\}$ determine bases of the following Lie subalgebras of $\g(\R)$:
\begin{equation*}
\begin{aligned}
\n_+(\R) & := \ker(\ad(H_0) - \id)\subset \g(\R),\\
\n_-(\R) & := \ker(\ad(H_0) + \id)\subset \g(\R),\\
\l(\R) & := \ker\ad(H_0)\subset \g(\R).
\end{aligned}
\end{equation*}

Let $\Xi$ be the isotropic cone defined by $Q_{4,1}$:
\begin{equation*}
\Xi := \{x \in \R^5\setminus\{0\} : Q_{4,1}(x) = 0\} \subset \R^5,
\end{equation*}
which is clearly invariant under the dilatation of the multiplicative group $\R^\times = \R\setminus \{0\}$.
Observe that the projection $p: \Xi \rightarrow S^3, \enspace x \mapsto p(x) := \frac{1}{x_4}{}^t(x_0, x_1, x_2, x_3)$ determines a bijection $\widetilde{p}: \Xi/\R^\times \arrowsimeq S^3$.

Note that the natural action of $O(4,1)$ on $\R^5$ leaves $\Xi$ and $\Xi/\R^\times$ invariant. Now let $G := SO_0(4,1)$ be the identity component of $O(4,1)$ and define 
\begin{equation*}
P := \stab_G(1:0:0:0:1).
\end{equation*}
This is a parabolic subgroup of $G$ with Langlands decomposition $P = LN_+ = MAN_+$, where
\begin{equation*}
\begin{gathered}
M = \left\{
\begin{pmatrix}
1 & &\\
& g &\\
& & 1
\end{pmatrix} : g \in SO(3)
\right\} \simeq SO(3),\\[6pt]
A = \exp(\R H_0) \simeq SO_0(1,1), \quad
N_+ = \exp(\n_+(\R)) \simeq \R^3.
\end{gathered}
\end{equation*}
The action of $G$ on $\Xi/\R^\times$ is transitive; therefore, we obtain bijections
\begin{equation*}
G/P \arrowsimeq \Xi/\R^\times \arrowsimeq S^3.
\end{equation*}

Moreover, for $x = {}^t(x_1, x_2, x_3) \in \R^3$ let $Q_3(x) \equiv \|x\|_{\R^3} := x_1^2 + x_2^2 + x_3^2$ and let $N_- := \exp(\n_-(\R))$. We define a diffeomorphism $n_-: \R^3 \rightarrow N_-$ by
\begin{equation}
\label{open-Bruhat-cell}
\begin{gathered}
n_-(x) := \exp\left(\sum_{j = 1}^3 x_jC_j^-\right) = 
\begin{pmatrix}
1 - \frac{1}{2}Q_3(x) & -x_1 & -x_2 & -x_3 & -\frac{1}{2}Q_3(x)\\
x_1 & 1 & 0 & 0 & x_1\\
x_2 & 0 & 1 & 0 & x_2\\
x_3 & 0 & 0 & 1 & x_3\\
\frac{1}{2}Q_3(x) & x_1 & x_2 & x_3 & 1 + \frac{1}{2}Q_3(x)
\end{pmatrix}.
\end{gathered}
\end{equation}
Then, this map gives the coordinates of the open Bruhat cell:
\begin{equation*}
\n_-(\R) \hookrightarrow G/P \simeq S^3, \enspace Z \mapsto \exp(Z) \cdot P.
\end{equation*}
In fact, if we define
\begin{equation*}
\iota: \R^3 \hookrightarrow S^3, \quad {}^t\!(x_1, x_2, x_3) \mapsto \frac{1}{1+Q_3(x)}{}^t\!(1-Q_3(x), 2x_1, 2x_2, 2x_3),
\end{equation*}
and note that
\begin{equation*}
n_-(x){}^t\!(1, 0, 0, 0, 1) = {}^t\!(1-Q_3(x), 2x_1, 2x_2, 2x_3, 1+Q_3(x)),
\end{equation*}
then we have $\iota(x) = \widetilde{p}\left[n_-(x)^t(1,0,0,0,1)\right]$. We observe that $\iota$ is nothing but the inverse of the stereographic projection
\begin{equation*}
S^3\setminus\{{}^t(-1,0,0,0,0)\} \rightarrow \R^3, \quad {}^t(\omega_0, \omega_1, \omega_2, \omega_3) \mapsto \frac{1}{1+\omega_0}{}^t(\omega_1, \omega_2, \omega_3).
\end{equation*}

For any $\lambda \in \C$ let $(\sigma^3_\lambda, V_\lambda^3)$ be the following $3$-dimensional representation of $L \equiv MA$
\begin{equation*}
\begin{aligned}
\sigma^3_\lambda: L = M \times A &\longrightarrow GL_\C(V^3)\\
\hspace{-0.2cm}(B, \exp(t H_0)) &\longmapsto e^{\lambda t}\sigma^3(B),
\end{aligned}
\end{equation*}
where $\sigma^{2N+1}$ denotes the $(2N+1)$-dimensional irreducible representation of $SO(3)$ realized in $V^{2N+1}$; the space of homogeneous polynomials of degree $2N$ in two variables (see Section \ref{section-fdrepsofSO(3)}). Note that as a representation of $SO(3)$, $\sigma^3$ is isomorphic to the natural representation on $\C^3$. We observe that this representation can be written as $\sigma^3 \boxtimes \C_\lambda$, where $\C_\lambda$ is the character of $A$ defined by exponentiation:
\begin{equation*}
\C_\lambda: A \rightarrow \C^\times, \quad a = e^{tH_0} \mapsto a^\lambda := e^{\lambda t}.
\end{equation*}

By letting $N_+$ act trivially, we think of $\sigma_\lambda^3$ as a representation of $P$ and define the following homogeneous vector bundle over $G/P\simeq S^3$ associated to $(\sigma_\lambda^3, V_\lambda^3):$
\begin{equation}\label{vectorbundle-V}
\V_\lambda^3 := G \times_P V_\lambda^3.
\end{equation}
Then, we form the principal series representation $\Ind_P^G(\sigma_\lambda^3)$ of $G$ on the space of smooth sections $C^\infty(G/P, \V_\lambda^3)$, or equivalently, on the space
\begin{equation*}
\begin{aligned}
C^\infty(G, V_\lambda^3)^P =
\{f \in C^\infty(G, V_\lambda^3) : f(gman) = \sigma^3(m)^{-1}& a^{-\lambda}f(g) \\
& \text{for all } m\in M, a\in A, n\in N_+, g\in G\}.
\end{aligned}
\end{equation*}
Its flat picture ($N$-picture) is defined on $C^\infty(\R^3, V^3)$ via the restriction to the open Bruhat cell (\ref{open-Bruhat-cell}):
\begin{equation*}
\begin{aligned}
C^\infty(G/P, \V^3_\lambda) \simeq C^\infty(G, V_\lambda^3)^P &\longrightarrow C^\infty(\R^3, V^3)\\
\hspace{5.2cm}f &\longmapsto F(x) := f(n_-(x)).
\end{aligned}
\end{equation*}

Now, we realize $G^\prime = SO_0(3,1)$ as $G^\prime \simeq \stab_G{}^t(0,0,0,1,0) \subset G$, and denote its Lie algebra by $\g^\prime(\R)$. The action of this subgroup leaves invariant $\Xi\cap\{x_3 = 0\}$, thus it acts transitively on
\begin{equation*}
S^2 = \{(y_0, y_1, y_2, y_3) \in S^3 : y_3 = 0\} \simeq \left(\Xi\cap\{x_3 = 0\}\right)/\R^\times.
\end{equation*}
Then, $P^\prime := P \cap G^\prime$ is a parabolic subgroup of $G^\prime$ with Langlands decomposition $P^\prime = L^\prime N_+^\prime = M^\prime A N_+^\prime$, where
\begin{equation}\label{realization-Mprime}
\begin{gathered}
M^\prime = M\cap G^\prime =\left\{
\begin{pmatrix}
1 & & &\\
& B & &\\
& & 1 &\\
& & & 1
\end{pmatrix} : B\in SO(2) \right\} \simeq SO(2),\\[6pt]
N^\prime_+ =  N_+ \cap G^\prime = \exp(\n_+^\prime(\R)) \simeq \R^2.
\end{gathered}
\end{equation}
Here, the Lie algebras $\n_\pm^\prime(\R)$ stand for $\n_\pm^\prime(\R) := \n_\pm(\R)\cap \g^\prime$. In particular, the sets $\{C_j^+: 1 \leq j  \leq 2\}$ and $\{C_j^-: 1 \leq j  \leq 2\}$ 
determine bases of $\n_+^\prime(\R)$ and $\n_-^\prime(\R)$ respectively.
 
For any $\nu \in \C$ and any $m \in \Z$, let $(\tau_{m, \nu}, \C_{m, \nu})$ be the following one-dimensional representation of $L^\prime \equiv M^\prime A$
\begin{equation*}
\begin{gathered}
\tau_{m, \nu}: L^\prime = M^\prime \times A \longrightarrow \C^\times\\
\hspace{1.4cm}(
\begin{pmatrix}
\cos s & -\sin s\\
 \sin s & \cos s
\end{pmatrix}
, \exp(t H_0)) \longmapsto e^{ims}e^{\nu t}.
\end{gathered}
\end{equation*}
As before, by letting $N_+^\prime$ act trivially, we think of $\tau_{m, \nu}$ as a representation of $P^\prime$ and define the following homogeneous vector bundle over $G^\prime/P^\prime\simeq S^2$ associated to $(\tau_{m,\nu}, \C_{m,\nu}):$
\begin{equation}\label{vectorbundle-L}
\L_{m,\nu} := G^\prime \times_{P^\prime} \C_{m,\nu}.
\end{equation}
We form the principal series representation $\Ind_{P^\prime}^{G^\prime}(\tau_{m,\nu})$ of $G^\prime$ on the space $C^\infty(G^\prime/P^\prime, \L_{m,\nu})$ of smooth sections for $\L_{m,\nu}$ .

It follows from Theorem \ref{F-method-thm}, that the symbol map $\Symb$ gives a linear isomorphism
\begin{equation}\label{isomorphism-Fmethod-concrete}
\begin{tikzcd}[column sep = 1cm]
\Diff_{G^\prime}\left(C^\infty(S^3, \V_\lambda^3), C^\infty(S^2, \L_{m,\nu})\right) \arrow[r, "\displaystyle{_{\Symb \otimes \id}}", "\thicksim"'] & \Sol(\n_+; \sigma_\lambda^3, \tau_{m, \nu}),
\end{tikzcd}
\end{equation}
where
\begin{equation}\label{F-system}
\Sol(\n_+; \sigma_\lambda^3, \tau_{m, \nu}) := \{\psi \in \Hom_\C\left(V_\lambda^3, \C_{m,\nu} \otimes \Pol(\n_+)\right) :\text{(\ref{F-system-1-concrete}), (\ref{F-system-2-concrete}) hold}\},
\end{equation}
\begin{equation}\label{F-system-1-concrete}
\psi \circ \sigma_\lambda^3(\ell) = \tau_{m, \nu}(\ell) \circ \Ad_\#(\ell)\psi, \quad \forall \enspace \ell \in L^\prime=M^\prime A,
\end{equation}
\begin{equation}\label{F-system-2-concrete}
\left(\widehat{d \pi_\mu}(C) \otimes \id_{\C_{m,\nu}} \right)\psi = 0, \quad \forall \enspace C\in \n_+^\prime.
\end{equation}
From the next section we consider Steps 1 and 2 of the F-method to determine $\Sol(\n_+; \sigma_\lambda^3, \tau_{m, \nu})$.

\section{Step 1: Generators of $\Hom_{L^\prime}\left(V_\lambda^3, \C_{m,\nu} \otimes \Pol(\n_+)\right)$}\label{section-step1}
In this section we tackle the first step of the F-method, in which we use finite-dimensional representation theory to determine the generators of the space
\begin{equation}\label{space-step1}
 \Hom_{L^\prime}\left(V_\lambda^3, \C_{m,\nu} \otimes \Pol(\n_+)\right).
\end{equation}
The second step, devoted to solving the differential equation (\ref{F-system-2-concrete}), is summarized in Section \ref{section-step2}.

In the following, we identify $\Pol(\n_+)$ with $\Pol[\zeta_1,\zeta_2, \zeta_3]$, where $(\zeta_1, \zeta_2, \zeta_3)$ are the coordinates of $\n_+$ with respect to the basis $\{C_j^+: 1\leq j\leq 3\}$. A direct computation shows that, via the realizations (\ref{realization-Mprime}), the action $\Ad_\#$ of $L^\prime = SO(2) \times A$ on $\Pol(\n_+)$ is equivalent to the following standard action:
\begin{equation}\label{action-SO(2)-on-Pol}
\begin{aligned}
SO(2) \times A \times \Pol(\zeta_1,\zeta_2, \zeta_3) & \longrightarrow \Pol(\zeta_1,\zeta_2, \zeta_3)\\
\enspace (B, \exp(tH_0), p(\cdot))& \longmapsto p\left(e^{-t}\begin{pmatrix}
B &\\
& 1
\end{pmatrix}^{-1}\cdot\right).
\end{aligned}
\end{equation}

\subsection{Finite-dimensional irreducible representations of SO(3)}\label{section-fdrepsofSO(3)}
Here we recall a well-known result about the classification of the finite-dimensional irreducible representations of $SO(3)$ (see, for example \cite{hall}, \cite{knapp}).

It is known that any finite-dimensional irreducible representation of $SU(2)$ is parametrized by a non-negative integer $k \in \N$ and can be realized in the vector space of homogeneous polynomials of degree $k$ in two variables as follows:
\begin{equation*}
\begin{gathered}
\theta^k: SU(2) \longrightarrow GL_\C\left(V^{k+1}\right), \quad
g \longmapsto \left(p(\cdot) \mapsto p\left(g^{-1} \cdot\right)\right),
\end{gathered}
\end{equation*}
where $V^{k+1} := \{p \in \C[\xi_1, \xi_2]: \deg(p) = 2k\}$.

Let $\varpi$ be the $2:1$ covering map of $SO(3)$:
\begin{equation*}
\varpi: SU(2) \twoheadrightarrow SO(3),
\end{equation*}
which has kernel $\ker \varpi = \{\pm I_2\}$ and can be written explicitly as follows (cf. \cite[Prop. 1.19]{hall}):
\begin{equation}\label{expression-covering}
\begin{gathered}
\varpi(U(x,y)) = \begin{pmatrix}
\Re(x^2-y^2) & \Im(x^2+y^2) & -2\Re(xy)\\[3pt]
-\Im(x^2-y^2) & \Re(x^2+y^2) & 2\Im(xy)\\[3pt]
2\Re(x\overline{y}) & 2\Im(x\overline{y}) & |x|^2-|y|^2
\end{pmatrix},
\end{gathered}
\end{equation}
for $U(x,y) := \begin{pmatrix}
x & y\\
-\overline{y} &\overline{x}
\end{pmatrix} \in SU(2)$.

We denote by $\widetilde{\varpi}$ the associated isomorphism $\widetilde{\varpi}: SU(2)/\{\pm I_2\} \arrowsimeq SO(3)$.
Then, it can be proved that the representation $\theta^k$ factors through $\varpi$ as a representation of $SO(3)$ if and only if $k = 2N$ (for some $N \in \N$). In other words, the following holds:
\begin{equation*}
\theta^k(\ker \varpi) = \{\id_{V^k}\} \Leftrightarrow k \in 2\N.
\end{equation*}
Thus, any finite-dimensional representation of $SO(3)$ is of the form
\begin{equation*}
\begin{gathered}
\sigma^{2N+1}: SO(3) \longrightarrow GL_\C\left(V^{2N+1}\right), \quad
B \longmapsto \left(p(\cdot) \mapsto p\left(\left(\widetilde{\varpi}^{-1}(B)\right)^{-1} \cdot\right)\right).
\end{gathered}
\end{equation*}
For $N = 1$, we consider the following basis of $V^3$
\begin{equation}\label{basis-V3}
B(V^3) := \{u_1, u_2, u_3\} = \{\xi_1^2, \xi_1\xi_2, \xi_2^2\}.
\end{equation}

\subsection{Description of $ \Hom_{L^\prime}\left(V_\lambda^3, \C_{m,\nu} \otimes \Pol(\n_+)\right)$}
In this subsection we give a complete characterization of the space (\ref{space-step1}). As in (\ref{realization-Mprime}), we realize $SO(2)$ in $SO(3)$ as
\begin{equation}\label{realization-SO(2)inSO(3)}
SO(2) \simeq \left\{\begin{pmatrix}
B &\\
&1
\end{pmatrix}: B\in SO(2)\right\} \subset SO(3).
\end{equation}
We start from a simple but useful result about harmonic polynomials.

For $k, \ell \in \N$ we set 
$$\Harm^k(\C^\ell) := \{h\in \Pol^k[\zeta_1, \dots, \zeta_\ell] : \Delta h = 0\},$$ that is, the space of harmonic polynomials on $\C^\ell$ of degree $k$. When $\ell = 2$, this space has dimension at most $2$ and can be expressed as follows (see, for example \cite[Thm. 3.1]{helgason}):
\begin{equation}\label{decomposition-Harm-2}
\Harm^k(\C^2) = 
\begin{cases}
\C, & \text{if } k = 0,\\
\C(\zeta_1 - i\zeta_2)^k + \C(\zeta_1 + i\zeta_2)^k, &\text{if } k \geq 0.
\end{cases}
\end{equation}

\begin{lemma}\label{lemma-Harm} Let $k \in \N$ and $m \in \Z\setminus\{0\}$. Then, the following three conditions on the pair $(k,m)$ are equivalent.
\begin{enumerate}[label=\normalfont{(\roman*)}, topsep=0pt]
\item $\Hom_{SO(2)}\left(V^3, \C_m\otimes \Harm^k(\C^2)\right) \neq \{0\}$.
\item $\dim_\C \Hom_{SO(2)}\left(V^3, \C_m\otimes \Harm^k(\C^2)\right) = 1$.
\item $k \in K(m) := \{|m-1|, |m|, |m+1|\}$.
\end{enumerate}
Moreover, if one of the above (therefore all) conditions is satisfied, the generators $h_k^\pm$ of the space $\Hom_{SO(2)}\left(V^3, \C_m\otimes \Harm^k(\C^2)\right)$ are given in \text{{\normalfont{(\ref{generators-Hom(V3,CmHk)})}}} below.
\end{lemma}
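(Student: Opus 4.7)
The plan is to reduce the entire statement to a weight-space count. Since $SO(2)$ is abelian, every finite-dimensional $SO(2)$-module decomposes as a direct sum of characters $\C_j$ ($j \in \Z$), and by Schur's lemma $\dim_\C \Hom_{SO(2)}(\C_j, \C_{j'}) = \delta_{j,j'}$. Thus the dimension we want is simply the number of common weights between $V^3$ and $\C_m \otimes \Harm^k(\C^2)$, counted with multiplicity on each side. I would carry this out in three steps: decompose $V^3$, decompose $\C_m \otimes \Harm^k(\C^2)$, and then match weights.

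For the first step, I would use the explicit covering map (\ref{expression-covering}) to identify the realization (\ref{realization-SO(2)inSO(3)}) of $SO(2)$ inside $SO(3)$ with the image of the diagonal maximal torus of $SU(2)$; evaluating (\ref{expression-covering}) at $(x,y) = (e^{i\theta/2}, 0)$ confirms this. Plugging the lift $\diag(e^{i\theta/2}, e^{-i\theta/2})$ into the definition of $\sigma^3$ applied to the basis (\ref{basis-V3}), the three basis vectors $\xi_1^2$, $\xi_1\xi_2$, $\xi_2^2$ pick up the factors $e^{-i\theta}$, $1$, $e^{+i\theta}$, so
\[
V^3 \simeq \C_{-1} \oplus \C_0 \oplus \C_{+1}
\]
as $SO(2)$-modules. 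For the second step, (\ref{decomposition-Harm-2}) is already a weight decomposition once one computes the $SO(2)$-weights of $(\zeta_1 \mp i\zeta_2)^k$ with respect to the action (\ref{action-SO(2)-on-Pol}); a direct substitution shows these weights are $\pm k$. Consequently, for $k \geq 1$,
\[
\C_m \otimes \Harm^k(\C^2) \simeq \C_{m-k} \oplus \C_{m+k},
\]
while $\C_m \otimes \Harm^0(\C^2) \simeq \C_m$.

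Combining the two decompositions yields
\[
\dim_\C \Hom_{SO(2)}\!\left(V^3, \C_m \otimes \Harm^k(\C^2)\right) = \#\!\left(\{-1,0,+1\} \cap \{m-k, m+k\}\right)
\]
(with the obvious modification for $k=0$), and a direct inspection shows this set is non-empty precisely when $k \in \{|m-1|, |m|, |m+1|\} = K(m)$, which gives (i) $\Leftrightarrow$ (iii). To upgrade (i) to (ii), I would observe that the intersection could only contain two elements if $\{m-k, m+k\}\subset\{-1,0,+1\}$; but then $2k = (m+k)-(m-k) \leq 2$, forcing $k \leq 1$, and a short case check then rules this out under the standing hypothesis $m \neq 0$. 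Hence the Hom space has dimension at most one.

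The explicit generators $h_k^\pm$ would then arise by sending the (unique) weight vector of $V^3$ whose weight equals $m \mp k$ to a nonzero scalar multiple of $(\zeta_1 \pm i\zeta_2)^k$ and annihilating the other two basis vectors; the $SO(2)$-equivariance forces this up to a constant, so only a normalization remains to be fixed. I do not expect a serious obstacle: the whole argument is a routine weight calculation, and the only real care needed is to keep sign conventions consistent between the covering $\varpi$, the $p(\cdot) \mapsto p(\ell^{-1}\cdot)$ convention in (\ref{action-SO(2)-on-Pol}), and the parametrization of $SO(2)$ fixed by (\ref{realization-SO(2)inSO(3)}).
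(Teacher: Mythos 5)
Your argument is correct and coincides with the paper's own proof: both decompose $V^3$ into $SO(2)$-weights $\{-1,0,1\}$ via the covering map $\varpi$, read off the weights $\pm k$ of $(\zeta_1 \mp i\zeta_2)^k$ from (\ref{decomposition-Harm-2}), and obtain the Hom space by matching weights (equivalently, taking $SO(2)$-invariants of $(V^3)^\vee \otimes \C_m \otimes \Harm^k(\C^2)$). Your explicit check that the dimension cannot exceed one is a point the paper leaves implicit, and your caveat about sign conventions is the only place where care is needed to recover the precise pairing of $u_j$ with $(\zeta_1\pm i\zeta_2)^k$ in (\ref{generators-Hom(V3,CmHk)}).
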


\begin{equation}\label{generators-Hom(V3,CmHk)}
\begin{array}{l l}
h_{m-1}^+: V^3 \longrightarrow \C_m \otimes \Harm^{m-1}(\C^2), &\enspace
u_1 \longmapsto 1 \otimes(\zeta_1 + i\zeta_2)^{m-1},\\[7pt]
h_{m}^+: V^3 \longrightarrow \C_m \otimes \Harm^{m}(\C^2), &\enspace
u_2 \longmapsto 1 \otimes (\zeta_1 + i\zeta_2)^{m},\\[7pt]
h_{m+1}^+: V^3 \longrightarrow \C_m \otimes \Harm^{m+1}(\C^2), &\enspace
u_3 \longmapsto 1 \otimes (\zeta_1 + i\zeta_2)^{m+1},\\[7pt]
h_{-m+1}^-: V^3 \longrightarrow \C_m \otimes \Harm^{-m+1}(\C^2), &\enspace
u_3 \longmapsto 1 \otimes(\zeta_1 - i\zeta_2)^{-m+1},\\[7pt]
h_{-m}^-: V^3 \longrightarrow \C_m \otimes \Harm^{-m}(\C^2), &\enspace
u_2 \longmapsto 1 \otimes (\zeta_1 - i\zeta_2)^{-m},\\[7pt]
h_{-m-1}^-: V^3 \longrightarrow \C_m \otimes \Harm^{-m-1}(\C^2), &\enspace
u_1 \longmapsto 1 \otimes (\zeta_1 - i\zeta_2)^{-m-1}.
\end{array}
\end{equation}
The sign index $\pm$ in $h_k^\pm$ is chosen to coincide with that of $m$, being $+$ if $m \geq 1$ and $-$ if $m\leq -1$.
\begin{proof}
We set $g := \begin{pmatrix}
\cos t & -\sin t & 0\\
\sin t & \cos t & 0\\
0 & 0 & 1 
\end{pmatrix} \in SO(2)\hookrightarrow SO(3)$ (see (\ref{realization-SO(2)inSO(3)})). It follows from (\ref{expression-covering}) that $\varpi(U(e^{-it/2},0)) = g$. Thus, for $p(\xi_1, \xi_2) = \xi_1^{2-j}\xi_2^j$ $(j = 0,1,2)$, we have 
\begin{equation*}
\sigma^3(g)p(\xi_1, \xi_2) = e^{i(1-j)t}p(\xi_1, \xi_2).
\end{equation*}
Hence,
\begin{equation*}
V^3 = \C u_1 \oplus \C u_2 \oplus \C u_3 \simeq \C_{1} \oplus \C_0 \oplus \C_{-1} \quad \text{ as } SO(2)\text{-modules.}
\end{equation*}

On the other hand, the decomposition (\ref{decomposition-Harm-2}) is precisely the decomposition of $\Harm^k(\C^2)$ as an $SO(2)$-module:
\begin{equation*}
\C(\zeta_1 - i\zeta_2)^k + \C(\zeta_1 + i\zeta_2)^k \simeq \C_{k} + \C_{-k}.
\end{equation*}
Thus, we have
\begin{equation*}
\C_m \otimes \Harm^k(\C^2) \simeq \begin{cases}
\C_m, &\text{if } k = 0,\\
\C_{m+k} \oplus \C_{m-k},&\text{if } k > 0.
\end{cases}
\end{equation*}

Now, the lemma follows by taking $SO(2)$-invariants of the tensor product $(V^3)^\vee \otimes \C_m \otimes \Harm^k(\C^2)$. The maps $h_k^\pm$ can be obtained by looking at the factors that do not vanish in the decompositions above.
\end{proof}

Given $b\in \Z$ and $g \in \C[t]$, we define a meromorphic function $(T_bg)(\zeta)$ of three variables $\zeta = (\zeta_1, \zeta_2,\zeta_3)$ as follows (cf. {\cite[(4.4)]{kob-pev1}}):
\begin{equation}\label{Def-T_b}
(T_bg)(\zeta) := (\zeta_1^2+\zeta_2^2)^\frac{b}{2} g\left(\frac{\zeta_3}{\sqrt{\zeta_1^2+\zeta_2^2}}\right).
\end{equation}

We observe that $T_bg$ is a homogeneous polynomial on three variables of degree $b$ if $b \in \N$ and $g \in \Pol_b[t]_\text{{\normalfont{even}}}$, where
\begin{equation}\label{def-Pol_even}
\Pol_b[t]_\text{{\normalfont{even}}}:= \spanned_\C\left\{t^{b-2j}: j = 0, \dots, \left[\frac{b}{2}\right]\right\}.
\end{equation}
Moreover, we obtain the following bijection:
\begin{equation}\label{bijection-T_b}
T_b: \Pol_b[t]_\text{{\normalfont{even}}} \arrowsimeq \bigoplus_{2b_1+b_2 = b} \Pol^{b_1}[\zeta_1^2 + \zeta_2^2]\otimes\Pol^{b_2}[\zeta_3].
\end{equation}

We recall from \cite{kkp} the following useful lemma. 

\begin{lemma}[{\cite[Lem. 4.2]{kkp}}]\label{Harm-Pol-isomorphism} For any $a \in \N$, we have the following bijection.
\begin{equation*}
\begin{aligned}
\bigoplus_{k = 0}^a\Pol_{a-k}[t]_\text{{\normalfont{even}}} \otimes \Hom_{SO(2)}\left(V^3, \C_m\otimes \Harm^k(\C^2)\right) &\arrowsimeq \Hom_{SO(2)}\left(V^3, \C_m \otimes \Pol^a[\zeta_1, \zeta_2, \zeta_3]\right),\\
\sum_{k=0}^a g_k \otimes H_k &\longmapsto \sum_{k = 0}^a \left(T_{a-k}g_k\right)H_k.
\end{aligned}
\end{equation*}
\end{lemma}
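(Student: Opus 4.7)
The plan is to identify $\Pol^a[\zeta_1,\zeta_2,\zeta_3]$ as an $SO(2)$-module in a form that isolates the harmonic factor in the variables $(\zeta_1,\zeta_2)$ and makes the map $T_b$ visible, and then apply $\Hom_{SO(2)}(V^3,\C_m\otimes\cdot)$ to the resulting decomposition.

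First I would note that under the $SO(2)$-action (\ref{action-SO(2)-on-Pol}) the variable $\zeta_3$ is invariant, so as $SO(2)$-modules
\begin{equation*}
\Pol^a[\zeta_1,\zeta_2,\zeta_3] = \bigoplus_{b_2=0}^{a} \Pol^{b_2}[\zeta_3]\otimes\Pol^{a-b_2}[\zeta_1,\zeta_2].
\end{equation*}
Next I would apply the classical Fischer (harmonic) decomposition
\begin{equation*}
\Pol^{b}[\zeta_1,\zeta_2] = \bigoplus_{\substack{0\leq k\leq b\\ k\equiv b\,(\mathrm{mod}\,2)}} (\zeta_1^2+\zeta_2^2)^{(b-k)/2}\,\Harm^k(\C^2),
\end{equation*}
which is $SO(2)$-equivariant because $\zeta_1^2+\zeta_2^2$ is invariant and harmonicity is preserved. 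Reindexing by the harmonic degree $k$ (setting $b_1=(b-k)/2$, so that $2b_1+b_2=a-k$) rearranges the total decomposition into
\begin{equation*}
\Pol^a[\zeta_1,\zeta_2,\zeta_3] = \bigoplus_{k=0}^{a}\left(\bigoplus_{2b_1+b_2=a-k}\Pol^{b_1}[\zeta_1^2+\zeta_2^2]\otimes\Pol^{b_2}[\zeta_3]\right)\otimes\Harm^k(\C^2),
\end{equation*}
where the bracketed factor consists of $SO(2)$-invariant polynomials.

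At this point I would invoke the bijection (\ref{bijection-T_b}), by which $T_{a-k}$ identifies $\Pol_{a-k}[t]_\text{{\normalfont{even}}}$ with exactly that bracketed subspace. Hence, as $SO(2)$-modules with the first factor carrying the trivial action,
\begin{equation*}
\Pol^a[\zeta_1,\zeta_2,\zeta_3] \simeq \bigoplus_{k=0}^{a}\Pol_{a-k}[t]_\text{{\normalfont{even}}}\otimes\Harm^k(\C^2).
\end{equation*}
Applying the exact functor $\Hom_{SO(2)}(V^3,\C_m\otimes\cdot)$, which commutes with direct sums and pulls out trivial tensor factors, yields the claimed linear isomorphism, and chasing the identifications confirms that $g_k\otimes H_k$ is sent to $(T_{a-k}g_k)H_k$.

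The only real obstacle is the bookkeeping of the reindexing $b_1=(a-b_2-k)/2$ and the verification that $T_{a-k}$ surjects onto the corresponding $SO(2)$-invariant subspace; both reduce immediately to (\ref{bijection-T_b}) together with the elementary fact that every $SO(2)$-invariant homogeneous polynomial on $\C^3$ (with $SO(2)$ acting only on the first two coordinates) is a polynomial in $\zeta_1^2+\zeta_2^2$ and $\zeta_3$. No deeper ingredient is required beyond the Fischer decomposition and (\ref{bijection-T_b}).
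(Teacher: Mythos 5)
Your proof is correct and follows essentially the same route as the paper, which derives the lemma by combining the bijection (\ref{bijection-T_b}) with the harmonic (Fischer) decomposition $\Pol[\zeta_1^2+\cdots+\zeta_\ell^2]\otimes\Harm(\C^\ell)\simeq\Pol[\zeta_1,\ldots,\zeta_\ell]$ applied in the variables $(\zeta_1,\zeta_2)$; you have simply written out the reindexing and the passage to $\Hom_{SO(2)}(V^3,\C_m\otimes\cdot)$ that the paper leaves implicit.
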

This result comes from combining (\ref{bijection-T_b}) with the natural decomposition
\begin{equation*}
\begin{gathered}
\Pol[\zeta_1^2 + \cdots + \zeta_\ell^2] \otimes \Harm(\C^\ell) \arrowsimeq \Pol[\zeta_1, \ldots, \zeta_\ell],
\end{gathered}
\end{equation*}
where $\Harm(\C^\ell) := \bigoplus_{k=0}^\infty \Harm^k(\C^\ell)$.
\begin{prop}\label{Firststep-prop} Let $\lambda, \nu \in \C$, $m \in \Z\setminus\{0\}$ and $a \in \N$. Then, the following two conditions on the tuple $(\lambda, \nu, m, a)$ are equivalent.
\begin{enumerate}[label=\normalfont{(\roman*)}, topsep=0pt]
\item $\Hom_{L^\prime}\left(V^3_\lambda, \C_{m, \nu} \otimes \Pol^a(\n_+)\right) \neq \{0\}$.
\item $a = \nu - \lambda$ and $a \geq \min K(m)$.
\end{enumerate}
Moreover, if {\normalfont{(ii)}} is satisfied, the space $\Hom_{L^\prime}\left(V^3_\lambda, \C_{m, \nu} \otimes \Pol^a(\n_+)\right)$ is equal to
\begin{equation*}
\begin{gathered}
\spanned_\C\{\left(T_{a-k}g_k\right)h_k^{\eta(m)}: g_k \in \Pol_{a-k}[t]_\text{{\normalfont{even}}}, k\in K(m)\},
\end{gathered}
\end{equation*}
where $\eta(m) = +$ \text{{\normalfont{(if $m \geq 1$)}}} and \text{{\normalfont{$\eta(m) = -$ (if $m \leq -1$)}}}. Here, we regard $\Pol_{d}[t]_\text{{\normalfont{even}}} = \{0\}$ for $d < 0$.
\end{prop}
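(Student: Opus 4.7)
The plan is to use the Langlands decomposition $L' = M'A$ to split condition (\ref{F-system-1-concrete}) into two separate equivariance conditions, one for $A$ and one for $M' = SO(2)$, and then apply Lemmas \ref{lemma-Harm} and \ref{Harm-Pol-isomorphism} in succession.

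First, I would handle the $A$-equivariance. By (\ref{action-SO(2)-on-Pol}), the element $\exp(tH_0) \in A$ acts on $\Pol^a(\n_+)$ via scalar multiplication by $e^{-at}$, and on $\C_{m,\nu}$ by the character $e^{\nu t}$; on the other hand, $A$ acts on $V_\lambda^3$ trivially through $\sigma^3$ and by $e^{\lambda t}$ through the twist. Equation (\ref{F-system-1-concrete}) restricted to $A$ then reads $e^{\lambda t}\psi = e^{(\nu-a)t}\psi$ for all $t \in \R$. Hence any nonzero $\psi$ forces $a = \nu - \lambda$; conversely, once this degree equation is satisfied, $A$-equivariance is automatic. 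This gives the first half of condition (ii).

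Assume now $a = \nu - \lambda$. Because $A$ acts trivially on the scalar part once the degree is fixed, the remaining constraint is $M'$-equivariance, and we may identify
\[
\Hom_{L'}\!\left(V^3_\lambda, \C_{m,\nu}\otimes \Pol^a(\n_+)\right) \;=\; \Hom_{SO(2)}\!\left(V^3, \C_m \otimes \Pol^a[\zeta_1,\zeta_2,\zeta_3]\right).
\]
I would then invoke Lemma \ref{Harm-Pol-isomorphism} to decompose the right-hand side as
\[
\bigoplus_{k=0}^a \Pol_{a-k}[t]_{\text{even}} \otimes \Hom_{SO(2)}\!\left(V^3, \C_m\otimes \Harm^k(\C^2)\right),
\]
reducing the question to computing the $SO(2)$-equivariant maps into each harmonic summand.

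Finally, Lemma \ref{lemma-Harm} tells us exactly which $k$ contribute: the $k$-th summand is nonzero (and then one-dimensional, spanned by $h_k^{\eta(m)}$) if and only if $k \in K(m) = \{|m-1|, |m|, |m+1|\}$. Putting this together, the whole space is spanned by the elements $(T_{a-k}g_k)h_k^{\eta(m)}$ with $k \in K(m)$, $0 \leq k \leq a$, and $g_k \in \Pol_{a-k}[t]_{\text{even}}$ (vacuous when $a-k < 0$ under our convention). This yields a nonzero space precisely when some $k \in K(m)$ satisfies $k \leq a$, i.e., $a \geq \min K(m)$, completing the equivalence and simultaneously giving the explicit description. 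The proof is essentially bookkeeping once the two preceding lemmas are in hand; the only subtlety is verifying the $A$-weight computation, which is where one must read (\ref{action-SO(2)-on-Pol}) carefully to get the sign of $a$ right.
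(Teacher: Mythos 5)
Your argument is correct and follows exactly the route of the paper's own proof: first extract the condition $a=\nu-\lambda$ from the $A$-weight computation via (\ref{action-SO(2)-on-Pol}), then reduce to $SO(2)$-equivariance and combine Lemmas \ref{lemma-Harm} and \ref{Harm-Pol-isomorphism} to identify the surviving summands $k\in K(m)$ with $k\leq a$. Nothing further is needed.
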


\begin{proof}
First, we consider the action of the last factor of $L^\prime \simeq SO(2) \times A$. By (\ref{action-SO(2)-on-Pol}), this action on $\Pol^a(\n_+)$ is given by:
\begin{equation*}
A \times \Pol^a(\n_+) \rightarrow \Pol^a(\n_+), \enspace (\exp(tH_0), p) \mapsto e^{-ta}p.
\end{equation*}
Therefore, (i) is satisfied if and only if $e^{\lambda t} = e^{(\nu - a)t}$ for all $t\in \R$, or equivalently, if and only if $a = \nu-\lambda$. Then, the proof of the proposition reduces to the action of $SO(2)$, in which case the result is clear from Lemmas \ref{lemma-Harm} and \ref{Harm-Pol-isomorphism}.
\end{proof}

\section{Step 2: Solving the differential equation (\ref{F-system-2-concrete})}\label{section-step2}
In this section we consider the second step of the F-method for the case $m \in \N_+$. The case $m \in -\N_+$ will be treated in Section \ref{section-case_m_lessthan_-1}.

\subsection{Description of $\Sol(\n_+; \sigma_\lambda^3, \tau_{m, \nu})$ and proof of Theorems \ref{mainthm1} and \ref{mainthm2} for $m \geq 1$}\label{section-proof-of-main-thms}
The aim of Step 2 is to determine the space $\Sol(\n_+; \sigma_\lambda^3, \tau_{m, \nu})$. By Proposition \ref{Firststep-prop}, this is to solve the system of differential equations
\begin{equation*}
\left(\widehat{d\pi_\mu}(C)\otimes \id_{\C_{m, \nu}}\right)\psi = 0 \quad (\forall C\in \n_+^\prime),
\end{equation*}
where  $\psi = \sum_{k \in K(m)}\left(T_{\nu-\lambda-k}g_k\right)h_k^{\eta(m)}, \text{ for } g_k \in \Pol_{\nu-\lambda-k}[t]_\text{{\normalfont{even}}}$.
Theorem \ref{Thm-step2} below gives a complete description of $\Sol(\n_+; \sigma_\lambda^3, \tau_{m, \nu})$ for $m \in \N_+$.

\begin{thm}\label{Thm-step2} The following three conditions on the triple $(\lambda, \nu, m) \in \C^2\times \N_+$ are equivalent.
\begin{enumerate}[label=\normalfont{(\roman*)}, topsep=0pt]
\item $\Sol(\n_+; \sigma_\lambda^3, \tau_{m, \nu}) \neq \{0\}$.
\item $\dim_\C \Sol(\n_+; \sigma_\lambda^3, \tau_{m, \nu}) = 1.$
\item The triple $(\lambda, \nu, m)$ belongs to one of the following cases.
\begin{itemize}[leftmargin=1.5cm]
\item[\normalfont{Case 1.}] $m = 1$ and $\nu - \lambda \in \N$.
\item[\normalfont{Case 2.}] $m > 1$, $\lambda \in \Z_{\leq 1-m}$, and $\nu \in \{0, 1, 2\}$.
\end{itemize}
\end{enumerate}
Moreover, if one of the above (therefore all) conditions is satisfied, then
\begin{equation*}
\Sol(\n_+; \sigma_\lambda^3, \tau_{m, \nu}) = \C\sum_{k = m-1}^{m+1}\left(T_{a -k}g_k\right)h_k^+,
\end{equation*}
where $a := \nu - \lambda$ and the polynomials $g_{m-j}(t) \enspace (j = -1,0,1)$ are given as follows:
\begin{equation}\label{solution-all}
\begin{array}{ll}
\bullet \text{ If }a=m-1: &(g_{m-1}(t), g_m(t), g_{m+1}(t)) = (1, 0, 0).\\[7pt]
\bullet \text{ If } m = 1 \text{ and } a>0: &(g_{0}(t), g_1(t), g_{2}(t)) = (\ref{solution-m=1}).\\[7pt]
\bullet \text{ If } m > 1 \text{ and } a>m-1: &(g_{m-1}(t), g_m(t), g_{m+1}(t)) = (\ref{solution-m>1}).
\end{array}
\end{equation}
\begin{equation}\label{solution-m=1}
\left(-\left(\lambda + \left[\frac{a-1}{2}\right]\right)\Geg_{a}^\lambda(it), -i\gamma(\lambda-1, a)\Geg_{a-1}^\lambda(it), \Geg_{a-2}^\lambda(it)\right)
\end{equation}
\begin{equation}\label{solution-m>1}
\left((-1)^{\nu+1}iAB \widetilde{C}^\lambda_{a-m+1-2\nu}(it), -C\Geg_{a-m}^{\lambda-1}(it) + iBt \Geg_{a-m-1}^\lambda(it), iB \widetilde{C}^\lambda_{a-m-1}(it)\right)
\end{equation}
Here, $\widetilde{C}_\ell^\mu(z)$ stands for the renormalized Gegenbauer polynomial (see {\normalfont{(\ref{Gegenbauer-polynomial(renormalized)})}}), and we regard $\widetilde{C}_d^\mu = 0$ for $d < 0$. The constants $A, B, C$ are defined as in {\normalfont{(\ref{const-A}})}, {\normalfont{(\ref{const-B}})} and {\normalfont{(\ref{const-C}})}.
\end{thm}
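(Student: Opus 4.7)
The plan is to solve the F-system $(\widehat{d\pi_\mu}(C)\otimes \id_{\C_{m,\nu}})\psi = 0$ for $C \in \n_+^\prime = \spanned_\R\{C_1^+, C_2^+\}$ with $\psi$ coming from the parametrization in Proposition \ref{Firststep-prop}. Writing $a := \nu - \lambda$ and noting that for $m \geq 1$ we have $K(m) = \{m-1, m, m+1\}$, the unknown $\psi$ has the form
\begin{equation*}
\psi = \sum_{k=m-1}^{m+1}\left(T_{a-k}g_k\right) h_k^+,\qquad g_k \in \Pol_{a-k}[t]_\text{{\normalfont{even}}}.
\end{equation*}
Since $\n_+$ is abelian, the closed formula (\ref{dpimu-general_expression}) realizes $\widehat{d\pi_\mu}(C_j^+)$ as a second-order differential operator in $(\zeta_1,\zeta_2,\zeta_3)$ with coefficients in $\End((V^3)^\vee)$. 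I would first compute these two operators explicitly using (\ref{dpimu-general_expression}) and the bracket relations (\ref{brackets-Cpm}); by the $SO(2)$-equivariance built into (\ref{F-system-1-concrete}), it then suffices to impose a single complex combination, namely $C_+^+ := C_1^+ + iC_2^+$, which acts most transparently on the harmonic factors $h_k^+$ built from $(\zeta_1+i\zeta_2)^k$.

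Next, I would substitute the ansatz into $\widehat{d\pi_\mu}(C_+^+)\psi = 0$. Since $h_k^+$ pairs $u_{k-m+2}^\vee$ with $(\zeta_1+i\zeta_2)^k$, the action of $C_+^+$ shifts $k \mapsto k\pm 1$ and couples the three unknowns. Introducing the variables $s := \zeta_1^2+\zeta_2^2$ and $t := \zeta_3/\sqrt{s}$ and using the decomposition (\ref{bijection-T_b}), the PDE collapses, after collecting independent monomials $u_j^\vee \otimes s^{b_1}\zeta_3^{b_2}(\zeta_1+i\zeta_2)^k$, into a coupled system of three ODEs for $g_{m-1}(t), g_m(t), g_{m+1}(t)$. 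The coefficients of these ODEs should match the recursion relations of the renormalized Gegenbauer polynomials $\widetilde{C}_\ell^\mu$ (see \cite[Sec.\ 2]{kkp}), which is precisely how Gegenbauer polynomials enter formulas (\ref{solution-m=1}) and (\ref{solution-m>1}).

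Once the ODE system is in hand, the case analysis proceeds as follows. For $m=1$ and $a=0$ the system is trivial and yields the constant solution $(1,0,0)$; for $m=1$ and $a\geq 1$ the three ODEs are compatible for every $\lambda \in \C$, and the Gegenbauer ansatz (\ref{solution-m=1}) satisfies them thanks to the standard three-term and derivative identities for $\Geg_\ell^\lambda$ (the factor $\gamma(\lambda-1,a)$ absorbs the even/odd renormalization). For $m>1$ and $a = m-1$ the solution is again forced to be $(1,0,0)$. The delicate case is $m>1$ with $a > m-1$: here the system is overdetermined, and compatibility forces the extra conditions $\lambda \in \Z_{\leq 1-m}$ and $\nu \in \{0,1,2\}$. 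These arise because, for such $(\lambda,\nu)$, the polynomial $\widetilde{C}_{a-m+1-2\nu}^\lambda(it)$ and the coefficients $\gamma(\lambda-1, a-m)$ exhibit the right collapse (recall that $\widetilde{C}^\mu_\ell$ simplifies substantially when $\mu$ is a non-positive integer), producing the scalars $A,B,C$ from (\ref{const-A})--(\ref{const-C}) as the unique coefficients making (\ref{solution-m>1}) a common solution.

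The equivalence (i) $\Leftrightarrow$ (ii) will follow automatically from the explicit one-parameter family obtained in each admissible case, since the ODE system at each level has exactly a one-dimensional solution space of polynomials in $t$. The main obstacle is the second step: producing the explicit system of three coupled ODEs from $\widehat{d\pi_\mu}(C_+^+)\psi = 0$ and identifying its coefficients with the Gegenbauer recursion. In particular, extracting from this system the precise arithmetic conditions ($\lambda \in \Z_{\leq 1-m}$ and $\nu \in \{0,1,2\}$) for $m>1$ and simultaneously pinning down the constants $A$, $B$, $C$ is the core technical content; once that is done, verifying that (\ref{solution-m=1}) and (\ref{solution-m>1}) solve the respective systems is a matter of applying standard Gegenbauer identities.
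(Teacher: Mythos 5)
Your strategy coincides with the paper's: by irreducibility of the $L^\prime$-action on $\n_+^\prime(\R)$ one imposes the equation for a single element of $\n_+^\prime$ (the paper uses $C_1^+$; your $C_1^+ + iC_2^+$ would serve equally well), and the transform $T_b$ converts the resulting polynomial identity into ODEs in $t$ solved by renormalized Gegenbauer polynomials. The gap is that the two places where your sketch is vague are precisely where the theorem is decided. First, the reduction yields \emph{six} ODEs, not three: each vector component $M_s(\psi)=0$ is an identity $\zeta_1^2p_1+\zeta_2^2p_2+\zeta_1\zeta_2p_3+p_4=0$ with $O(2,\C)$-invariant $p_j$, and this splits (via $p_1=p_2$, $p_3=0$, $Q_2(\zeta^\prime)p_1+p_4=0$) into two independent scalar ODEs per component (Lemmas \ref{lemma-invariant-polynomials} and \ref{lemma-formulae-Ms-final}, producing the system $(E_1)$--$(E_6)$). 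A genuinely $3\times 3$ system would admit a solution for generic $\lambda$, so this doubling is not bookkeeping; it is the source of the overdetermination you invoke.

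Second, your account of why $\lambda\in\Z_{\leq 1-m}$ and $\nu\in\{0,1,2\}$ appear --- a ``collapse'' of $\Geg^\lambda_{a-m+1-2\nu}$ --- names the relevant identity (\ref{identity-KOSS}) but not the mechanism. In the paper the constraints come from two separate compatibility requirements: (a) equations $(E_1)$ and $(E_3)$ force $g_{m-1}$ and $g_m$ to be built from $\Geg^{\lambda}_{a+m-1}(it)$ and $\Geg^{\lambda-1}_{a+m}(it)$, whose a priori degrees exceed the bounds $a-m+1$ and $a-m$ required by $g_k\in\Pol_{a-k}[t]_{\text{even}}$, so the top coefficients must vanish (Lemma \ref{lemma-lambda-sets}); and (b) the two expressions for $g_m$ obtained from $(E_3)$ and from $(E_4)$ --- one of degree up to $a+m$, the other of degree up to $a-m$ --- must agree as polynomials, which for $m>1$ holds only for $\lambda\in\{-a,1-a,2-a\}$ and simultaneously fixes the ratios encoded in $A$, $B$, $C$ (Lemma \ref{lemma-equation-pqrs}). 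Your proposal does not register (a) at all and treats (b) as a routine application of Gegenbauer identities, whereas it is the content of Sections \ref{section-proof_of_finding_equations} and \ref{section-proof_of_solving_equations}; as written, the proposal is a correct roadmap with the decisive verifications missing.
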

We shall discuss the proof of Theorem \ref{Thm-step2} in Section \ref{section-reductiontheorems}. By using Theorem \ref{Thm-step2} we first give a proof of Theorems \ref{mainthm1} and \ref{mainthm2} for $m\in \N_+$ below.

\begin{proof}[Proof of Theorem \ref{mainthm1} for $m \geq 1$]
Recall from (\ref{isomorphism-Fmethod-concrete}) that there exists a linear isomorphism
\begin{equation*}
\begin{tikzcd}[column sep = 1cm]
\Diff_{G^\prime}\left(C^\infty(S^3, \V_\lambda^3), C^\infty(S^2, \L_{m,\nu})\right) \arrow[r, "\displaystyle{_{\Symb \otimes \id}}", "\thicksim"'] & \Sol(\n_+; \sigma_\lambda^3, \tau_{m, \nu}).
\end{tikzcd}
\end{equation*}
Now, Theorem \ref{mainthm1} follows from Theorem \ref{Thm-step2}.
\end{proof}

\begin{proof}[Proof of Theorem \ref{mainthm2} for $m \geq 1$]
By (\ref{isomorphism-Fmethod-concrete}) and Theorem \ref{Thm-step2}, it suffices to compute the inverse of the symbol map $\Symb \otimes \id$ of $\psi = \sum_{k = m-1}^{m+1}\left(T_{a-k}g_k\right)h_k^+$, where $(g_{m-1}, g_m, g_{m+1})$ is the triple given by (\ref{solution-all}).

We suppose $a \geq m$ and $m>1$. The other two cases are simpler and can be done in a similar fashion. By the definition of $T_b$ (see (\ref{Def-T_b})), a direct computation shows that 
\begin{equation}
\begin{array}{ll}
\left(T_{a-m+1}g_{m-1}\right)(\zeta_1, \zeta_2, \zeta_3) &= (-1)^\frac{a-m}{2}AB \left(\zeta_1^2+\zeta_2^2\right)^\nu \left(I_{a-m+1-2\nu}\Geg_{a-m+1-2\nu}^\lambda\right)\left(-(\zeta_1^2 + \zeta_2^2), \zeta_3\right),\\[6pt]
\left(T_{a-m}g_{m}\right)(\zeta_1, \zeta_2, \zeta_3) &= (-1)^\frac{a-m}{2}\Big(-C\left(I_{a-m}\Geg_{a-m}^{\lambda-1}\right)\left(-(\zeta_1^2 + \zeta_2^2), \zeta_3\right)\\[6pt]
&\hspace*{\fill}+ B \zeta_3\left(I_{a-m-1}\Geg_{a-m-1}^{\lambda}\right)\left(-(\zeta_1^2 + \zeta_2^2), \zeta_3\right)\Big),\\[6pt]
\left(T_{a-m-1}g_{m+1}\right)(\zeta_1, \zeta_2, \zeta_3) &= (-1)^\frac{a-m}{2}B \left(I_{a-m-1}\Geg_{a-m-1}^\lambda\right)\left(-(\zeta_1^2 + \zeta_2^2), \zeta_3\right),
\end{array}
\end{equation}
where $(I_\ell\Geg_\ell^\mu)(x,y) := x^{\frac{\ell}{2}}\Geg_\ell^\mu\left(\frac{y}{\sqrt{x}}\right)$. Thus, by (\ref{generators-Hom(V3,CmHk)}) and dividing each coordinate by $(-1)^\frac{a-m}{2}$, we obtain that the generator of (\ref{DSBO-space}) is
\begin{multline*}
AB \left(\Delta_{\R^2}\right)^\nu\widetilde{\C}_{\lambda+1, 2-\nu-m}\left(\frac{\partial}{\partial x_1} + i\frac{\partial}{\partial x_2} \right)^{m-1} \otimes u_1^\vee\\
+ \left(-C\widetilde{\C}_{\lambda, \nu-m} + B\frac{\partial}{\partial x_3}\widetilde{\C}_{\lambda+1, \nu -m} \right)\left(\frac{\partial}{\partial x_1} + i\frac{\partial}{\partial x_2} \right)^m \otimes u_2^\vee\\
+ B\widetilde{\C}_{\lambda+1, \nu-m} \left(\frac{\partial}{\partial x_1} + i\frac{\partial}{\partial x_2} \right)^{m+1}  \otimes u_3^\vee,
\end{multline*}
where
\begin{equation*}
\begin{gathered}
\widetilde{\C}_{\lambda, \nu} := \Rest_{x_3 = 0} \circ \left(I_{\nu - \lambda} \widetilde{C}^{\lambda-1}_{\nu-\lambda}\right)\left(-\Delta_{\R^2}, \frac{\partial}{\partial x_3}\right).
\end{gathered}
\end{equation*}
If we multiply by $2^{-m}$ and use the identities below that come from the change of variables $z = x_1 + ix_2$ in the generator above, we obtain the expression (\ref{Operator-general+}) of the operator $\D_{\lambda, \nu}^m$.
\begin{equation*}
2 \frac{\partial}{\partial \overline{z}} = \left(\frac{\partial}{\partial x_1} + i\frac{\partial}{\partial x_2}\right), \quad 4\frac{\partial^2}{\partial z \partial \overline{z}} = \Delta_{\R^2}.
\end{equation*}
\end{proof}

\subsection{Reduction Theorems}\label{section-reductiontheorems}
The proof of Theorem \ref{Thm-step2} will be separated into to reduction theorems: Theorem \ref{Thm-findingequations} (finding equations), and Theorem \ref{Thm-solvingequations} (solving equations). We state them in this subsection and give a proof in the following sections. From now on, we assume $a = \nu - \lambda\in \N$ and $m \in \N_+$.

For any $\mu \in \C$ and any $\ell \in \N$, the \textit{imaginary Gegenbauer differential operator} $S_\ell^\mu$ is defined as follows (see (\ref{Gegen-imaginary})) (cf. \cite[(6.5)]{kkp}):
\begin{equation*}
S_\ell^\mu = - \left((1 + t^2) \frac{d^2}{dt^2} + (1 + 2\mu)t\frac{d}{dt} - \ell(\ell + 2\mu)\right).
\end{equation*}
Now, for $f_0, f_1, f_2 \in \C[t]$ let $L_r(f_0, f_1, f_2)(t)$ $(r = 1, \dots, 6)$ be the following polynomials:
\begin{equation}\label{expression-operators-Lr}
\begin{array}{l}
L_1(f_0, f_1, f_2)(t) := S_{a+m-1}^{\lambda} f_0.\vspace{0.2cm}\\

L_2(f_0, f_1, f_2)(t) := S_{a-m-1}^{\lambda} f_2.\vspace{0.2cm}\\

L_3(f_0, f_1, f_2)(t) := S_{a+m}^{\lambda - 1}f_1 - 2\frac{d}{dt}f_0.\vspace{0.2cm}\\

L_4(f_0, f_1, f_2)(t) := S_{a-m}^{\lambda - 1}f_1 + 2\frac{d}{dt}f_2.\vspace{0.2cm}\\

L_5(f_0, f_1, f_2)(t) := (-m(\lambda + a-1) + \lambda -1 + \vartheta_t) f_0 + \frac{d}{dt}f_1.\vspace{0.2cm}\\

L_6(f_0, f_1, f_2)(t) :=(m(\lambda + a-1) + \lambda -1 + \vartheta_t) f_2 - \frac{d}{dt}f_1.
\end{array}
\end{equation}
Here, $\vartheta_t$ stands for the Euler operator $\vartheta_t := t\frac{d}{dt}$.
 
\begin{thm}[\textbf{Finding equations}]\label{Thm-findingequations} Let $m \in \N_+$ and set $\psi = \sum_{k = m-1}^{m+1}\left(T_{a-k}g_k\right)h_k^+$. Then, the following two conditions on $(g_{m-1}, g_m, g_{m+1})$ are equivalent.
\begin{enumerate}[label=\normalfont{(\roman*)}, topsep=0pt]
\item $\left(\widehat{d\pi_\mu}(C)\otimes \id_{\C_{m,\nu}}\right)\psi = 0$, for all $C \in \n_+^\prime$.
\item $L_r(g_{m-1}, g_m, g_{m+1}) = 0$, for all $r = 1, \dots, 6$.
\end{enumerate}
\end{thm}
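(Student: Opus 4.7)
The plan is to explicitly compute $\widehat{d\pi_\mu}(C_j^+)$ for $j=1,2$ (a basis of $\n_+'$), apply it to the ansatz $\psi = \sum_{k=m-1}^{m+1}(T_{a-k}g_k)h_k^+$, and unpack the resulting identity into the six scalar ODEs $L_r = 0$. First I would use formula (\ref{dpimu-general_expression}), together with the Gelfand--Naimark decomposition of $\g = \so(5,\C)$ built from the basis (\ref{elements}), to write $d\pi_\mu(C_j^+)$ as a concrete first/second order differential operator on $C^\infty(\n_-, (V^3)^\vee)$. Since $\n_+$ is abelian and $[C_j^+, C_k^-] = 2X_{j,k} - 2\delta_{j,k}H_0$ by (\ref{brackets-Cpm}), the operator $d\pi_\mu(C_j^+)$ is of second order in the $C_k^-$-directions, of first order in the representation part $d\mu$ of $\l$, plus a scalar term coming from the action of $H_0$ with weight $\lambda$ (more precisely with weight $2\rho - \lambda$ once we pass to $\mu = (\sigma^3_\lambda)^*$). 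Fourier-transforming via $\widehat{\partial/\partial z_k} = -\zeta_k$, $\widehat{z_k} = \partial/\partial \zeta_k$ gives an operator on $\Pol(\n_+) \otimes (V^3)^\vee$ whose principal symbol is quadratic in $\zeta = (\zeta_1, \zeta_2, \zeta_3)$.

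Next I would substitute the ansatz. Because the $h_k^+$ are the harmonic polynomials in $(\zeta_1, \zeta_2)$ listed in (\ref{generators-Hom(V3,CmHk)}), the action of $X_{p,q}$ only permutes the three summands (shifting $k$ by $\pm 1$), while the purely differential pieces of $\widehat{d\pi_\mu}(C_j^+)$ act on $T_{a-k}g_k$. Writing $\zeta_1 \pm i \zeta_2$ in polar-like coordinates adapted to $T_b$ (namely $r^2 = \zeta_1^2 + \zeta_2^2$, $s = \zeta_3$, angle $\theta$), the equation for each angular mode reduces to an identity in the single variable $t = s/r$, because $T_bg(\zeta) = r^b g(t)$. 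The key computational lemma is that the Laplacian $\partial_{\zeta_1}^2 + \partial_{\zeta_2}^2 + \partial_{\zeta_3}^2$, after restriction to functions of the form $r^b g(t)$, yields exactly the imaginary Gegenbauer operator $S_b^{\mu}$ for the appropriate parameter $\mu$; this is the same mechanism underlying \cite[Lem.~6.?]{kkp}, and the first-order cross terms coming from $d\mu$ and from the $X_{j,k}$ are responsible for the $-2\,d/dt$, $+2\,d/dt$, and $\pm d/dt$ contributions in $L_3, L_4, L_5, L_6$.

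The two equations $\widehat{d\pi_\mu}(C_1^+)\psi = 0$ and $\widehat{d\pi_\mu}(C_2^+)\psi = 0$ are related by the $SO(2)$-equivariance built into $h_k^+$, so it suffices to analyse one, say $C_1^+$, and group terms by their $h_k^+$-component after projection. Each harmonic mode $h_k^+$ (for $k \in \{m-1, m, m+1\}$) appears with contributions from two consecutive summands of $\psi$ (since the operator shifts $k$ by $\pm 1$), giving one scalar equation per pair. Separating the terms coming from the pure Laplacian part (which give the second-order operators $S_{a+m-1}^\lambda$, $S_{a-m-1}^\lambda$, $S_{a\pm m}^{\lambda-1}$) from the $d\mu$-part and the constant term from $H_0$ (which account for the $\vartheta_t$, the $\pm m(\lambda+a-1)$ and the $\lambda - 1$ in $L_5$, $L_6$) should produce exactly the six identities in (\ref{expression-operators-Lr}).

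The main obstacle will be the careful bookkeeping: keeping track of the correct parameter shifts (the $\lambda$ vs $\lambda-1$ in $S_\ell^\mu$, and the index shifts $a+m-1$, $a-m-1$, $a\pm m$), and showing that no independent equation is lost or double-counted when one collapses the full $\n_+'$-system onto the three harmonic components. A clean way to handle this is to verify that $S_\ell^\mu$ really is the pullback of the flat Laplacian along $T_b$ for the relevant $b$, and to use the intertwining identities $\partial_{\zeta_3}(T_b g) = T_{b-1}(g')$ and $(\partial_{\zeta_1} \mp i\partial_{\zeta_2})(T_b g \cdot (\zeta_1 \pm i\zeta_2)^k)$ expanded back into the $T_{b\pm 1}$ basis, which is where the mixing between the three $g_k$'s originates. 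Once these reduction identities are in place, matching coefficients against the basis $\{h_{m-1}^+, h_m^+, h_{m+1}^+\}$ gives (ii) from (i), and since every step is reversible the converse is automatic.
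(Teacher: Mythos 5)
Your overall strategy is the same as the paper's: reduce to the single element $C_1^+$ (justified there by the irreducibility of the $L^\prime$-action on $\n_+^\prime(\R)$, cf. \cite[Lem.~3.4]{kkp}), split $\widehat{d\pi_\mu}(C_1^+)$ into a scalar part and a vector part, and use the $T$-saturation identities (Lemma \ref{lemma-Tsaturation-formulas}) to convert the flat Laplacian acting on $T_b g$ into the imaginary Gegenbauer operator. All of that is sound and matches Sections 6.1--6.3.

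The gap is in your final step, and it is a real one: after projecting onto the three components $u_s^\vee$, you obtain only \emph{three} polynomial identities $M_s(\psi)=0$ ($s=1,2,3$), whereas the target system consists of \emph{six} scalar ODEs $L_1=\cdots=L_6=0$. Your phrase ``giving one scalar equation per pair'' and the proposal to ``match coefficients against the basis $\{h_{m-1}^+,h_m^+,h_{m+1}^+\}$'' would produce only three conditions, so the equivalence with (ii) is not established. The missing idea is that each $M_s(\psi)$ has the form $(\zeta_1+i\zeta_2)^{k}\bigl[\zeta_1^2p_1+\zeta_2^2p_2+\zeta_1\zeta_2p_3+p_4\bigr]$ with $p_1,\dots,p_4$ of the form $T_\ell(\cdot)$, hence $O(2,\C)$-invariant in $(\zeta_1,\zeta_2)$; because $C_1^+$ is not fixed by $SO(2)$, the bracket is \emph{not} of a single $SO(2)$-weight, and its vanishing splits into two independent conditions, $p_1=p_2$ (equivalently $p_3=0$) and $Q_2(\zeta^\prime)p_1+p_4=0$. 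This is exactly Lemma \ref{lemma-invariant-polynomials}, and it is what turns each of the three vector components into two scalar equations (Proposition \ref{prop-equivalence-Ms-Lr}), yielding the six operators $L_r$ after a further invertible linear change (the $L_r^\prime$ in the paper). Without this splitting mechanism --- or the equivalent device of testing $\psi$ against the two weight-shifting combinations $C_1^+\mp iC_2^+$ separately --- your argument cannot recover all six equations, and the reverse implication (ii)~$\Rightarrow$~(i) is likewise not ``automatic'' until the correspondence between the three $M_s$ and the six $L_r$ is made exact.
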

To state the other reduction theorem, let
\begin{equation}\label{equation-space}
\Xi_{\lambda, a, m} := \left\{(g_{m-1}, g_m, g_{m+1}) \in \bigoplus_{k = m-1}^{m+1} \Pol_{a-k}[t]_\text{{\normalfont{even}}} : \begin{array}{l}
L_r(g_{m-1}, g_m, g_{m+1}) = 0\vspace{0.2cm}\\
\forall r= 1, \dots, 6
\end{array}
\right\}.
\end{equation}
\begin{thm}[\textbf{Solving equations}]\label{Thm-solvingequations} The following three conditions on the triple $(\lambda, a, m) \in \C\times\N \times \N_+$ are equivalent.
\begin{enumerate}[label=\normalfont{(\roman*)}, topsep=0pt]
\item $\Xi_{\lambda, a, m} \neq \{0\}$.
\item $\dim_\C \Xi_{\lambda, a, m}= 1$.
\item The triple $(\lambda, a, m)$ belongs to one of the following cases.
\begin{enumerate}[label=\normalfont{(\arabic*)}, leftmargin=1cm]
\item If $a = m-1$\\
\text{{\normalfont{(1.I)}}} $m = 1$ and $\lambda \in \C$, or \text{{\normalfont{(1.II)}}} $m > 1$ and $\lambda = -a$.
\item If $a = m$\\
\text{{\normalfont{(2.I)}}} $m = 1$ and $\lambda \in \C$, or \text{{\normalfont{(2.II)}}} $m > 1$ and $\lambda \in \{-a, 1-a\}$.
\item If $a \geq m+1$\\
\text{{\normalfont{(3.I)}}} $m = 1$ and $\lambda \in \C$, or \text{{\normalfont{(3.II)}}} $m > 1$ and $\lambda \in \{-a, 1-a, 2-a\}$.
\end{enumerate}
\end{enumerate}

Moreover, if \text{{\normalfont{(iii)}}} is satisfied, the generator of $\Xi_{\lambda, a, m} $ is given by \text{{\normalfont{(\ref{solution-all})}}}.
\end{thm}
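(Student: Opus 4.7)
The plan is to solve the six equations $L_r = 0$ hierarchically: the second-order pair $L_1, L_2$ decouples into imaginary Gegenbauer equations on $g_{m-1}$ and $g_{m+1}$ alone, the first-order pair $L_5, L_6$ then determines $g_m$ together with an algebraic compatibility between $g_{m\pm 1}$, and finally $L_3, L_4$ should turn out to be automatic consequences of $L_1, L_2, L_5, L_6$ together with standard Gegenbauer contiguous relations.

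First I would classify the polynomial kernel of $S^\mu_\ell$ inside $\Pol_d[t]_\text{even}$ for $d\le\ell$. A direct computation gives
\[
S^\mu_\ell\, t^k \;=\; -k(k-1)\, t^{k-2} \;+\; (\ell-k)(\ell+k+2\mu)\, t^k,
\]
so a non-zero solution of degree $N$ must satisfy $(\ell-N)(\ell+N+2\mu)=0$, giving either the generic branch $N=\ell$ (realized by the renormalized Gegenbauer polynomial $\widetilde{C}^\mu_\ell(it)$) or a degenerate branch $N=-\ell-2\mu\in\N$ that exists only in an arithmetic progression of $\mu$. Applied to $L_1=S^\lambda_{a+m-1}g_{m-1}=0$ and $L_2=S^\lambda_{a-m-1}g_{m+1}=0$, this produces a dichotomy: if $m=1$ the generic branch already fits the required degrees $a$ and $a-2$ for every $\lambda$, while if $m>1$ the available degrees $a-m\pm1$ are strictly smaller than $a+m-1$, so a non-trivial $g_{m-1}$ or $g_{m+1}$ forces $\lambda$ into the finite set $\{-a,\,1-a,\,2-a\}$ obtained by intersecting the two arithmetic progressions with the finite degree window.

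Next I would analyze $L_5$ and $L_6$. Summing them eliminates $g_m'$ and produces the algebraic identity
\[
(\lambda-1+\vartheta_t)(g_{m-1}+g_{m+1}) \;+\; m(\lambda+a-1)(g_{m+1}-g_{m-1}) \;=\; 0,
\]
which, after substituting the candidates from the previous step and comparing coefficients of powers of $t$, fixes the proportionality between $g_{m-1}$ and $g_{m+1}$; this is how the constants $A, B, C$ of (\ref{const-A})-(\ref{const-C}) arise. Once $g_{m\pm 1}$ are fixed, $L_5$ can be integrated to produce a unique $g_m\in\Pol_{a-m}[t]_\text{even}$, yielding the explicit triples (\ref{solution-m=1}) and (\ref{solution-m>1}). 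The boundary case $a=m-1$ is treated separately: the degree bounds force $g_m=g_{m+1}=0$ and $g_{m-1}$ constant, so $L_1$ alone selects $\lambda=-a$ in subcase (1.II), producing the triple $(1,0,0)$.

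The final step is to verify $L_3=L_4=0$ on these candidates. My plan is to differentiate $L_5$ and eliminate $g_{m-1}''$ via $L_1$, which should recast $L_3$ as a three-term Gegenbauer recursion that is automatically satisfied; the symmetric argument with $L_6, L_2$ handles $L_4$. The hard part will be the bookkeeping through the degenerate strata: one must confirm that the renormalizations encoded in $\widetilde{C}^\mu_\ell$ and in $\gamma(\mu,\ell)$ keep all Gegenbauer identities regular precisely at the parameters (1.I)-(3.II) where leading coefficients of the standard recursions vanish, and hence that the triple (\ref{solution-all}) is non-zero in exactly these cases and zero outside them. This degeneracy analysis is what ultimately pins down the one-dimensionality statement in (ii) and rules out accidental extra solutions.
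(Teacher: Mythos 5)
Your overall architecture (classify the polynomial kernel of $S^\mu_\ell$ in a restricted degree window, then propagate through the first-order equations) is reasonable, and the computation $S^\mu_\ell t^k = -k(k-1)t^{k-2}+(\ell-k)(\ell+k+2\mu)t^k$ is correct. But the step where you claim that $L_1=L_2=0$ already force $\lambda\in\{-a,1-a,2-a\}$ for $m>1$ is wrong, and it is exactly where the real difficulty of the theorem lives. The equation $L_2=S^\lambda_{a-m-1}g_{m+1}=0$ imposes \emph{no} condition on $\lambda$: here the Gegenbauer index equals the degree bound $a-m-1$, so the generic branch $g_{m+1}=s\widetilde{C}^\lambda_{a-m-1}(it)$ is admissible for every $\lambda$. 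And $L_1$ alone, via your degenerate branch $N=-(a+m-1)-2\lambda$ with $0\le N\le a-m+1$, allows $\lambda=j-a$ for every $j=0,1,\dots,\left[\frac{a-m+1}{2}\right]$ — a set of roughly $(a-m)/2$ values, not three. The cut-down to $\{-a,1-a,2-a\}$ comes only from the compatibility between the two determinations of $g_m$ (equivalently, from requiring your summed identity $(\lambda-1+\vartheta_t)(g_{m-1}+g_{m+1})+m(\lambda+a-1)(g_{m+1}-g_{m-1})=0$ to admit a nonzero pair $(q,s)$ coefficient by coefficient, together with (\ref{p_and_q_relation}) and (\ref{r_and_s_relation})). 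In the paper this is the content of the identity (\ref{the-equation-pqrs-twosided}) and of Lemmas \ref{lemma-lambda-sets}, \ref{lemma-lowerterms}, \ref{lemma-claim} and \ref{lemma-equation-pqrs}, which occupy most of Section \ref{section-proof_of_solving_equations}; your proposal attributes the conclusion to the wrong equations and then gestures at the remaining analysis as ``bookkeeping'' without carrying it out.

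A second, smaller gap: $L_3$ and $L_4$ are not automatic consequences of $L_1,L_2,L_5,L_6$. Integrating $L_5$ determines $g_m$ only up to an additive constant whenever $a-m$ is even, and $L_3(g_m+c)=L_3(g_m)+(a+m)(a+m+2\lambda-2)c$, so generically $L_3$ is precisely what pins that constant down (this is why the paper uses $(E_3),(E_4)$ to produce the two closed forms (\ref{expression_f1_1}), (\ref{expression_f1_2}) of $f_1$ and reserves $(E_5),(E_6)$ for the scalar relations). In degenerate strata such as $a=m$, $\lambda=1-a$, the solution is $(0,1,0)$ and $L_5,L_6$ are vacuous, so $L_3,L_4$ carry all the information. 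Your route — eliminating $g_m'$ from $L_5+L_6$ and comparing coefficients of the two fixed Gegenbauer polynomials — could well be made to work and might even be cleaner than the paper's comparison of the two antiderivative expressions for $f_1$, but as written the proposal both mislocates the source of the constraint on $\lambda$ and omits the argument that produces it.
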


Theorems \ref{Thm-findingequations} and \ref{Thm-solvingequations} will be proved in Sections \ref{section-proof_of_finding_equations} and \ref{section-proof_of_solving_equations} respectively.

\section{Proof of Theorem \ref{Thm-findingequations}}\label{section-proof_of_finding_equations}
In this section we prove Theorem \ref{Thm-findingequations}. This is done in particular in Section \ref{section-proof_of_finding_equations-subsection}. We start with the decomposition of the differential operator $\widehat{d\pi_\mu}(C_1^+)\otimes \id_{\C_{m,\nu}}$.

\subsection{Decomposition of the equation $\left(\widehat{d\pi_\mu}(C_1^+)\otimes \id_{\C_{m,\nu}}\right)\psi=0$}

Following the same strategy as in \cite{kkp}, we decompose the equation
\begin{equation*}
\left(\widehat{d\pi_\mu}(C)\otimes \id_{\C_{m,\nu}}\right)\psi = 0 \quad (\forall C\in \n_+^\prime),
\end{equation*}
into a \textit{scalar} part (differential operator of second order), and a \textit{vector} part (differential operator of first order). With a little abuse of notation, we regard $\lambda$ as the one-dimensional representation $\C_\lambda$ of $A$ defined by exponentiation.

In the following, we consider the basis $\{C_j^-: 1 \leq j  \leq 3\}$ of $\n_-$ and write $(z_1, z_2, z_3)$ for the corresponding  coordinates. We identify $\n_-^\vee \simeq \n_+$ and write $(\zeta_1, \zeta_2, \zeta_3)$ for the dual coordinates.

For $Y \in \g$, we define $d\pi_{(\sigma^3, \lambda)^*}^\text{{\normalfont{scalar}}}(Y), d\pi_{(\sigma^3, \lambda)^*}^\text{{\normalfont{vect}}}(Y) \in \Weyl(\n_-) \otimes \End\left((V^3)^\vee\right)$ by
\begin{equation*}
\begin{aligned}
\left(d\pi_{(\sigma^3, \lambda)^*}^\text{{\normalfont{scalar}}}(Y)\right)F(Z)
& = d\lambda^*(\alpha(Y,Z)\big\rvert_\a)F(Z) - \left(\beta(Y, \cdot)F\right)(Z), \\
\left(d\pi_{(\sigma^3, \lambda)^*}^\text{{\normalfont{vect}}}(Y)\right)F(Z)  &= d(\sigma^3)^\vee(\alpha(Y,Z)\big\rvert_\m),
\end{aligned}
\end{equation*}
for $F\in C^\infty\left(\n_-, (V^3)^\vee\right)$, in such a way that $d\pi_\mu \equiv d\pi_{(\sigma^3, \lambda)^*}$ is given by
\begin{equation*}
d\pi_{(\sigma^3, \lambda)^*} = d\pi_{(\sigma^3, \lambda)^*}^\text{{\normalfont{scalar}}} + d\pi_{(\sigma^3, \lambda)^*}^\text{{\normalfont{vect}}},
\end{equation*}
(see (\ref{dpimu-general_expression})). We call $d\pi_{(\sigma^3, \lambda)^*}^\text{{\normalfont{scalar}}}$ and $d\pi_{(\sigma^3, \lambda)^*}^\text{{\normalfont{vect}}}$ the \emph{scalar part} and the \emph{vector part} of $d\pi_{(\sigma^3, \lambda)^*}$ respectively. We have
\begin{equation*}
\begin{aligned}
d\pi_{(\sigma^3, \lambda)^*}^\text{{\normalfont{scalar}}}(Y)
& = d\pi_{\lambda^*}(Y) \otimes \id_{(V^3)^\vee},\\
d\pi_{(\sigma^3, \lambda)^*}^\text{{\normalfont{vect}}}(Y) & =  -\sum_{\ell=1}^3 z_\ell F \circ d\sigma^3\left([Y, C_\ell^-]\big\rvert_\m\right).
\end{aligned}
\end{equation*}
Therefore, by taking the algebraic Fourier transform, $\widehat{d\pi_{(\sigma^3, \lambda)^*}} \otimes \id_{\C_{m, \nu}}$ is of the form
\begin{equation}\label{dpimu-decomposition}
\begin{gathered}
\widehat{d\pi_{(\sigma^3, \lambda)^*}}\otimes \id_{\C_{m,\nu}} = \widehat{d\pi_{\lambda^*}} \otimes \id_{\Hom(V^3, \C_{m,\nu})} + \widehat{d\pi_{(\sigma^3, \lambda)^*}^\text{{\normalfont{vect}}}} \otimes \id_{\C_{m, \nu}},
\end{gathered}
\end{equation}
with $\widehat{d\pi_{(\sigma^3, \lambda)^*}^\text{{\normalfont{vect}}}}(Y) :=  -\sum_{\ell=1}^3 \frac{\partial}{\partial \zeta_\ell} \circ d\sigma^3\left([Y, C_\ell^-]\big\rvert_\m\right)$ 
(see \cite[Prop. 3.5]{kkp}).

Since the Levi subgroup $L^\prime$ acts irreducibly on
the nilradical $\n_+^\prime(\R)$ of $\p^\prime(\R) = \l^\prime(\R) + \n_+^\prime(\R)$, the equation (\ref{F-system-2-concrete}) is satisfied for all $C \in \n_+^\prime$ if and only if it is satisfied for some non-zero $C_0 \in \n_+^\prime$. (For a detailed proof of this fact see \cite[Lem. 3.4]{kkp}.)

In our case, we take $C_0 = C_1^+$ (see (\ref{elements})).
Thus, by (\ref{dpimu-decomposition}), we solve the differential equation
\begin{equation}\label{dpimu-equation}
\left(\widehat{d\pi_{(\sigma^3, \lambda)^*}^\text{{\normalfont{scalar}}}}(C_1^+)\otimes \id_{\C_{m,\nu}}\right)\psi + \left(\widehat{d\pi_{(\sigma^3, \lambda)^*}^\text{{\normalfont{vect}}}}(C_1^+)\otimes \id_{\C_{m,\nu}}\right)\psi= 0,
\end{equation}
where
\begin{equation}\label{dpimu-scalar-vect-formulas}
\begin{aligned}
\widehat{d\pi_{(\sigma^3, \lambda)^*}^\text{{\normalfont{scalar}}}}(C_1^+) &=
\widehat{d\pi_{\lambda^*}}(C_1^+) \otimes \id_{\left(V^3\right)^\vee},\\
\widehat{d\pi_{(\sigma^3, \lambda)^*}^\text{{\normalfont{vect}}}}(C_1^+) &= \sum_{\ell=1}^3 d\sigma^3\left(2X_{\ell, 1}\right)\frac{\partial}{\partial \zeta_\ell},
\end{aligned}
\end{equation}
as $[C_1^+, C_\ell^-] = 2X_{1, \ell}$ (see (\ref{brackets-Cpm})).

For actual computations in the next sections, it is convenient to rewrite the scalar part $\widehat{d\pi_{(\sigma^3, \lambda)^*}^\text{{\normalfont{scalar}}}}$ and the vector part $\widehat{d\pi_{(\sigma^3, \lambda)^*}^\text{{\normalfont{vect}}}}$ in terms of their \textit{vector coefficients} $M_s^\text{{\normalfont{scalar}}}$ and $M_s^\text{{\normalfont{vect}}}$ with respect to the basis $\{u_1, u_2, u_3\}$ of $V^3$ in (\ref{basis-V3}); that is, we write
\begin{equation*}
\begin{aligned}
\left(\widehat{d\pi_{(\sigma^3, \lambda)^*}^\text{{\normalfont{scalar}}}}(C_1^+)\otimes \id_{\C_{m,\nu}}\right)\psi &= \sum_{s=1}^3 M_s^\text{{\normalfont{scalar}}}(\psi)u_s^\vee,\\
\left(\widehat{d\pi_{(\sigma^3, \lambda)^*}^\text{{\normalfont{vect}}}}(C_1^+)\otimes \id_{\C_{m,\nu}}\right)\psi &= \sum_{s=1}^3 M_s^\text{{\normalfont{vect}}}(\psi)u_s^\vee.
\end{aligned}
\end{equation*}
Then, the differential equation (\ref{dpimu-equation}) is equivalent to
\begin{equation}\label{Ms-equal-zero}
M_s(\psi) = M_s^\text{{\normalfont{scalar}}}(\psi) + M_s^\text{{\normalfont{vect}}}(\psi) = 0, \enspace \text{ for } s = 1,2,3.
\end{equation}

\subsection{Vector coefficients $M_s^\text{{\normalfont{scalar}}}(\psi)$ and $M_s^\text{{\normalfont{vect}}}(\psi)$ of $\left(\widehat{d\pi_{(\sigma^3, \lambda)^*}^\text{{\normalfont{scalar}}}}(C_1^+)\otimes \id_{\C_{m,\nu}}\right)\psi$}
Take $\psi = \sum_{s=1}^3 \psi_su_s^\vee \in \Hom_\C\left(V_\lambda^3, \C_{m, \nu} \otimes \Pol^a(\n_+)\right)$. Then, the vector coefficients of the scalar part $M_s^\text{scalar}(\psi)$ are clearly given by
\begin{equation}\label{vector-coefficients-scalar}
M_s^\text{{\normalfont{scalar}}}(\psi) = \widehat{d\pi_{\lambda^*}^\text{{\normalfont{scalar}}}}(C_1^+)\psi_s \quad (s = 1,2,3).
\end{equation}
For the vector part $M_s^\text{vect}(\psi)$, we define
\begin{equation*}
A_{ss^\prime} := \sum_{\ell = 1}^3 (d\sigma^3(2X_{\ell,1})_{ss^\prime})\frac{\partial}{\partial \zeta_\ell} \quad (s, s^\prime = 1,2,3).
\end{equation*}
Then, by (\ref{dpimu-scalar-vect-formulas}), we have
\begin{equation}\label{vector-coefficients-vect}
M_s^\text{{\normalfont{vect}}}(\psi) = \sum_{s^\prime = 1}^3 A_{ss^\prime}\psi_{s^\prime}\quad (s = 1,2,3).
\end{equation}
More concretely, we have the following.
\begin{lemma} \label{lemma-coefficients-Ms-scalar-vector} For $\psi = \sum_{s=1}^3 \psi_su_s^\vee \in \Hom_\C\left(V_\lambda^3, \C_{m, \nu} \otimes \Pol^a(\n_+)\right)$, we have
\begin{equation}\label{Ms-scalar}
M_s^\text{{\normalfont{scalar}}}(\psi) = \left(2\lambda \frac{\partial}{\partial \zeta_1} + 2E_{\zeta}\frac{\partial}{\partial \zeta_1} - \zeta
_1 \Delta^\zeta_{\C^3}\right)\psi_s \quad (s = 1,2,3),
\end{equation}
\begin{equation}\label{Ms-vect}
\begin{pmatrix}
M_1^\text{{\normalfont{vect}}}(\psi)\\[4pt]
M_2^\text{{\normalfont{vect}}}(\psi)\\[4pt]
M_3^\text{{\normalfont{vect}}}(\psi)
\end{pmatrix}
= \begin{pmatrix}
-2i \frac{\partial}{\partial \zeta_2} & 2 \frac{\partial}{\partial \zeta_3}& 0\\[6pt]
 -\frac{\partial}{\partial \zeta_3} & 0 & \frac{\partial}{\partial \zeta_3}\\[6pt]
 0 & -2 \frac{\partial}{\partial \zeta_3} & 2i \frac{\partial}{\partial \zeta_2}
\end{pmatrix}
\begin{pmatrix}
\psi_1\\
\psi_2\\
\psi_3
\end{pmatrix}.
\end{equation}
\end{lemma}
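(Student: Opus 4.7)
The plan is to verify the scalar formula (\ref{Ms-scalar}) and the vector formula (\ref{Ms-vect}) separately, since the decomposition (\ref{dpimu-decomposition}) already splits the operator into these two pieces, and (\ref{vector-coefficients-scalar})--(\ref{vector-coefficients-vect}) reduce the task to identifying $\widehat{d\pi_{\lambda^*}}(C_1^+)$ and the matrix entries $(d\sigma^3(2X_{\ell,1}))_{ss'}$.

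For the scalar formula, because $d\pi_{\lambda^*}$ is scalar-valued, it acts componentwise on $\psi = \sum_s \psi_s u_s^\vee$, so it suffices to compute $\widehat{d\pi_{\lambda^*}}(C_1^+)$ as a differential operator in $(\zeta_1, \zeta_2, \zeta_3)$. The approach is the standard F-method calculation: differentiate the Gelfand--Naimark decomposition of $e^{tC_1^+}\exp Z$ at $t=0$ (with $Z = \sum_\ell z_\ell C_\ell^-$) to extract $\alpha(C_1^+, Z)\rvert_\a$ and the vector field $\beta(C_1^+, Z)$, substitute into (\ref{dpimu-general_expression}) to obtain a first-order polynomial-coefficient operator on $\n_-$, and then apply the algebraic Fourier transform. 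The output is the second-order operator $2\lambda \tfrac{\partial}{\partial \zeta_1} + 2E_\zeta\tfrac{\partial}{\partial \zeta_1} - \zeta_1 \Delta_{\C^3}^\zeta$ of (\ref{Ms-scalar}); this is the three-dimensional instance of the formulas in \cite[Sec. 3]{kob-pev1} and parallels \cite[Lem. 3.7]{kkp}.

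For the vector formula, by (\ref{vector-coefficients-vect}) one needs the matrices of $d\sigma^3(2X_{\ell,1})$ in the basis $\{u_1, u_2, u_3\} = \{\xi_1^2, \xi_1\xi_2, \xi_2^2\}$ of $V^3$. The $\ell = 1$ contribution vanishes because $X_{1,1} = 0$. For $\ell = 2, 3$, the plan is to transport $X_{\ell,1}$ through $(\varpi_*)^{-1}: \so(3) \arrowsimeq \mathfrak{su}(2)$ by differentiating (\ref{expression-covering}) at the identity; a direct computation will yield explicit elements $Y_\ell := (\varpi_*)^{-1}(X_{\ell,1}) \in \mathfrak{su}(2)$, expected to be diagonal for $\ell = 2$ and a skew-symmetric quarter-rotation matrix for $\ell = 3$. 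Then $d\sigma^3(X_{\ell,1})$ acts on $V^3$ through the standard polynomial Lie algebra action $p(\xi) \mapsto -\sum_{i,j} (Y_\ell)_{ij} \xi_j \partial_{\xi_i} p(\xi)$; evaluating on $u_1, u_2, u_3$ produces the two non-trivial $3\times 3$ matrices, and assembling $\sum_\ell (d\sigma^3(2X_{\ell,1}))_{ss'}\partial_{\zeta_\ell}$ yields (\ref{Ms-vect}).

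The main obstacle will be purely combinatorial: one must fix consistent conventions for (i) whether the array $(d\sigma^3(Z))_{ss'}$ is indexed as (input, output) or as the standard (output, input), and (ii) the transpose (with sign) induced by passing from $d\sigma^3$ on $V^3$ to $d(\sigma^3)^\vee$ on $(V^3)^\vee$, since in (\ref{dpimu-scalar-vect-formulas}) the vector part is ultimately written through $d\sigma^3$ rather than its dual. With these conventions correctly matched, the off-diagonal entries in (\ref{Ms-vect}) come out with the right signs and factors of $2$; everything else is a routine unwinding of definitions.
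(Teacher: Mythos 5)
Your proposal is correct and follows essentially the same route as the paper: split via (\ref{dpimu-decomposition}) into scalar and vector parts, identify $\widehat{d\pi_{\lambda^*}}(C_1^+)$ as the standard second-order F-method operator (the paper simply cites \cite[Lem.~6.5]{kob-pev2} rather than rederiving it), and compute the matrices $d\sigma^3(2X_{\ell,1})$ through the covering map $\varpi$. The only cosmetic difference is that the paper lifts the one-parameter subgroups $e^{tX_{\ell,1}}$ to $SU(2)$ at the group level before differentiating, whereas you differentiate $\varpi$ first; both yield the same matrices.
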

\begin{proof}
The vector coefficients $M_s^\text{{\normalfont{scalar}}}$ come from the fact that
\begin{equation*}
\widehat{d\pi_{\lambda^*}}(C_1^+) = 2\lambda \frac{\partial}{\partial \zeta_1} + 2E_{\zeta}\frac{\partial}{\partial \zeta_1} - \zeta
_1 \Delta^\zeta_{\C^3},
\end{equation*}
which was proved in \cite[Lem. 6.5]{kob-pev2}. For the vector part $M_s^\text{{\normalfont{vect}}}$, first we need to compute the differential of $\sigma^3$ on the elements $2X_{\ell,1}$, $(\ell = 1,2,3)$. To that purpose, we make use of the isomorphism $\m \simeq \so(3,\C)$ and identify each element $X_{\ell,1}$ with $E_{1,\ell} - E_{\ell, 1} \in \so(3,\C)$ for $\ell = 1,2,3$. By a direct computation, one can verity that for $t \in \R$ one has
\begin{equation*}
\begin{aligned}
e^{tX_{1,1}} &= I_3 = \varpi(I_2),\\[4pt]
e^{tX_{2,1}} & = 
\begin{pmatrix}
\cos t & \sin t & 0\\
-\sin t & \cos t & 0\\
0 & 0 & 1
\end{pmatrix} = \varpi(U(e^{it/2},0)),\\[4pt]
e^{tX_{3,1}} & = 
\begin{pmatrix}
\cos t & 0 & \sin t\\
0 & 1 & 0\\[2pt]
-\sin t & 0 & \cos t
\end{pmatrix} = \varpi(U(\cos(t/2), -\sin(t/2)),
\end{aligned}
\end{equation*}
where $\varpi: SU(2) \twoheadrightarrow SO(3)$ is the covering map defined in (\ref{expression-covering}). Thus, we obtain
\renewcommand{\arraystretch}{0.8}
\begin{equation}
d\sigma^3(2X_{1,1}) = 0, \enspace d\sigma^3(2X_{2,1}) =
\begin{pmatrix}
-2i & 0 & 0 \\
0 & 0 & 0\\
0 & 0 & 2i
\end{pmatrix}, \enspace
d\sigma^3(2X_{3,1}) = \begin{pmatrix}
0 & 2 & 0\\
-1 & 0 & 1\\
0 & -2 & 0
\end{pmatrix}.
\end{equation}
Hence, the matrix $(A_{ss^\prime})$ is given by
\begin{equation*}
(A_{ss^\prime}) = \begin{pmatrix}
-2i \frac{\partial}{\partial \zeta_2} & 2 \frac{\partial}{\partial \zeta_3}& 0\\[6pt]
 -\frac{\partial}{\partial \zeta_3} & 0 & \frac{\partial}{\partial \zeta_3}\\[6pt]
 0 & -2 \frac{\partial}{\partial \zeta_3} & 2i \frac{\partial}{\partial \zeta_2}
\end{pmatrix}.
\end{equation*}
\end{proof}

\subsection{Explicit formul\ae{} for $M_s(\psi)$}
Recall from Lemma \ref{lemma-Harm} and Proposition \ref{Firststep-prop} that $\psi \in \Hom_{L^\prime}\left(V^3_\lambda, \C_{m, \nu} \otimes \Pol^a(\n_+)\right)$ is of the form
\begin{equation}\label{expression-psi}
\psi= \sum_{k=m-1}^{m+1}\left(T_{a-k} g_k\right)h_k^+ = \sum_{k=m-1}^{m+1}\left(T_{a-k} g_k\right)(\zeta_1 + i\zeta_2)^k u_{k-m+2}.
\end{equation}
We next give explicit formulas of the vector coefficients
\begin{equation}\label{decomposition-Ms}
M_s(\psi) = M_s^\text{{\normalfont{scalar}}}(\psi) + M_s^\text{{\normalfont{vect}}}(\psi) \quad (s =1,2,3),
\end{equation}
for $\psi$ in (\ref{expression-psi}), in terms of the polynomials $L_r(g_{m-1}, g_m, g_{m+1})$ ($r = 1, \ldots 6$) defined in (\ref{expression-operators-Lr}). To do so, we first recall the notion of the \emph{$T$-saturation} (cf. \cite[Sec. 3.2]{kob-pev2} and \cite[Sec. 6.5]{kkp}).

Recall from (\ref{Def-T_b}) that, given $\ell \in \N$,
\begin{equation*}
T_\ell: \C[t] \longrightarrow \C(\zeta_1, \zeta_2, \zeta_3)
\end{equation*}
is a linear map defined by
\begin{equation*}
(T_\ell g)(\zeta) := Q_2(\zeta^\prime)^\frac{\ell}{2} g\left(\frac{\zeta_3}{\sqrt{Q_2(\zeta^\prime)}}\right), \quad \text{where } Q_2(\zeta^\prime) := \zeta_1^2 + \zeta_2^2.
\end{equation*}

We say that a differential operator $D$ on $\C^3$ is $T$-\textbf{saturated} if there exists an operator $S$ on $\C[t]$ such that the following diagram commutes
\begin{equation*}
\begin{tikzcd}
\C[t]\arrow[r,"T_\ell"]\arrow[d, "S"'] & \C(\zeta_1, \zeta_2, \zeta_3)\arrow[d, "D"]\\
\C[t]\arrow[r, "T_\ell"] & \C(\zeta_1, \zeta_2, \zeta_3)
\end{tikzcd}
\end{equation*}
Such operator $S$ is unique as far as it exists and it will be denoted by $S = T_\ell^\sharp(D)$, namely, $T_\ell^\sharp$ satisfies
\begin{equation*}
T_\ell \circ T_\ell^\sharp(D) = D \circ T_\ell.
\end{equation*}
Note that the following multiplicative property is satisfied whenever it makes sense:
\begin{equation*}
T_\ell^\sharp(D_1 \cdot D_2) = T_\ell^\sharp(D_1) \cdot T_\ell^\sharp(D_2).
\end{equation*}

\begin{lemma}[{\cite[Lem 6.27]{kkp}}]\label{lemma-Tsaturation-formulas} Let $S_{\ell}^\mu$ be the imaginary Gegenbauer differential operator (see \text{{\normalfont{(\ref{Gegen-imaginary})}}}) and let $\vartheta_t = \frac{d}{dt}$ be the Euler operator. Then, for any $\ell \in \N$ and any $g \in \Pol_\ell[t]_\text{{\normalfont{even}}}$  (see \text{{\normalfont{(\ref{def-Pol_even})}}}), the following identities hold.
\begin{enumerate}[label=\normalfont{(\arabic*)}, topsep=6pt, itemsep=6pt]
\item $\displaystyle{T_\ell^\sharp\left(\frac{Q_2(\zeta^\prime)}{\zeta_1}\widehat{
d\pi_{\lambda^*}}(C_1^+)\right) = S_{\ell}^{\lambda-1}}$.
\item $\displaystyle{\left(T_\ell g\right)(\zeta) = Q_2(\zeta^\prime)\left(T_{\ell-2}g\right)(\zeta)}$.
\item $\displaystyle{\frac{\partial}{\partial \zeta_j}\left(T_\ell g\right)(\zeta) = \frac{\zeta_j}{Q_2(\zeta^\prime)}T_\ell\left((\ell-\vartheta_t)g\right)(\zeta)}$, for $j=1,2$.
\item $\displaystyle{\frac{\partial}{\partial \zeta_3}\left(T_\ell g\right)(\zeta)= T_{\ell-1}\left(\frac{dg}{dt}\right)(\zeta)}$.
\end{enumerate}
\end{lemma}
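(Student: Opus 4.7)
The plan is to prove the four identities in order, using (2)--(4) as the computational tools that reduce (1) to a purely one-variable identification. I start from the explicit formula
\begin{equation*}
(T_\ell g)(\zeta)=r^\ell\,g(\zeta_3/r),\qquad r:=\sqrt{Q_2(\zeta^\prime)},
\end{equation*}
and observe that for $g\in\Pol_\ell[t]_{\text{even}}$ the function $T_\ell g$ is a homogeneous polynomial of degree $\ell$; this homogeneity will be essential in~(1). Identity (2) is immediate since $Q_2(\zeta^\prime)=r^2$ only changes the exponent of $r$. Identity (4) follows from $\partial_{\zeta_3}(\zeta_3/r)=1/r$, giving $\partial_{\zeta_3}(T_\ell g)=r^{\ell-1}g'(\zeta_3/r)=T_{\ell-1}(g')$. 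For (3), the chain rule with $\partial_{\zeta_j}(\zeta_3/r)=-\zeta_j\zeta_3/r^3$ produces the two terms $\ell\zeta_j r^{\ell-2}g(\zeta_3/r)$ and $-\zeta_j\zeta_3 r^{\ell-3}g'(\zeta_3/r)$, which factor as $\zeta_j r^{\ell-2}\bigl(\ell g-\tfrac{\zeta_3}{r}g'\bigr)(\zeta_3/r)=\zeta_j\,Q_2(\zeta^\prime)^{-1}T_\ell((\ell-\vartheta_t)g)$.

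For (1), the strategy is to expand $\widehat{d\pi_{\lambda^*}}(C_1^+)=2\lambda\partial_{\zeta_1}+2E_\zeta\partial_{\zeta_1}-\zeta_1\Delta^\zeta_{\C^3}$ (from \cite[Lem. 6.5]{kob-pev2}), premultiply by $Q_2(\zeta^\prime)/\zeta_1$, and evaluate each summand on $T_\ell g$. The first two summands are handled by identity~(3) together with the homogeneity $E_\zeta(T_\ell g)=\ell\,T_\ell g$: since $\partial_{\zeta_1}T_\ell g$ is homogeneous of degree $\ell-1$, one obtains
\begin{equation*}
\frac{Q_2(\zeta^\prime)}{\zeta_1}\bigl(2\lambda\partial_{\zeta_1}+2E_\zeta\partial_{\zeta_1}\bigr)T_\ell g=2(\lambda+\ell-1)\,T_\ell\bigl((\ell-\vartheta_t)g\bigr).
\end{equation*}
For the Laplacian piece, I would compute $(\partial_{\zeta_1}^2+\partial_{\zeta_2}^2)T_\ell g$ by differentiating the formula from~(3) once more and summing over $j=1,2$; the factor $\zeta_1^2+\zeta_2^2=r^2$ collapses the expression into $r^{\ell-2}\bigl[\ell^2 g(s)-(2\ell-1)sg'(s)+s^2g''(s)\bigr]$ with $s=\zeta_3/r$, and adding $\partial_{\zeta_3}^2 T_\ell g=r^{\ell-2}g''(s)$ gives
\begin{equation*}
-Q_2(\zeta^\prime)\Delta^\zeta_{\C^3}(T_\ell g)=T_\ell\bigl(-\ell^2 g+(2\ell-1)\vartheta_t g-(1+t^2)g''\bigr).
\end{equation*}

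Summing the two contributions and collecting the coefficients of $g$, $\vartheta_t g$, and $(1+t^2)g''$ yields
\begin{equation*}
\frac{Q_2(\zeta^\prime)}{\zeta_1}\widehat{d\pi_{\lambda^*}}(C_1^+)(T_\ell g)=T_\ell\bigl(-(1+t^2)g''-(2\lambda-1)tg'+\ell(\ell+2\lambda-2)g\bigr),
\end{equation*}
and the polynomial on the right is exactly $S_\ell^{\lambda-1}g$ in view of the defining formula of the imaginary Gegenbauer operator with $\mu=\lambda-1$. Uniqueness of the $T$-saturation then gives~(1). I expect the main obstacle to be the bookkeeping in the Laplacian computation: the three second derivatives contribute expressions with different powers of $r$ and mixed $s$, $s^2$ factors, and it is easy to miscount the coefficient of $sg'$; any algebra slip there breaks the clean match with $S_\ell^{\lambda-1}$ in the final step.
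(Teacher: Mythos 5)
Your proof is correct. The paper itself gives no argument for this lemma --- it is quoted verbatim from \cite[Lem.~6.27]{kkp} --- so your direct verification from the formula $(T_\ell g)(\zeta)=r^{\ell}g(\zeta_3/r)$ is a legitimate self-contained substitute. I checked the delicate step: summing $\partial_{\zeta_j}^2 T_\ell g$ over $j=1,2$ does give $r^{\ell-2}\bigl[\ell^2 g-(2\ell-1)sg'+s^2g''\bigr]$, and combining with $2(\lambda+\ell-1)T_\ell((\ell-\vartheta_t)g)$ yields $-(1+t^2)g''-(2\lambda-1)tg'+\ell(\ell+2\lambda-2)g=S_\ell^{\lambda-1}g$, as required; the use of $E_\zeta\partial_{\zeta_1}T_\ell g=(\ell-1)\partial_{\zeta_1}T_\ell g$ is justified because $T_\ell g$ is a homogeneous polynomial of degree $\ell$ for $g\in\Pol_\ell[t]_{\text{even}}$. (Note in passing that the lemma's statement misprints the Euler operator as $\vartheta_t=\frac{d}{dt}$; you correctly use $\vartheta_t=t\frac{d}{dt}$ throughout.)
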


\begin{lemma}\label{lemma-formulae-Ms-final} Let $\psi = \sum_{k = m-1}^{m+1}(T_{a-k}g_k)h_k^+$. Then, the vector coefficients $M_s(\psi) = M_s^\text{{\normalfont{scalar}}}(\psi) + M_s^\text{{\normalfont{vect}}}(\psi)$ $(s=1,2,3)$ are given as follows.
\begin{equation*}
\begin{aligned}
M_1(\psi) = (&\zeta_1 + i\zeta_2)^{m-2} \bigg[\zeta_1^2 T_{a-m-1}\left(S_{a-m+1}^{\lambda - 1}g_{m-1} + 2\frac{d}{dt}g_m\right)\\
&+ \zeta_2^2T_{a-m-1}\left( 2(a-m+1 -\vartheta_t) g_{m-1} - 2 \frac{d}{dt}g_m \right) \\
&+ \zeta_1 \zeta_2 T_{a-m -1} \left(iS_{a-m+1}^{\lambda-1}g_{m-1} - 2i (a-m+1 - \vartheta_t)g_{m-1} + 4i \frac{d}{dt}g_m \right) \\
&+ T_{a-m+1}\left(2(m-1)(\lambda+a)g_{m-1}\right)\bigg],\\[5pt]
M_2(\psi) = (&\zeta_1 + i \zeta_2)^{m-1} \bigg[\zeta_1^2 T_{a-m-2}\left( S_{a-m}^{\lambda-1} g_m + \frac{d}{dt}g_{m+1}\right) - \zeta_2^2 T_{a-m-2}\left(\frac{d}{dt}g_{m+1} \right) \\
&+ \zeta_1 \zeta_2 T_{a-m-2}\left(iS_{a-m}^{\lambda-1}g_m + 2i \frac{d}{dt}g_{m+1} \right) + T_{a-m}\left(2m(\lambda + a -1) g_m - \frac{d}{dt}g_{m-1} \right) \bigg],\\[5pt]
M_3(\psi) = (&\zeta_1 + i \zeta_2)^m \bigg[\zeta_1^2 T_{a-m-3}\left(S_{a-m-1}^{\lambda-1}g_{m+1}\right)- \zeta_2^2 T_{a-m-3}\left(2(a-m-1-\vartheta_t)g_{m+1}\right) \\
& + \zeta_1\zeta_2 T_{a-m-3}\left(iS_{a-m-1}^{\lambda-1}g_{m+1} + 2i(a - m -1 -\vartheta_t)g_{m+1} \right)\\
&+ T_{a-m-1}\left(2(m+1)(\lambda + a-2)g_{m+1} - 2\frac{d}{dt}g_m \right) \bigg].
\end{aligned}
\end{equation*}
\end{lemma}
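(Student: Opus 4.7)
The plan is a direct computation combining the explicit operator formulas of Lemma \ref{lemma-coefficients-Ms-scalar-vector} with the $T$-saturation identities of Lemma \ref{lemma-Tsaturation-formulas}. First I would record the components of the Ansatz: for $s=1,2,3$, $\psi_s:=\psi(u_s)=(\zeta_1+i\zeta_2)^{m-2+s}\,T_{a-m+2-s}(g_{m-2+s})$. Two observations drive everything. First, $\zeta_1+i\zeta_2$ is null for the flat Laplacian on the $(\zeta_1,\zeta_2)$-plane, so
\[
\Delta^\zeta_{\C^3}\bigl[(\zeta_1+i\zeta_2)^k f\bigr]=(\zeta_1+i\zeta_2)^k\Delta^\zeta_{\C^3}f+2k(\zeta_1+i\zeta_2)^{k-1}(\partial_{\zeta_1}+i\partial_{\zeta_2})f.
\]
Second, the Euler operator $E_\zeta$ acts on $(\zeta_1+i\zeta_2)^k$ as multiplication by $k$.

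For the scalar contribution I would apply $\widehat{d\pi_{\lambda^*}}(C_1^+)=2(\lambda+E_\zeta)\partial_{\zeta_1}-\zeta_1\Delta^\zeta_{\C^3}$ to $\psi_s$ by Leibniz. The piece that keeps $(\zeta_1+i\zeta_2)^k$ intact is $(\zeta_1+i\zeta_2)^k\,\widehat{d\pi_{\lambda^*}}(C_1^+)(T_{a-k}g_k)$, which, after multiplying and dividing by $Q_2(\zeta')/\zeta_1$, becomes $\tfrac{\zeta_1}{Q_2(\zeta')}(\zeta_1+i\zeta_2)^k\,T_{a-k}(S^{\lambda-1}_{a-k}g_k)$ by part (1) of Lemma \ref{lemma-Tsaturation-formulas}. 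The remaining terms, each carrying one factor $k(\zeta_1+i\zeta_2)^{k-1}$, reduce to $T$-images of $g_k$ and $\tfrac{d}{dt}g_k$ via parts (3)--(4) of the same lemma.

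The vector-part matrix in $(\ref{Ms-vect})$ contains only $\partial_{\zeta_2}$ and $\partial_{\zeta_3}$, and is handled by the identical Leibniz-plus-Lemma \ref{lemma-Tsaturation-formulas} recipe. Summing the scalar and vector contributions, I would regroup by the predicted power of $(\zeta_1+i\zeta_2)$ ($m-2$ for $M_1$, $m-1$ for $M_2$, $m$ for $M_3$) and eliminate the spurious factors $1/Q_2(\zeta')$ by part (2) of Lemma \ref{lemma-Tsaturation-formulas}, which exchanges $Q_2(\zeta')\cdot T_{\ell-2}$ for $T_\ell$; this is what produces the monomials $\zeta_1^2,\zeta_2^2,\zeta_1\zeta_2$ together with the lone $T_{a-m+1}$-term in each $M_s(\psi)$.

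The main obstacle is purely bookkeeping: three equations, each coupling all of $g_{m-1},g_m,g_{m+1}$ through first- and second-order operators, and every final coefficient must consolidate into one of the operators $S^{\lambda-1}_\ell$, $S^\lambda_{\ell-1}$ or $\frac{d}{dt}$ displayed in the target formulas. The delicate point is the interference between the first-order contributions from $E_\zeta$ acting on $(\zeta_1+i\zeta_2)^k$ and the factor $(\ell-\vartheta_t)$ produced by part (3) of Lemma \ref{lemma-Tsaturation-formulas}; their combination is what supplies the coefficients $2(m-1)(\lambda+a)$, $2m(\lambda+a-1)$ and $2(m+1)(\lambda+a-2)$ visible in the $T_{a-m+1}$-pieces of $M_1, M_2, M_3$ respectively.
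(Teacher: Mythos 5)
Your proposal is correct and follows essentially the same route as the paper: write $\psi_s$ in coordinates, apply the explicit scalar/vector formulas of Lemma \ref{lemma-coefficients-Ms-scalar-vector}, reduce everything to $T$-images via Lemma \ref{lemma-Tsaturation-formulas}, and regroup by powers of $(\zeta_1+i\zeta_2)$ and the monomials $\zeta_1^2,\zeta_2^2,\zeta_1\zeta_2$. The only difference is that where you rederive the Leibniz expansion of $\widehat{d\pi_{\lambda^*}}(C_1^+)$ on the product $(T_{a-k}g_k)(\zeta_1+i\zeta_2)^k$ from harmonicity and homogeneity, the paper simply invokes the ready-made product formula of \cite[Lem.~4.6]{kkp}; both yield the same two surviving terms once the angular-derivative contributions cancel by rotational invariance of $T_{a-k}g_k$.
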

\begin{proof}
We prove the lemma for the first vector coefficient $M_1$. The other two can be computed in a similar fashion.
By the expressions of the generators $h_k^+$ (see (\ref{generators-Hom(V3,CmHk)})), we can write $\psi$ in coordinates as follows:
\begin{equation*}
\psi = \begin{pmatrix}
\displaystyle{(T_{a-m+1}g_{m-1})(\zeta) (\zeta_1 + i\zeta_2)^{m-1}}\\[4pt]
\displaystyle{(T_{a-m}g_{m})(\zeta) (\zeta_1+ i\zeta_2)^{m}}\\[4pt]
\displaystyle{(T_{a-m-1}g_{m+1})(\zeta) (\zeta_1 + i\zeta_2)^{m+1}}
\end{pmatrix}.
\end{equation*}

On the other hand, by \cite[Lem. 4.6]{kkp} we obtain the following formula:
\begin{equation*}
\widehat{d\pi_{\lambda^*}}(C_1^+)\left((T_{a-k}g_k)h_k^+\right) = \widehat{d\pi_{\lambda^*}}(C_1^+)(T_{a-k}g_k)h_k^+ + 2(\lambda + a -1)(T_{a-k}g_k)\frac{\partial h_k^+}{\partial \zeta_1}.
\end{equation*}
Now, by (\ref{Ms-scalar}) and Lemma \ref{lemma-Tsaturation-formulas} (1), we obtain the following expression of $M_1^\text{{\normalfont{scalar}}}$
\begin{equation*}
\begin{aligned}
M_1^\text{{\normalfont{scalar}}}(\psi) = &\enspace \frac{\zeta_1}{Q_2(\zeta^\prime)}T_{a-m+1}\left(S_{a-m+1}^{\lambda-1}g_{m-1} \right)(\zeta_1 +i\zeta_2)^{m-1} \\[4pt]
&+ 2(m-1)(\lambda + a -1)(\zeta_1 + i \zeta_2)^{m-2}\left(T_{a-m+1}g_{m-1} \right),
\end{aligned}
\end{equation*}
and by (\ref{Ms-vect}) the following one of $M_1^\text{{\normalfont{vect}}}$:
\begin{equation*}
M_1^\text{{\normalfont{vect}}}(\psi) = -2i\frac{\partial}{\partial \zeta_2} \left[\left(T_{a-m+1}g_{m-1}\right)(\zeta_1 +i\zeta_2)^{m-1} \right] + 2\frac{\partial}{\partial \zeta_3} \left(T_{a-m}g_m \right)(\zeta_1 + i\zeta_2)^m.
\end{equation*}

Simplifying the expressions above by using Lemma \ref{lemma-Tsaturation-formulas} leads to
\begin{equation*}
M_1^\text{{\normalfont{scalar}}}(\psi) = \zeta_1(\zeta_1 +i\zeta_2)^{m-1} T_{a-m-1}\left(S_{a-m+1}^{\lambda-1}g_{m-1} \right) + 2(m-1)(\lambda + a -1)(\zeta_1 + i \zeta_2)^{m-2}\left(T_{a-m+1}g_{m-1} \right),
\end{equation*}
and
\begin{equation*}
\begin{aligned}
M_1^\text{{\normalfont{vect}}}(\psi) = & -2i\frac{\partial}{\partial \zeta_2} \left(T_{a-m+1}g_{m-1}\right)(\zeta_1 + i\zeta_2)^{m-1} \\
&+2(m-1)(\zeta_1 + i\zeta_2)^{m-2}\left(T_{a-m+1} g_{m-1}\right)
+ 2\frac{\partial}{\partial \zeta_3} \left(T_{a-m}g_m \right)(\zeta_1 + i\zeta_2)^m \\
= & -2i\frac{\zeta_2}{Q_2(\zeta^\prime)}T_{a-m+1}\left((a - m + 1 -\vartheta_t)g_{m-1}\right)(\zeta_1 + i \zeta_2)^{m-1} \\
&+2(m-1)(\zeta_1 + i\zeta_2)^{m-2}\left(T_{a-m+1} g_{m-1}\right) + 2T_{a-m-1}\left(\frac{d}{dt}g_m\right)(\zeta_1 + i\zeta_2)^m \\
=& -2i \zeta_2 (\zeta_1 + i \zeta_2)^{m-1} T_{a-m-1}\left((a - m + 1 -\vartheta_t)g_{m-1}\right) \\
&+2(m-1)(\zeta_1 + i\zeta_2)^{m-2}\left(T_{a-m+1} g_{m-1}\right)
+ 2(\zeta_1 + i\zeta_2)^mT_{a-m-1}\left(\frac{d}{dt}g_m\right),
\end{aligned}
\end{equation*}
where we applied Lemma \ref{lemma-Tsaturation-formulas} (2) for $M_1^\text{{\normalfont{scalar}}}$, and Lemma \ref{lemma-Tsaturation-formulas} (3), (4) and (2) in the last two equalities of $M_1^\text{{\normalfont{vect}}}$ respectively. Now, combining the scalar and vector parts of $M_1$ we obtain
\begin{equation*}
\begin{aligned}
M_1(\psi) = M_1^\text{{\normalfont{scalar}}}(\psi) + M_1^\text{{\normalfont{vect}}}(\psi) = &\enspace (\zeta_1 + i\zeta_2)^{m-2} \Big[\zeta_1(\zeta_1 + i\zeta_2)
T_{a-m-1}\left(S_{a-m+1}^{\lambda - 1} g_{m-1}\right) \\
& +T_{a-m+1}\left(2(m-1)(\lambda+a)g_{m-1}\right)\\
&- i \zeta_2 (\zeta_1 + i\zeta_2)T_{a-m-1}\left(2(a-m+1-\vartheta_t)g_{m-1}\right) \\
&+ (\zeta_1 + i \zeta_2)^2T_{a-m-1}\left(2\frac{d}{dt}g_m\right)\Big].
\end{aligned}
\end{equation*}

A re-arrangement of the terms above gives the desired formula.
\end{proof}

\subsection{Proof of Theorem \ref{Thm-findingequations}}\label{section-proof_of_finding_equations-subsection}
In this subsection we prove Theorem \ref{Thm-findingequations}. The following lemma will play a key role.

\begin{lemma} \label{lemma-invariant-polynomials} Suppose that $p_j\in \Pol(\C^3)$ for $j=1, \ldots, 4$ are $O(2,\C)$-invariant polynomials with respect to the action given by {\normalfont{(\ref{action-SO(2)-on-Pol})}}. Then, the following two conditions on $p_j$ are equivalent:
\begin{enumerate}[label=\normalfont{(\roman*)}, topsep=0pt]
\item $\zeta_1^2 p_1 + \zeta_2^2 p_2 + \zeta_1\zeta_2 p_3 + p_4 = 0$.
\item $p_1 = p_2$, $p_3 = 0$, and $Q_2(\zeta^\prime) p_1 + p_4 = 0$.
\end{enumerate}
\end{lemma}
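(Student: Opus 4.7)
The direction (ii) $\Rightarrow$ (i) is immediate: substituting $p_2=p_1$, $p_3=0$, and $p_4=-Q_2(\zeta^\prime)p_1$ into the left-hand side of (i) gives $(\zeta_1^2+\zeta_2^2-Q_2(\zeta^\prime))p_1=0$. So the real content is the implication (i) $\Rightarrow$ (ii), and the plan is to extract the three conclusions by applying three carefully chosen elements of $O(2,\C)$ to the equation.

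The key observation is that, under the action (\ref{action-SO(2)-on-Pol}), the linear span $\C\zeta_1^2\oplus\C\zeta_2^2\oplus\C\zeta_1\zeta_2\oplus\C\cdot 1$ is an $O(2,\C)$-submodule of $\Pol(\C^3)$, and the monomials $\zeta_1^2,\zeta_2^2,\zeta_1\zeta_2,1$ transform in a way we can track explicitly on three particular elements of $O(2,\C)$. Since by assumption each $p_j$ is $O(2,\C)$-invariant, pulling back the relation in (i) by an element $g\in O(2,\C)$ yields another identity whose coefficients differ from those of (i) only through the action on $\zeta_1^2,\zeta_2^2,\zeta_1\zeta_2$.

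First I would apply the reflection $\sigma:(\zeta_1,\zeta_2,\zeta_3)\mapsto(\zeta_1,-\zeta_2,\zeta_3)$, which lies in $O(2,\C)$. The only monomial among $\zeta_1^2,\zeta_2^2,\zeta_1\zeta_2$ that changes sign is $\zeta_1\zeta_2$, so $\sigma^\ast$ applied to (i) gives
\begin{equation*}
\zeta_1^2 p_1+\zeta_2^2 p_2-\zeta_1\zeta_2 p_3+p_4=0.
\end{equation*}
Subtracting from (i) yields $2\zeta_1\zeta_2 p_3=0$, whence $p_3=0$. Next I would apply the rotation $\rho:(\zeta_1,\zeta_2,\zeta_3)\mapsto(-\zeta_2,\zeta_1,\zeta_3)$, which swaps $\zeta_1^2$ and $\zeta_2^2$. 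Applied to (i) with $p_3=0$ this produces $\zeta_2^2 p_1+\zeta_1^2 p_2+p_4=0$, and subtracting from the $p_3=0$ version of (i) gives $(\zeta_1^2-\zeta_2^2)(p_1-p_2)=0$, hence $p_1=p_2$. Substituting $p_1=p_2$ and $p_3=0$ back into (i) gives $Q_2(\zeta^\prime)p_1+p_4=0$.

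No step requires real work beyond bookkeeping; the main (mild) point to verify is simply that $\sigma$ and $\rho$ belong to the group $O(2,\C)$ under which the $p_j$ are invariant, which is clear since both matrices already lie in $O(2,\R)\subset O(2,\C)$. Thus the implication (i) $\Rightarrow$ (ii) follows, and the lemma is proved.
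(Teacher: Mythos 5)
Your proof is correct and follows essentially the same strategy as the paper: pull back the identity by two suitably chosen elements of $O(2,\C)$ (you use the reflection $\diag(1,-1)$ and a $90^\circ$ rotation; the paper uses the swap $E_{1,2}+E_{2,1}$ and the same rotation) and subtract to isolate $p_3=0$ and $p_1=p_2$, with the last identity following by substitution. The order of the two steps and the exact group elements differ, but this is only cosmetic.
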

\begin{proof}
The direction (ii) $\Rightarrow$ (i) is trivial. We prove (i) $\Rightarrow$ (ii). Suppose (i), then 
\begin{equation}\label{equation-lemma-invariant-polynomials}
\zeta_1^2 p_1 + \zeta_2^2 p_2 + \zeta_1\zeta_2 p_3  = -p_4 \in \Pol(\C^3)^{O(2,\C)}.
\end{equation}
We denote by $\cdot$ the natural action of $O(2,\C)$ on $\Pol(\C^3)$. 
Take $B_1 = E_{1,2} + E_{2,1}
\in O(2,\C)$, and take invariants with respect to $B_1$ in both sides of (\ref{equation-lemma-invariant-polynomials}). We obtain:
\begin{equation*}
\begin{gathered}
B_1 \cdot ( \zeta_1^2 p_1 + \zeta_2^2 p_2 + \zeta_1\zeta_2 p_3 )= B_1 \cdot (-p_4) = - p_4\\
\Longrightarrow \zeta_2^2 p_1 + \zeta_1^2 p_2 + \zeta_1\zeta_2 p_3 = \zeta_1^2 p_1 + \zeta_2^2 p_2 + \zeta_1\zeta_2 p_3\\
\Longrightarrow (\zeta_1^2 - \zeta_2^2)(p_2 - p_1) = 0.
\end{gathered}
\end{equation*}
Thus $p_1 = p_2$. 

On the other hand, if we take invariants with respect to
$B_2 := -E_{1,2}+E_{2,1}
\in O(2,\C)$
in both sides of (\ref{equation-lemma-invariant-polynomials}), we have
\begin{equation*}
\begin{gathered}
B_2 \cdot ( (\zeta_1^2 + \zeta_2^2)p_1 + \zeta_1\zeta_2p_3) = B_2 \cdot (-p_4) = -p_4\\
\Longrightarrow (\zeta_1^2 + \zeta_2^2)p_1 - \zeta_1\zeta_2p_3 = (\zeta_1^2 + \zeta_2^2)p_1 + \zeta_1\zeta_2p_3\\
\Longrightarrow 2\zeta_1\zeta_2p_3 = 0.
\end{gathered}
\end{equation*}
Hence, $p_3 = 0$. Finally, the identity $Q_2(\zeta^\prime)p_1 + p_4 = 0$ follows from $p_1 = p_2$ and $p_3 = 0$.
\end{proof}
Let $L_r(g_{m-1}, g_m, g_{m+1})(t) \enspace (r=1, \ldots, 6)$ be the polynomials defined in (\ref{expression-operators-Lr}) with $(f_0, f_1, f_2) = (g_{m-1}, g_m, g_{m+1})$. The following proposition leads to the proof of Theorem \ref{Thm-findingequations}.





\begin{prop}\label{prop-equivalence-Ms-Lr}
Let $M_s$ be the vector coefficients defined in {\normalfont{(\ref{decomposition-Ms})}}. Then, the following hold.
\begin{enumerate}[label=\normalfont{(\arabic*)}, topsep=0pt]
\item $M_1 = 0 \Leftrightarrow L_r(g_{m-1}, g_m, g_{m+1}) = 0$, for $r = 1,5$.
\item $M_2 = 0 \Leftrightarrow L_r(g_{m-1}, g_m, g_{m+1}) = 0$, for $r = 3,4$.
\item $M_3 = 0 \Leftrightarrow L_r(g_{m-1}, g_m, g_{m+1}) = 0$, for $r = 2,6$.
\end{enumerate}
\end{prop}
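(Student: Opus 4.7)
The plan is to reduce each condition $M_s(\psi)=0$ to a pair of equations on $(g_{m-1},g_m,g_{m+1})$ by applying Lemma \ref{lemma-invariant-polynomials} to the explicit formul\ae{} of Lemma \ref{lemma-formulae-Ms-final}, and then to recognise these equations as the vanishing of two of the operators $L_r$. I focus on $M_1$; the arguments for $M_2$ and $M_3$ are parallel, simply using a different pair of indices $r$.

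First I would observe that, by Lemma \ref{lemma-formulae-Ms-final}, $M_1(\psi)=(\zeta_1+i\zeta_2)^{m-2}\bigl(\zeta_1^2 p_1+\zeta_2^2 p_2+\zeta_1\zeta_2 p_3+p_4\bigr)$, where
\begin{align*}
p_1 &= T_{a-m-1}\!\bigl(S_{a-m+1}^{\lambda-1}g_{m-1}+2\tfrac{d}{dt}g_m\bigr),\\
p_2 &= T_{a-m-1}\!\bigl(2(a-m+1-\vartheta_t)g_{m-1}-2\tfrac{d}{dt}g_m\bigr),\\
p_3 &= iT_{a-m-1}\!\bigl(S_{a-m+1}^{\lambda-1}g_{m-1}-2(a-m+1-\vartheta_t)g_{m-1}+4\tfrac{d}{dt}g_m\bigr),\\
p_4 &= T_{a-m+1}\!\bigl(2(m-1)(\lambda+a)g_{m-1}\bigr).
\end{align*}
Each $p_j$ is a polynomial in $Q_2(\zeta')=\zeta_1^2+\zeta_2^2$ and $\zeta_3$, so in particular is $O(2,\C)$-invariant. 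Dividing out the non-zero factor $(\zeta_1+i\zeta_2)^{m-2}$ (over $\C[\zeta_1,\zeta_2,\zeta_3]$ this is legitimate after projecting away the kernel, or alternatively one argues directly on $\C^3$ away from $\zeta_1+i\zeta_2=0$), Lemma \ref{lemma-invariant-polynomials} transforms $M_1(\psi)=0$ into the three conditions $p_1=p_2$, $p_3=0$, and $Q_2(\zeta')p_1+p_4=0$.

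Next I would strip off the maps $T_\ell$ using the injectivity of $T_\ell$ on $\Pol_\ell[t]_{\mathrm{even}}$ (see \eqref{bijection-T_b}), together with the identity $Q_2(\zeta')\,T_{a-m-1}=T_{a-m+1}$ coming from Lemma \ref{lemma-Tsaturation-formulas}(2). A direct inspection shows that $p_1=p_2$ and $p_3=0$ yield the \emph{same} scalar equation, namely
\begin{equation*}
(\mathrm{E}_1):\quad S_{a-m+1}^{\lambda-1}g_{m-1}-2(a-m+1-\vartheta_t)g_{m-1}+4\tfrac{d}{dt}g_m=0,
\end{equation*}
while $Q_2(\zeta')p_1+p_4=0$ gives
\begin{equation*}
(\mathrm{E}_2):\quad S_{a-m+1}^{\lambda-1}g_{m-1}+2\tfrac{d}{dt}g_m+2(m-1)(\lambda+a)g_{m-1}=0.
\end{equation*}
Subtracting $(\mathrm{E}_2)$ from $(\mathrm{E}_1)$ eliminates the Gegenbauer operator and, after collecting terms, produces precisely $2L_5(g_{m-1},g_m,g_{m+1})=0$.

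The main obstacle, and the core of the argument, is to show that $(\mathrm{E}_2)$ together with $L_5=0$ is equivalent to $L_1=S_{a+m-1}^{\lambda}g_{m-1}=0$. For this I would use the representation
\begin{equation*}
S_\ell^\mu=-\tfrac{d^2}{dt^2}+(\ell-\vartheta_t)(\ell+2\mu+\vartheta_t),
\end{equation*}
which is obtained by evaluating both sides on $t^j$. A short computation then gives the key identity
\begin{equation*}
S_{a+m-1}^{\lambda}-S_{a-m+1}^{\lambda-1}=4am+4m\lambda-4\lambda-2a-2m+2-2\vartheta_t.
\end{equation*}
Substituting $\tfrac{d}{dt}g_m=\bigl(m(\lambda+a-1)-\lambda+1-\vartheta_t\bigr)g_{m-1}$ (from $L_5=0$) into $(\mathrm{E}_2)$ and comparing the resulting polynomial coefficient with the right-hand side of the identity above shows that the lower-order parts cancel exactly, leaving $S_{a+m-1}^{\lambda}g_{m-1}=0$, i.e.\ $L_1=0$. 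The reverse implication is by the same computation read backwards. This proves part~(1).

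For part~(2) I would carry out the parallel analysis for $M_2$: here $p_1=p_2$ and $p_3=0$ both collapse to $L_4=0$, and $Q_2(\zeta')p_1+p_4=0$ combined with $L_4=0$ becomes $L_3=0$ after invoking the simpler identity $S_{a+m}^{\lambda-1}-S_{a-m}^{\lambda-1}=4m(a+\lambda-1)$. Part~(3) for $M_3$ is entirely analogous, with the roles of $g_{m-1}$ and $g_{m+1}$ swapped: the two identical conditions yield $L_2=0$ (from $S_{a-m-1}^{\lambda}$ acting on $g_{m+1}$) and the last condition combined with $L_2=0$ gives $L_6=0$. Throughout, the only real bookkeeping is the matching of the Euler-operator coefficients coming from shifting the parameters of $S_\ell^\mu$, and the computation performed for $M_1$ serves as a template.
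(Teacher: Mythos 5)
Your proposal is correct and follows essentially the same route as the paper: apply Lemma \ref{lemma-invariant-polynomials} to the explicit formul\ae{} of Lemma \ref{lemma-formulae-Ms-final}, strip off the bijections $T_\ell$, and then identify the resulting pair of scalar equations (your $(\mathrm{E}_1)$, $(\mathrm{E}_2)$ are exactly the paper's auxiliary operators $L_1'$, $L_2'$) with the pair $\{L_r=0\}$ via linear relations. The only difference is presentational — you verify the needed operator identities (e.g.\ $S_{a+m-1}^{\lambda}-S_{a-m+1}^{\lambda-1}$ and $S_{a+m}^{\lambda-1}-S_{a-m}^{\lambda-1}$) by direct substitution, whereas the paper tabulates the six $L_r'$ and the linear relations $L_1=2L_2'-L_1'$, $2L_5=L_1'-L_2'$, etc. — and your computations check out.
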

\begin{proof}[Proof of Proposition \ref{prop-equivalence-Ms-Lr}]
First, we observe that
\begin{equation*}
L_r(g_{m-1},g_m, g_{m+1}) = 0 \enspace (\forall r=1, \ldots 6) \enspace \Leftrightarrow \enspace L_r^\prime(g_{m-1},g_m, g_{m+1}) = 0 \enspace (\forall r=1, \ldots 6),
\end{equation*}
where the operators $L_r^\prime$ are defined as follows:
\begin{equation*}
\begin{array}{l}
L_1^\prime(g_{m-1}, g_m, g_{m+1})(t) := S_{a-m+1}^{\lambda - 1}g_{m-1} -2(a-m+1 - \vartheta_t)g_{m-1} + 4\frac{d}{dt}g_m.\\[0.3cm]
 
L_2^\prime(g_{m-1}, g_m, g_{m+1})(t) := S_{a-m+1}^{\lambda - 1}g_{m-1} + 2(m-1)(\lambda + a)g_{m-1}
 +2\frac{d}{dt}g_m.\\[0.3cm]

L_3^\prime(g_{m-1}, g_m, g_{m+1})(t) := S_{a-m}^{\lambda - 1}g_m +2\frac{d}{dt}g_{m+1}.\\[0.3cm]

L_4^\prime(g_{m-1}, g_m, g_{m+1})(t) := S_{a-m}^{\lambda-1}g_m + 2m(\lambda + a-1)g_m +\frac{d}{dt}g_{m+1} - \frac{d}{dt}g_{m-1}.\\[0.3cm]

L_5^\prime(g_{m-1}, g_m, g_{m+1})(t) := S_{a-m-1}^{\lambda-1}g_{m+1} + 2(a-m-1-\vartheta_t)g_{m+1}.\\[0.3cm]

L_6^\prime(g_{m-1}, g_m, g_{m+1})(t) := S_{a-m-1}^{\lambda-1}g_{m+1} + 2(m+1)(\lambda + a -2)g_{m+1} - 2 \frac{d}{dt}g_m.
\end{array}
\end{equation*}
In fact, this a consequence of the linear relations below, which can be proved easily.
\begin{equation*}
\begin{cases}
L_1 = 2L_2^\prime - L_1^\prime,\\
L_2 = L_5^\prime,
\end{cases}
\begin{cases}
 L_3 = 2L_4^\prime - L_3^\prime,\\
 L_4 = L_3^\prime,
\end{cases}
\begin{cases}
2L_5 = L_1^\prime-L_2^\prime,\\
2L_6 = L_6^\prime - L_5^\prime.
\end{cases}
\end{equation*}

We prove the first equivalence, the others can be proved in the same manner. From the observation above, it suffices to show $M_1 = 0 \Leftrightarrow L_r^\prime(g_{m-1}, g_m, g_{m+1}) = 0$, for $r = 1,2$. \\
We set
\begin{equation*}
\begin{aligned}
p_1 &:= T_{a-m-1}\left(S_{a-m+1}^{\lambda-1}g_{m-1} + 2\frac{d}{dt}g_m\right),\\
p_2 &:=  T_{a-m-1}\left(2(a-m+1-\vartheta_t)g_{m-1}-2\frac{d}{dt}g_m\right),\\
p_3 &:=  T_{a-m-1}\left(iS_{a-m+1}^{\lambda-1}g_{m-1} - 2i(a - m +1 -\vartheta_t)g_{m-1} +4i\frac{d}{dt}g_m \right) = i(p_1-p_2),\\
p_4 &:= T_{a-m+1}\left(2(m-1)(\lambda + a)g_{m-1}\right).
\end{aligned}
\end{equation*}

From the expression of $M_1$ obtained in Lemma \ref{lemma-formulae-Ms-final} we deduce that 
\begin{equation*}
M_1 = 0 \Leftrightarrow \zeta_1^2 p_1 + \zeta_2^2 p_2 + \zeta_1 \zeta_2 p_3 + p_4 = 0,
\end{equation*}
and by Lemma \ref{lemma-invariant-polynomials}, this is equivalent to
\begin{equation*}
p_1 - p_2 = p_3 = \enspace Q_2(\zeta^\prime)p_1 + p_4 = 0,
\end{equation*}
in other words, to
\begin{equation*}
\begin{aligned}
&T_{a-m-1}\left(S_{a-m+1}^{\lambda-1}g_{m-1} - 2(a - m +1 -\vartheta_t)g_{m-1} + 4\frac{d}{dt}g_m \right) &= 0,\\
& Q_2(\zeta^\prime)T_{a-m-1}\left(S_{a-m+1}^{\lambda-1}g_{m-1}+ 2\frac{d}{dt}g_m\right) +  T_{a-m+1}\left(2(m-1)(\lambda + a)g_{m-1}\right) &= 0.
\end{aligned}
\end{equation*}

Since $T_\ell$ is a bijection (see (\ref{bijection-T_b})), and by Lemma \ref{lemma-Tsaturation-formulas} (2), the two equations above are equivalent to
\begin{equation*}
\begin{aligned}
&S_{a-m+1}^{\lambda-1}g_{m-1} - 2(a-m+1-\vartheta_t)g_{m-1} + 4\frac{d}{dt}g_m &= 0,\\
&S_{a-m+1}^{\lambda-1}g_{m-1} + 2(m-1)(\lambda + a)g_{m-1} + 2\frac{d}{dt}g_m &= 0,
\end{aligned}
\end{equation*}
which are nothing but $L_1^\prime(g_{m-1},g_m,g_{m+1}) = L_2^\prime(g_{m-1},g_m,g_{m+1}) = 0$. Thus, the equivalence $M_1 = 0 \Leftrightarrow L_r^\prime(g_{m-1},g_m,g_{m+1}) = 0, \enspace (r = 1,5)$ has been proved.
\end{proof}
We now give a proof for Theorem \ref{Thm-findingequations}.
\begin{proof}[Proof of Theorem \ref{Thm-findingequations}]
As we pointed out before, (i) is satisfied for all $C \in \n_+^\prime$ if and only if it is satisfied for one element of $\n_+^\prime$, say $C_1^+ \in \n_+^\prime$ (cf. \cite[Lem. 3.4]{kkp}). In other words, (i) is equivalent to
\begin{equation*}
\left(\widehat{d\pi_\mu}(C_1^+)\otimes \id_{\C_{m,\nu}}\right)\psi = 0,
\end{equation*}
which in turn is equivalent to $M_s(\psi) = M_s^\text{scalar}(\psi) + M_s^\text{vect}(\psi) = 0$ $(s=1,2,3)$ as we pointed out in (\ref{Ms-equal-zero}). The result follows now from Proposition \ref{prop-equivalence-Ms-Lr}.
\end{proof}
\section{Proof of Theorem \ref{Thm-solvingequations}}
\label{section-proof_of_solving_equations}
In this section we prove Theorem \ref{Thm-solvingequations}; in other words, we solve the system (\ref{equation-space}), which is given by the following differential equations (for $f_j\in \Pol_{a-m+1-j}[t]_\text{{\normalfont{even}}}$, $j = 0,1,2$):
\begin{equation*}
\begin{array}{l}
(E_1)\enspace S_{a+m-1}^{\lambda} f_0 = 0.\\[0.2cm]

(E_2)\enspace S_{a-m-1}^{\lambda} f_2 = 0.\\[0.2cm]

(E_3)\enspace S_{a+m}^{\lambda - 1}f_1 - 2\frac{d}{dt}f_0 = 0.\\[0.2cm]

(E_4)\enspace S_{a-m}^{\lambda - 1}f_1 + 2\frac{d}{dt}f_2 = 0.\\[0.2cm]

(E_5)\enspace (-m(\lambda + a-1) + \lambda -1 + \vartheta_t) f_0 + \frac{d}{dt}f_1 = 0. \\[0.2cm]

(E_6)\enspace (m(\lambda + a-1) + \lambda -1 + \vartheta_t) f_2 - \frac{d}{dt}f_1 = 0. 
\end{array}
\end{equation*}
The proof is a little technical, and we use several properties of Gegenbauer polynomials which are listed in the appendix. We follow the three-step strategy below:
\begin{equation*}
\begin{array}{ll}
\text{\textbf{Step 1}:} & \text{Obtain } f_0, f_1 \text{ and } f_2 \text{ up to constant from equations } (E_1)\text{--}(E_4).\\[2pt]
& \text{For } f_1 \text{ we obtain two different expressions}.\\[6pt]
\text{\textbf{Step 2}:} & \text{Check that } f_0, f_1, f_2 \text{ obtained in the first step satisfy equations } (E_5) \text{ and } (E_6).\\[2pt]
& \text{Here, we obtain linear relations on the constants that appear in the}\\[2pt]
& \text{expressions of } f_0, f_1, f_2.\\[6pt]
\text{\textbf{Step 3}:} & \text{Check that the two expressions for } f_1 \text{ obtained in the first step coincide}. 
\end{array}
\end{equation*}
The first two steps are quite straightforward, and can be done in a couple of lines by using some useful identities of the appendix. On the contrary, the third step is the hardest part of the proof and conforms the heart of this section.

\subsection{Steps 1 and 2: Obtaining the solution up to constant}
In this section we assume $a \geq m+1$. The case $a \leq m$ will be considered at the end of Section \ref{section-proof_of_solving_equations-subsection}.

First, suppose that $f_0, f_2$ ($f_j\in \Pol_{a-m+1-j}[t]_\text{{\normalfont{even}}}$) satisfy equations $(E_1), (E_2)$. Then, by Theorem \ref{Gegenbauer-solutions} and Lemma \ref{relationS-G}, we deduce that 
\begin{equation}\label{expression_f0-and-f2}
f_0(t) = q\Geg_{a+m-1}^\lambda(it), \quad f_2(t) = s\Geg_{a-m-1}^\lambda(it),
\end{equation}
for some constants $q,s \in \C$. Observe that the degree of $\Geg_{a-m-1}^\lambda(it)$ coincides with that of the space $\Pol_{a-m-1}[t]_\text{{\normalfont{even}}}$, so the expression for $f_2$ make sense. However, this does not happen for $f_0$. By assumption
\begin{equation*}
f_0 \in \Pol_{a-m+1}[t]_\text{{\normalfont{even}}} \subseteq \Pol_{a+m-1}[t]_\text{{\normalfont{even}}}, \quad \text{for } m \in \N_+,
\end{equation*}
so a priori the degree of the polynomial $\Geg^\lambda_{a+m-1}(it)$ is greater than the degree of $f_0$. We will see that $\Geg^\lambda_{a+m-1}(it)$ actually has degree $a-m+1$ for an appropriate parameter $\lambda \in \C$ (in other words, that the higher terms of $\Geg^\lambda_{a+m-1}(it)$ vanish), so that the expression for $f_0$ is indeed correct. An example of this type of phenomenon can be seen in Lemma \ref{lemma-KOSS}.

\begin{lemma}\label{lemma-step-1} 
Suppose that $f_0, f_2$ are given by {\normalfont{(\ref{expression_f0-and-f2})}} and take $f_1 \in \Pol_{a-m}[t]_\text{{\normalfont{even}}}$. Then
\begin{enumerate}[label=\normalfont{(\arabic*)}, topsep=0pt]
\item $f_1$ satisfies equation $(E_3)$ if and only if
\begin{equation}\label{expression_f1_1}
f_1(t) = -p\Geg_{a+m}^{\lambda-1}(it) - qt\Geg_{a+m-1}^\lambda(it),
\end{equation}
for some constant $p \in \C$.
\item $f_1$ satisfies equation $(E_4)$ if and only if
\begin{equation}\label{expression_f1_2}
f_1(t) = -r\Geg_{a-m}^{\lambda -1}(it) + st\Geg_{a-m-1}^\lambda(it),
\end{equation}
for some constant $r \in \C$.
\end{enumerate}
\end{lemma}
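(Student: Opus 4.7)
The plan is to solve each of the two linear ODEs $(E_3)$ and $(E_4)$ as inhomogeneous equations $S_{\ell}^{\lambda-1}f_1 = g$, realizing the general polynomial solution as a particular solution plus the (one-dimensional) kernel of $S_{\ell}^{\lambda-1}$ on $\Pol_\ell[t]_{\text{even}}$. For (1) we take $\ell = a+m$ and $g = 2f_0'$; for (2) we take $\ell = a-m$ and $g = -2f_2'$.

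First I would invoke Theorem \ref{Gegenbauer-solutions} (together with Lemma \ref{relationS-G}) to identify the kernel of $S_{a+m}^{\lambda-1}$, respectively $S_{a-m}^{\lambda-1}$, on $\Pol[t]_{\text{even}}$ as the line spanned by $\widetilde{C}_{a+m}^{\lambda-1}(it)$, respectively $\widetilde{C}_{a-m}^{\lambda-1}(it)$. This accounts directly for the summands $-p\,\widetilde{C}_{a+m}^{\lambda-1}(it)$ in (\ref{expression_f1_1}) and $-r\,\widetilde{C}_{a-m}^{\lambda-1}(it)$ in (\ref{expression_f1_2}), with the constants $p,r \in \C$ arising as free parameters and uniqueness of the Ansatz then forced by the one-dimensionality of the kernel.

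The key technical step is to exhibit a particular solution, which I plan to do via the identity
\[
S_{\ell+1}^{\mu-1}\bigl[t\,\widetilde{C}_\ell^\mu(it)\bigr] \;=\; -2\,\frac{d}{dt}\widetilde{C}_\ell^\mu(it).
\]
This is a short computation: expand $S_{\ell+1}^{\mu-1}(tP)$ for $P(t) := \widetilde{C}_\ell^\mu(it)$ using the Leibniz rule, and then substitute the relation $(1+t^2)P'' + (1+2\mu)tP' = \ell(\ell+2\mu)P$ coming from $S_\ell^\mu P = 0$ to eliminate the $(1+t^2)tP''$ term. The $t^2P'$ terms cancel, the $tP$ coefficient collapses to $\ell(\ell+2\mu) + (2\mu-1) - (\ell+1)(\ell+2\mu-1) = 0$, and only $-2P'$ remains. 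Specializing with $(\ell,\mu) = (a+m-1,\lambda)$ yields $S_{a+m}^{\lambda-1}\bigl[-q\,t\,\widetilde{C}_{a+m-1}^\lambda(it)\bigr] = 2f_0'$, confirming the particular solution in (1); specializing with $(\ell,\mu) = (a-m-1,\lambda)$ yields the corresponding particular solution $s\,t\,\widetilde{C}_{a-m-1}^\lambda(it)$ in (2).

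I do not expect a genuine obstacle here; the only subtlety worth flagging is that the stated expressions must lie in $\Pol_{a-m}[t]_{\text{even}}$, while the summand $-qt\,\widetilde{C}_{a+m-1}^\lambda(it)$ in (1) has apparent degree $a+m$. Consistency with the hypothesis that $f_0 \in \Pol_{a-m+1}[t]_{\text{even}}$ already forces the relevant higher-order terms of $\widetilde{C}_{a+m-1}^\lambda$ to vanish, so no new constraint is imposed by the lemma itself; the degeneration loci of Gegenbauer polynomials will, however, be the central input later when Step 3 of the overall strategy is carried out.
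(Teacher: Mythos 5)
Your proof is correct and follows essentially the same route as the paper: the paper rewrites $(E_3)$ as $S_{a+m}^{\lambda-1}(f_1+tf_0)=0$ using the operator identity (\ref{Slmu-identity-2}) together with $(E_1)$, and then invokes Theorem \ref{Gegenbauer-solutions} and Lemma \ref{relationS-G} — which is exactly your ``particular solution plus kernel'' decomposition, with your hand-computed identity $S_{\ell+1}^{\mu-1}[t\widetilde{C}_\ell^\mu(it)]=-2\frac{d}{dt}\widetilde{C}_\ell^\mu(it)$ being precisely (\ref{Slmu-identity-2}) applied to a kernel element of $S_\ell^\mu$. The degree subtlety you flag is also acknowledged in the paper's discussion immediately preceding the lemma and is deferred to Step 3, as you anticipate.
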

\begin{proof}
(1) By the identity (\ref{Slmu-identity-2}) and equation $(E_1)$ we have 
\begin{equation*}
S_{a+m}^{\lambda-1}\left(f_1 +tf_0\right) = 0.
\end{equation*}
Thus, by Theorem \ref{Gegenbauer-solutions} and Lemma \ref{relationS-G}, $f_1$ is given by (\ref{expression_f1_1}) for some constant $p\in \C$. The second statement can be proved in a similar fashion.
\end{proof}

\begin{lemma} \label{lemma-step-2}
{\normalfont{(1)}} Suppose that $f_0, f_2$ are given by {\normalfont{(\ref{expression_f0-and-f2})}} and that $f_1$ is given by {\normalfont{(\ref{expression_f1_1})}}. Then, $(E_5)$ is satisfied if and only if the following linear relation between $p$ and $q$ holds:
\begin{equation}\label{p_and_q_relation}
2i\gamma(\lambda-1, a+m)p +(m(\lambda + a-1) - \lambda +2)q = 0.
\end{equation}
{\normalfont{(2)}} Suppose that $f_0, f_2$ are given by {\normalfont{(\ref{expression_f0-and-f2})}} and that $f_1$ is given by {\normalfont{(\ref{expression_f1_2})}}. Then, $(E_6)$ is satisfied if and only if the following linear relation between $r$ and $s$ holds:
\begin{equation}\label{r_and_s_relation}
2i\gamma(\lambda-1, a-m)r + (m(\lambda + a-1) + \lambda -2)s = 0.
\end{equation}
\end{lemma}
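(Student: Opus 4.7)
The plan is to verify both parts by direct substitution, reducing each equation to a scalar coefficient multiplied by a single Gegenbauer polynomial; the vanishing of that scalar will then be exactly the linear relation in the statement. Since the calculation is short once the right identity is invoked, I would prove the \emph{iff} in one go by showing that the residual scalar is what is claimed.

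For part (1), I would substitute $f_0 = q\Geg_{a+m-1}^\lambda(it)$ and the expression (\ref{expression_f1_1}) for $f_1$ into $(E_5)$. The key observation is that the Euler-operator contribution $\vartheta_t f_0 = iqt(\Geg_{a+m-1}^\lambda)'(it)$ cancels exactly against the $t$-derivative of the factor $-qt\Geg_{a+m-1}^\lambda(it)$ sitting inside $\frac{d}{dt}f_1$. What is left is a linear combination of $\Geg_{a+m-1}^\lambda(it)$ and $(\Geg_{a+m}^{\lambda-1})'(it)$. At this point I invoke the derivative identity for the renormalized Gegenbauer polynomial (recorded in the appendix),
$$\frac{d}{dz}\Geg_{\ell}^{\mu-1}(z) \;=\; 2\gamma(\mu-1,\ell)\,\Geg_{\ell-1}^{\mu}(z),$$
with $\mu=\lambda$ and $\ell=a+m$, which collapses everything to a scalar multiple of $\Geg_{a+m-1}^\lambda(it)$. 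Reading off that scalar yields precisely (\ref{p_and_q_relation}).

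Part (2) is entirely analogous: substituting $f_2 = s\Geg_{a-m-1}^\lambda(it)$ together with (\ref{expression_f1_2}) into $(E_6)$ produces the same cancellation pattern, this time between $\vartheta_t f_2$ and the contribution of $st\Geg_{a-m-1}^\lambda(it)$ to $\frac{d}{dt}f_1$. Applying the same derivative identity with $\ell=a-m$ and collecting the coefficient of $\Geg_{a-m-1}^\lambda(it)$ produces (\ref{r_and_s_relation}). The natural symmetry between the two cases (interchange $a+m\leftrightarrow a-m$, $p\leftrightarrow r$, $q\leftrightarrow -s$, and flip the sign of the $m$-terms) serves as a sanity check.

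The main obstacle is purely bookkeeping rather than substantive: one has to track carefully the chain-rule factor of $i$ produced each time a derivative falls on a function of $it$, and keep the sign of each $\vartheta_t$-contribution straight so that the $t(\Geg)'(it)$ terms really do cancel. Once that cancellation is witnessed, the only nontrivial input is the single derivative identity above, and the rest is reading off a coefficient.
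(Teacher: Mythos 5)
Your proposal is correct and follows essentially the same route as the paper: substitute (\ref{expression_f0-and-f2}) and (\ref{expression_f1_1}) (resp.\ (\ref{expression_f1_2})) into $(E_5)$ (resp.\ $(E_6)$), let the $\vartheta_t$-contribution cancel against the $t\Geg$ term of $\frac{d}{dt}f_1$, apply the derivative identity (\ref{derivative-Gegenbauer-1}) to the remaining $\Geg^{\lambda-1}$ term, and read off the scalar coefficient of $\Geg_{a+m-1}^\lambda(it)$ (resp.\ $\Geg_{a-m-1}^\lambda(it)$), which matches the paper's displayed intermediate identity exactly. The only clause worth adding is the one the paper includes: the equivalence of the vanishing of the product with the vanishing of the scalar uses that $\Geg_{a+m-1}^\lambda$ and $\Geg_{a-m-1}^\lambda$ are non-zero polynomials, which holds since $a+m-1>0$ and $a-m-1\geq 0$.
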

\begin{proof}
(1) Substituting (\ref{expression_f0-and-f2}) and (\ref{expression_f1_1}) in $(E_5)$ and using the identities (\ref{derivative-Gegenbauer-1}), (\ref{derivative-Gegenbauer-2}) and (\ref{KKP-2}), $(E_5)$ amounts to
\begin{equation*}
\Big[- 2i\gamma(\lambda-1, a+m)p +(-m(\lambda + a-1) + \lambda -2)q\Big]\Geg_{a+m-1}^\lambda(it) = 0.
\end{equation*}
Thus, since $a+m-1 \geq 2m > 0$, $(E_5)$ is satisfied if and only if $p$ and $q$ are in the linear relation (\ref{p_and_q_relation}).\\
For (2), repeating the same process with (\ref{expression_f1_2}), $(E_6)$ can be shown to be equivalent to
\begin{equation*}
\Big[2i\gamma(\lambda-1, a-m)r +(m(\lambda + a-1) + \lambda -2)s\Big]\Geg_{a-m-1}^\lambda(it) = 0.
\end{equation*}
Hence, since $a-m-1 \geq 0$, $(E_6)$ is satisfied if and only if $r$ and $s$ are in the linear relation (\ref{r_and_s_relation}).
\end{proof}

\begin{rem}\label{remark-case-a=m} In Lemmas \ref{lemma-step-1} and \ref{lemma-step-2} we assumed $a \geq m+1$, but by looking carefully at the proof, one can deduce that Lemma \ref{lemma-step-1} and Lemma \ref{lemma-step-2} (1) also hold in the case $a = m$. On the contrary, Lemma \ref{lemma-step-2} (2) does not hold since equation $(E_6)$ is trivially satisfied and (\ref{r_and_s_relation}) does not need to hold. This follows from the fact that $f_1$ is constant and $f_2 = 0$ by the condition $f_j\in \Pol_{a-m+1-j}[t]_\text{{\normalfont{even}}}$.
\end{rem}

With the lemmas above, we have completed Steps 1 and 2. All that remains now is to show that the two expressions obtained for $f_1$ coincide, that is, that the following identity is satisfied
\begin{equation}\label{the-equation-pqrs-twosided}
p\Geg_{a+m}^{\lambda-1}(it) + qt\Geg_{a+m-1}^\lambda(it) = r\Geg_{a-m}^{\lambda -1}(it) - st\Geg_{a-m-1}^\lambda(it).
\end{equation}

\subsection{Step 3: Proving identity (\ref{the-equation-pqrs-twosided})}
This section is devoted to proving Lemma \ref{lemma-equation-pqrs} below, which gives a necessary and sufficient condition on the parameters $(\lambda, m)$ and on the constants $(p,q,r,s)$ such that identity (\ref{the-equation-pqrs-twosided}) is satisfied.

\begin{lemma} \label{lemma-equation-pqrs} Let $(\lambda, m, a) \in \C \times \N_+ \times \N$ with $a \geq m+1$, and suppose that $(p,q,r,s) \in \C^4\setminus\{(0,0,0,0)\}$ satisfies {\normalfont{(\ref{p_and_q_relation})}} and {\normalfont{(\ref{r_and_s_relation})}}. Then, the following conditions are equivalent.
\begin{enumerate}[label=\normalfont{(\roman*)}, topsep=0pt]
\item $(p, q, r, s)$ satisfies {\normalfont{(\ref{the-equation-pqrs-twosided})}}.
\item $(\lambda, m)$ belongs to one of the following cases:
\begin{itemize}[leftmargin=1cm]
\item[]{\normalfont{Case (I).}} $m = 1$ and $\lambda \in \C$,
\item[]{\normalfont{Case (II).}} $m >1$ and $\lambda \in \{-a,1-a,2-a\}$,
\end{itemize}
and $(p, q, r, s)$ satisfies {\normalfont{(\ref{coefficients-ps-special-case-1})}}, {\normalfont{(\ref{coefficients-ps-special-case-2})}} and {\normalfont{(\ref{coefficients-qs})}} below.
\begin{equation}\label{coefficients-ps-special-case-1}
2ip - (a+1)s = 0, \quad \text{if } m = 1, a \in 2\N+1, \text{ and } \lambda = -\frac{a-1}{2},
\end{equation}
\begin{equation}\label{coefficients-ps-special-case-2}
ir + s = q + s = 0, \quad \text{if } m = 1, a \in 2\N+1, \text{ and } \lambda = -\frac{a-3}{2},
\end{equation}
\begin{equation}\label{coefficients-qs}
q + \frac{\Gamma\left(\lambda + \left[\frac{a+m}{2}\right]\right)}{\Gamma\left(\lambda + \left[\frac{a-m}{2}\right]\right)}s = 0, \quad \text{otherwise.}
\end{equation}
\end{enumerate}
\end{lemma}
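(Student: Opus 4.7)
The plan is to exploit the fact that equation \eqref{the-equation-pqrs-twosided} equates a polynomial in $t$ of a priori degree $a+m$ on the left with one of degree at most $a-m$ on the right; for $m\geq 1$ the top $2m$ coefficients on the left must therefore vanish, and this vanishing, together with the linear relations \eqref{p_and_q_relation} and \eqref{r_and_s_relation}, will force $\lambda$ into the listed special values.

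First, I would use \eqref{p_and_q_relation} and \eqref{r_and_s_relation} to parameterize the solution space, reducing $(p,q,r,s)$ to two essentially free parameters (with the vanishing cases of the $\gamma$-coefficients set aside as exceptional sub-cases). Next, using the explicit coefficient formula for the renormalized Gegenbauer polynomial $\Geg^\mu_\ell$ from the appendix, I would extract the coefficient of $t^{a+m-2j}$ of the left-hand side for $j=0,\dots,m-1$ and set each to zero. Each such condition is linear in the two free parameters, with coefficients polynomial in $\lambda$. For generic $\lambda$ these $m$ conditions force the free parameters to vanish and, through \eqref{p_and_q_relation} and \eqref{r_and_s_relation}, contradict nontriviality of $(p,q,r,s)$. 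Nontrivial solutions therefore exist only when this linear system degenerates, which happens precisely when $\lambda\in\{-a,1-a,2-a\}$ for $m>1$: these are exactly the values at which $\Geg^{\lambda-1}_{a+m}(it)$ or $\Geg^{\lambda}_{a+m-1}(it)$ acquires reduced degree $\leq a-m$, via the standard degree-reduction of $\Geg^\mu_\ell$ when $\mu$ is a particular nonpositive integer (of the kind illustrated by Lemma \ref{lemma-KOSS}). When $m=1$ only two excess coefficients need to vanish and \eqref{p_and_q_relation} alone suffices generically; the special sub-cases \eqref{coefficients-ps-special-case-1} and \eqref{coefficients-ps-special-case-2} arise at the half-integer values $\lambda=-(a-1)/2$ and $\lambda=-(a-3)/2$ where $\gamma(\lambda-1,a+m)$ or $\gamma(\lambda-1,a-m)$ vanishes and the parameterization must be rebuilt around the surviving relations.

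Once $\lambda$ is admissible and the high-degree coefficients on the left are killed, \eqref{the-equation-pqrs-twosided} becomes an identity between two polynomials of degree $\leq a-m$. I would then match the top surviving coefficient on each side: using the gamma-function form of the leading coefficient of $\Geg^{\lambda-1}_{a+m}$ and $\Geg^{\lambda-1}_{a-m}$ at their respective top surviving degree, the ratio of the two sides telescopes to $\Gamma\!\left(\lambda+\lfloor(a+m)/2\rfloor\right)/\Gamma\!\left(\lambda+\lfloor(a-m)/2\rfloor\right)$, which is exactly the relation \eqref{coefficients-qs}. The equality of all lower-degree coefficients will then follow automatically from the three-term recurrence of the Gegenbauer polynomials combined with \eqref{p_and_q_relation} and \eqref{r_and_s_relation}; I would verify this inductively in the degree.

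The main obstacle will be the degree-reduction bookkeeping at the three critical values $\lambda\in\{-a,1-a,2-a\}$: both Gegenbauer polynomials on the left can become simultaneously short, and the surviving leading coefficients depend sensitively on the parity of $a+m$ and on which of the three $\lambda$-values is chosen. Extracting them requires a case-by-case computation with careful gamma-function manipulations to recover the clean closed form \eqref{coefficients-qs}, and to separate cleanly the generic case from the half-integer exceptions \eqref{coefficients-ps-special-case-1} and \eqref{coefficients-ps-special-case-2} in the $m=1$ regime.
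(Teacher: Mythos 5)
Your overall framework (kill the excess high-degree coefficients of the left-hand side of (\ref{the-equation-pqrs-twosided}), then match leading coefficients to obtain (\ref{coefficients-qs})) is the same as the paper's, but there is a genuine gap at the central step of the forward direction. You assert that the vanishing of the top coefficients, together with (\ref{p_and_q_relation}), pins $\lambda$ down to $\{-a,1-a,2-a\}$ when $m>1$. It does not. The coefficient of $t^{a+m-2k}$ on the left carries the factor $\Gamma(\lambda+a+m-1-k)/\Gamma\left(\lambda+\left[\tfrac{a+m}{2}\right]\right)$, so the vanishing of $A_{a+m},\dots,A_{a-m+2}$ only forces $\lambda$ into the set $\Lambda^m_{a-m+2}=\{1-a,2-a,\dots,-\left[\tfrac{a+m}{2}\right]\}$ (or into one of the degenerate alternatives $(\star_2)$--$(\star_4)$ of Lemma \ref{lemma-lambda-sets}); this set contains roughly $(a-m)/2$ integers and is strictly larger than $\{1-a,2-a\}$ as soon as $a\geq m+5$. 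The reduction from this large set to three values is precisely the part you declare to ``follow automatically from the three-term recurrence'': it does not follow automatically --- it is where the remaining constraint lives. The paper extracts it by additionally imposing the vanishing of the three lowest coefficients $A_\varepsilon, A_{\varepsilon+2}, A_{\varepsilon+4}$ of the difference of the two sides (Lemma \ref{lemma-lowerterms}); combining these with (\ref{r_and_s_relation}) produces the factorized conditions (\ref{second-equations-3})--(\ref{second-equations-4}), whose nontrivial solvability forces $\lambda\in\{-a,1-a,2-a\}$. Without this step your argument proves only membership in $\Lambda^m_{a-m+2}\cup\{-a\}$, which is not the statement of the lemma.

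A second, related inaccuracy: the admissible value $\lambda=-a$ does not arise from degree reduction of the Gegenbauer polynomials at all. At $\lambda=-a$ the identity of Lemma \ref{lemma-KOSS} reduces $\Geg^{\lambda-1}_{a+m}$ only to degree $a-m+2$, still too large; this value instead comes from the degenerate alternative $(\star_2)$, in which the linear form $2i\gamma(\lambda-1,a+m)p+(a-m+2)q$ vanishes simultaneously with $\lambda+a$. So the degree-reduction heuristic you lean on misidentifies the mechanism for one of the three values. (Also, for $m=1$ only one excess coefficient, $A_{a+1}$, needs to vanish, not two.) Your sketch of the converse direction is workable in outline --- for $\lambda=1-a$ the paper indeed uses a coefficient recursion of the kind you propose --- but for $\lambda\in\{-a,2-a\}$ it instead collapses both sides explicitly via (\ref{identity-KOSS}) and the three-term relations, which is what makes that verification finite.
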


In order to prove Lemma \ref{lemma-equation-pqrs}, we make use of several technical results (see Lemmas \ref{lemma-lambda-sets} and \ref{lemma-lowerterms} below). Let us first introduce  some notation. We rewrite equation (\ref{the-equation-pqrs-twosided}) as 
\begin{equation}\label{the-equation-pqrs-onesided}
\begin{gathered}
p\Geg_{a+m}^{\lambda-1}(it) + qt\Geg_{a+m-1}(it) - r\Geg_{a-m}^{\lambda-1}(it) + st\Geg_{a-m-1}^\lambda(it) \\
= A_{a+m}t^{a+m} + A_{a+m-2}t^{a+m-2} + \cdots + A_{a+m-2\left[\frac{a+m}{2}\right]} t^{a+m-2\left[\frac{a+m}{2}\right]}.
\end{gathered}
\end{equation}
For $\lambda \in \C$, $m \in \N_+$, $a\geq m$, and $k = 0, \dots, m-1$, we set
\begin{equation*}
\begin{aligned}
\Lambda^m_{a+m-2k} := & \left\{ -\left[\frac{a+m}{2}\right]-j : j = 0, 1, 2, \dots, \left[\frac{a+m-1}{2}\right]-1-k\right\}\\
 = &\left\{ \alpha + k -a -m +1 : \alpha = 1, 2, \dots, \left[\frac{a+m-1}{2}\right]-k\right\}.
\end{aligned}
\end{equation*}
Note that if $a = m$ and $k = m-1$, then $\left[\frac{a+m-1}{2}\right]-1-k = m-1 - 1 - k = -1 < 0$. So in this case we shall define $\Lambda_{a+m-2k}^m = \Lambda_{2}^m = \emptyset$. Otherwise, we have $\left[\frac{a+m-1}{2}\right]-1-k \geq 0$, so $\Lambda_{a+m-2k}^m \neq \emptyset$.

\begin{rem}\label{Remark-Lambdasets} (1) $\Lambda^m_{a+m-2(k-1)} \supset \Lambda^m_{a+m-2k}$, for any $k = 1, \dots, m-1$.\\
(2) $\Lambda^m_{a+m-2(k-1)} \setminus \Lambda^m_{a+m-2k} = \{-(a+m-1) + k\}$, for any $k = 1, \dots, m-1$.
\end{rem}
In the following, we suppose that $(p,q,r,s) \in \C^4\setminus\{(0,0,0,0)\}$ and $(\lambda, m, a) \in \C \times \N_+ \times \N$, with $a\geq m$. Observe that we have
\begin{equation*}
\begin{aligned}
\deg\left(p\Geg_{a+m}^{\lambda-1}(it) + qt\Geg_{a+m-1}^\lambda(it)\right) & \leq a+m,\\
\deg\left(r\Geg_{a-m}^{\lambda -1}(it) - st\Geg_{a-m-1}^\lambda(it)\right) & \leq  a-m.
\end{aligned}
\end{equation*}
Thus, if (\ref{the-equation-pqrs-twosided}) holds, then $A_{a+m-2k} = 0$ for $k=0, 1, \dots, m-1$. 

In the following result, we give precisely a necessary and sufficient condition such that these \lq\lq higher terms\rq\rq{ }vanish.
\begin{lemma}\label{lemma-lambda-sets} Suppose $a \geq m$ and $k = 0, 1, \dots, m-1$. Then
\begin{enumerate}[label=\normalfont{(\arabic*)}, topsep=0pt]
\item $A_{a+m-2k} = 0$ if and only if $\lambda \in \Lambda^m_{a+m-2k}$ or $2i\gamma(\lambda-1, a+m)p + (a+m-2k)q = 0$.
\item Moreover, if $k >0$ and $m>1$, then $A_{a+m} = \dots = A_{a+m-2k} = 0$ if and only if one of the following four conditions is satisfied:
\begin{itemize}[leftmargin=1cm]
\item[{\normalfont{($\star_1$)}}] $\lambda \in \Lambda^m_{a+m-2k}$.
\item[{\normalfont{($\star_2$)}}] $\lambda + a + m -1-k =2i\gamma(\lambda-1, a+m)p + (a+m-2k)q = 0$, with $(p,q) \neq (0,0)$.
\item[{\normalfont{($\star_3$)}}] $a+m \in 2\N$ and $q = \lambda + \frac{a+m-2}{2} = 0$, with $p \neq 0$.
\item[{\normalfont{($\star_4$)}}] $(p,q) = (0,0)$.
\end{itemize}
\end{enumerate}
\end{lemma}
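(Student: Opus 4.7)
The plan is to reduce part (1) to a direct computation with the renormalized Gegenbauer coefficients, and then use the nesting of the $\Lambda$-sets (Remark~\ref{Remark-Lambdasets}) to derive the case analysis in part~(2). To start, observe that the polynomials $r\Geg_{a-m}^{\lambda-1}(it)$ and $st\Geg_{a-m-1}^\lambda(it)$ have degree at most $a-m$, while $a+m-2k \geq a-m+2$ for $0 \leq k \leq m-1$. Hence they contribute nothing to $A_{a+m-2k}$ in this range, and writing the renormalized Gegenbauer polynomial as $\Geg^\mu_n(z) = \sum_{j=0}^{[n/2]} c_{n,j}(\mu)\,z^{n-2j}$ we obtain
\begin{equation*}
A_{a+m-2k} \;=\; i^{a+m-2k}\,p\,c_{a+m,k}(\lambda-1) \;-\; i^{a+m-2k-1}\,q\,c_{a+m-1,k}(\lambda).
\end{equation*}

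For part~(1), the plan is to factor this expression. Using the explicit formula for $c_{n,j}(\mu)$ (a rational function in $\mu$ with numerator a product of the form $\prod_{\alpha=1}^{n-j-1}(\mu + \alpha)$ and factorial denominators), one can pull out a common factor independent of $(p,q)$, leaving the linear combination $2i\gamma(\lambda-1,a+m)p + (a+m-2k)q$. The values of $\lambda$ at which the common factor vanishes are exactly those where one of the linear factors in the numerator of $c_{a+m,k}(\lambda-1)$ (and $c_{a+m-1,k}(\lambda)$) becomes zero, and a direct inspection shows that this set equals $\Lambda^m_{a+m-2k}$. This gives the desired equivalence.

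For part~(2), by Remark~\ref{Remark-Lambdasets}(1), the condition $\lambda \in \Lambda^m_{a+m-2k}$ automatically yields $\lambda \in \Lambda^m_{a+m-2k'}$ for every $k'\le k$, making $A_{a+m-2k'} = 0$ for all such $k'$; this is case~($\star_1$). If instead $\lambda \notin \Lambda^m_{a+m-2k}$, then part~(1) forces the linear relation $2i\gamma(\lambda-1,a+m)p+(a+m-2k)q=0$. For any smaller $k'<k$, either $\lambda\in\Lambda^m_{a+m-2k'}$, which by Remark~\ref{Remark-Lambdasets}(2) restricts $\lambda$ to a specific value of the form $-(a+m-1)+k''$; or the analogous linear relation for $k'$ must also hold, and subtracting two such relations yields $(2k-2k')q=0$, so $q=0$. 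Tracking these compatibilities leads to ($\star_2$) when the hypothesis $\lambda + a + m - 1 - k = 0$ pins down the unique $\lambda\in\Lambda^m_{a+m-2(k-1)}\setminus\Lambda^m_{a+m-2k}$ while leaving one free linear constraint on $(p,q)$, to ($\star_3$) when $q=0$ forces $\gamma(\lambda-1,a+m)p=0$ with $p\neq 0$, and to the trivial case ($\star_4$).

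The main obstacle I expect is the explicit factorization step in part~(1): matching the zero set of the computed prefactor with the combinatorially defined set $\Lambda^m_{a+m-2k}$ requires careful manipulation of the $\Gamma$-function ratios hidden in $c_{n,j}(\mu)$ and must handle the renormalization convention that makes $\Geg^\mu_n$ well-defined for non-positive integer $\mu$. Once the clean factorization is in place, part~(2) reduces to the bookkeeping described above, with the separation between cases ($\star_2$) and ($\star_3$) dictated by the parity condition on $a+m$ entering the definition of $\gamma(\mu,\ell)$.
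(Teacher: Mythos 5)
Your proposal is correct and follows essentially the same route as the paper: compute $A_{a+m-2k}$ directly from the Gegenbauer coefficients (the $r,s$ terms drop out by degree), factor out a $\Gamma$-function ratio whose zero set is $\Lambda^m_{a+m-2k}$ times the linear form $2i\gamma(\lambda-1,a+m)p+(a+m-2k)q$, and then exploit the nesting $\Lambda^m_{a+m-2(k-1)}\supset\Lambda^m_{a+m-2k}$ with one-element differences to sort the simultaneous vanishing into $(\star_1)$–$(\star_4)$ (the paper organizes this last step as an induction on $k$, but the bookkeeping is the same). Only a small slip: the second term of your displayed formula for $A_{a+m-2k}$ should carry a $+$ sign, i.e. $A_{a+m-2k}=i^{a+m-2k}p\,c_{a+m,k}(\lambda-1)+i^{a+m-2k-1}q\,c_{a+m-1,k}(\lambda)$, which is what produces the combination $2i\gamma(\lambda-1,a+m)p+(a+m-2k)q$ you correctly state as the outcome.
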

\begin{proof}
(1) From the definition of the renormalized Gegenbauer polynomials $\Geg^\mu_\ell(z)$ in (\ref{Gegenbauer-polynomial(renormalized)}), the formula of the term $A_{a+m-2k} \enspace (k = 0, 1, \dots, m-1)$ can be easily computed:
\begin{equation}\label{expression-higherterms}
A_{a+m-2k} = \frac{(-1)^k(2i)^{a+m-1-2k}}{k!(a+m-2k)!}\cdot\frac{\Gamma(\lambda+a+m-1-k)}{\Gamma(\lambda + \left[\frac{a+m}{2}\right])}\Big(2i\gamma(\lambda-1, a+m)p + (a+m-2k)q\Big).
\end{equation}
Suppose $a > m$ or $0 \leq k <m-1$. Since
\begin{equation*}
\begin{aligned}
\frac{\Gamma(\lambda+a+m-1-k)}{\Gamma(\lambda + \left[\frac{a+m}{2}\right])}   = & \left(\lambda + \left[\frac{a+m}{2}\right]\right) \left(\lambda + \left[\frac{a+m}{2}\right]+1\right)\\ & \cdot \ldots \cdot
 \left(\lambda + \left[\frac{a+m}{2}\right] + \left[\frac{a+m-1}{2}\right] -1 -k\right),
\end{aligned}		
\end{equation*}
we deduce from (\ref{expression-higherterms}) that $A_{a+m-2k}$ vanishes if and only if $\lambda \in \Lambda^m_{a+m-2k}$ or $2i\gamma(\lambda-1, a+m)p + (a+m-2k)q = 0$.

If $a = m$ and $k = m-1$, (\ref{expression-higherterms}) amounts to
\begin{equation*}
A_{a+m-2k} = A_{2} = \frac{2i(-1)^{m-1}}{(m-1)!}\left(i(\lambda+m-1)p +q\right).
\end{equation*}
Thus, $A_2 = 0$ if and only if
\begin{equation*}
2i\gamma(\lambda-1, a+m)p + (a+m-2k)q = 2\left(i(\lambda+m-1)p + q\right) = 0.
\end{equation*}
Since $\Lambda^m_{a+m-2k} = \Lambda^m_{2} = \emptyset$, the statement holds. 

(2) We prove the second statement by induction on $k = 1, 2, \dots, m-1$. 
Suppose that $k=1$. Then, by (1) we have that 
$A_{a+m} = A_{a+m-2} = 0$ if and only if the two following conditions are satisfied:
\begin{itemize}
\item$\lambda \in \Lambda^m_{a+m}$, or $2i\gamma(\lambda-1, a+m)p + (a+m)q = 0$.
\item $\lambda \in \Lambda^m_{a+m-2}$, or $2i\gamma(\lambda-1, a+m)p + (a+m-2)q = 0$.
\end{itemize}
Since $\Lambda^m_{a+m} \supset \Lambda^m_{a+m-2}$ by Remark \ref{Remark-Lambdasets} (1), this is equivalent to one of the following three conditions being satisfied:
\begin{itemize}
\item $\lambda \in \Lambda^m_{a+m} \cap \Lambda^m_{a+m-2} = \Lambda^m_{a+m-2}$.
\item $\lambda \in \Lambda^m_{a+m}$ and $2i\gamma(\lambda-1, a+m)p + (a+m-2)q = 0$.
\item $2i\gamma(\lambda-1, a+m)p + (a+m)q = 2i\gamma(\lambda-1, a+m)p + (a+m-2)q = 0$.
\end{itemize}
By Remark \ref{Remark-Lambdasets} (2), $\Lambda^m_{a+m} \supset \Lambda^m_{a+m-2} = \{-a-m+2\}$, so the second condition can be rewritten as
\begin{equation*}
\lambda +a+m-2 = 2i\gamma(\lambda-1, a+m)p + (a+m-2)q = 0.
\end{equation*}
On the other hand, the third amounts to
\begin{equation*}
2i\gamma(\lambda-1, a+m)p = q = 0,
\end{equation*}
from which we have two cases. If $p\neq 0$, then necessarily $\gamma(\lambda-1, a+m) = q = 0$, i.e., $a+m \in 2\N$ and $\lambda + \frac{a+m-2}{2} = q = 0$. The other case is $(p,q) = (0,0)$. Thus, the three conditions above can be rewritten in such a way that each condition cannot be simultaneously satisfied as follows.
\begin{itemize}
\item $\lambda \in \Lambda^m_{a+m-2}$.
\item $\lambda + a + m -2 =2i\gamma(\lambda-1, a+m)p + (a+m-2)q = 0$, with $(p,q) \neq (0,0)$.
\item $a+m \in 2\N$ and $q = \lambda + \frac{a+m-2}{2} = 0$, with $p \neq 0$.
\item $(p,q) = (0,0)$.
\end{itemize}
These are precisely the conditions $(\star_1)$--$(\star_4)$ for $k=1$.

Now, suppose that the assertion is true for some $k = 1, \dots, m-2$. Then, by assumption and by (1), $A_{a+m} = \dots = A_{a+m-2k} = 0 = A_{a+m-2(k+1)}$ if and only if one of the conditions $(\star_1)$--$(\star_4)$ (for $k$) together with one of the conditions below (for $k+1$) are satisfied
\begin{itemize}
\item[(I)] $\lambda \in \Lambda^m_{a+m-2(k+1)}$.
\item[(II)] $2i\gamma(\lambda-1, a+m)p + (a+m-2(k+1))q = 0$.
\end{itemize}
By Remark \ref{Remark-Lambdasets} (1), $(\star_1)$ together with (I) amounts to
\begin{equation*}
\lambda \in \Lambda^m_{a+m-2(k+1)}.
\end{equation*}
Similarly, by Remark \ref{Remark-Lambdasets} (2), $(\star_1)$ together with (II) amounts to
\begin{equation*}
\lambda +a+m-2-k = 2i\gamma(\lambda-1,a+m)p + (a+m-2(k+1))q = 0.
\end{equation*}
Since $-a-m+k+1 \in \Lambda^m_{a+m-2(k-1)}\setminus\Lambda^m_{a+m-2k}$, and $\Lambda^m_{a+m-2k} \supset \Lambda^m_{a+m-2(k+1)}$, $(\star_2)$  and (I) cannot be simultaneously satisfied. The same happens with $(\star_2)$ and (II) since if they were both satisfied, that would imply
\begin{equation*}
\lambda + a +m -k-1 = \lambda + \frac{a+m-2}{2} = q = 0 \Rightarrow a = 2k-m \leq m-2
\end{equation*}
which contradicts the hypothesis $a \geq m$. If $a+m\in 2\N$, $\frac{-a-m+2}{2} \notin \Lambda^m_{a+m-2(k+1)}$, so $(\star_3)$ and (I) cannot be satisfied at the same time neither. On the other hand, $(\star_3)$ together with (II) amounts to
\begin{equation*}
a+m \in 2\N \text{ and } q = \lambda + \frac{a+m-2}{2} = 0. 
\end{equation*} 
The last two pairs of conditions are $(\star_4)$ with (I) or (II), but in either case we obtain $(p,q) = (0,0)$. In conclusion, the only possibilities we obtained in order to have $A_{a+m} = \dots = A_{a+m-2k} = 0 = A_{a+m-2(k+1)}$ are the following four:
\begin{itemize}
\item $\lambda \in \Lambda^m_{a+m-2(k+1)}$.
\item $\lambda + a + m -2-k =2i\gamma(\lambda-1, a+m)p + (a+m-2(k+1))q = 0$, with $(p,q) \neq (0,0)$.
\item $a+m \in 2\N$ and $q = \lambda + \frac{a+m-2}{2} = 0$, with $p \neq 0$.
\item $(p,q) = (0,0)$.
\end{itemize}
These are precisely the conditions $(\star_1)$--$(\star_4)$ for $k+1$. The second statement is now proved by induction on $k$.
\end{proof}

\begin{rem} By the lemma above, if $m>1$, we know that a necessary condition for $(\ref{the-equation-pqrs-twosided})$ to be satisfied is that $\lambda$ must belong to a finite subset of $\Z$ unless $(p,q) = (0,0)$. We will see in the proof of Lemma \ref{lemma-equation-pqrs} that this subset is precisely $\{-a, 1-a, 2-a\}$.
\end{rem}

In the next lemma, we give a necessary and sufficient condition on the tuple $(\lambda, m, p,q,r,s)$ such that the first three \lq\lq lower terms\rq\rq{ }of (\ref{the-equation-pqrs-onesided}) vanish. Concretely, we set 
\begin{equation*}
\varepsilon := a+m-2\left[\frac{a+m}{2}\right] =  \begin{cases}
0, \text{ if } a+m \in 2\N,\\
1, \text{ if } a+m \in 2\N + 1.
\end{cases}
\end{equation*}
In Lemma \ref{lemma-lowerterms} below, we consider the following equations:
\begin{equation}\label{second-equations-1}
\begin{gathered}
2ip + \varepsilon q + (-1)^m \frac{\Gamma\left(\left[\frac{a+m+2}{2}\right]\right)}{\Gamma\left(\left[\frac{a-m+2}{2}\right] \right)}\left(-2ir + \varepsilon s\right) = 0,
\end{gathered}
\end{equation}
\begin{equation}\label{second-equations-2}
\begin{gathered}
(-1)^m \frac{\Gamma\left(\left[\frac{a+m}{2}\right]\right)}{\Gamma\left(\left[\frac{a-m+2}{2}\right]\right)}
\Big(m(\lambda + a -1)(2ir - \varepsilon s)
+ 2\left[\frac{a-m}{2}\right]\gamma(\lambda-1, a-m+1)s \Big)\\ + 2\gamma(\lambda-1, a+m+1)q = 0,
\end{gathered}
\end{equation}
\begin{equation}\label{second-equations-3}
\begin{gathered}
(\lambda + a-1) \Big[(- m(\lambda + a -1) + \lambda)ir - (a-m)s \Big] = 0, \text{ if } a+m \in 2\N
\end{gathered}
\end{equation}
\begin{equation}\label{second-equations-4}
\begin{gathered}
(\lambda + a-1)\Big[2\big(-m(\lambda + a -1) + \lambda + 1)ir + \big((-2a+3m+1)\lambda -a^2\\
 + (2 + 3m)a -m^2 -3m -2\big)s\Big] = 0, \text{ if } a+m \in 2\N +1.
 \end{gathered}
\end{equation}
\begin{lemma}\label{lemma-lowerterms}
Suppose $a \geq m+1$. Then the following holds.
\begin{enumerate}[label=\normalfont{(\arabic*)}, topsep=0pt]
\item $A_\varepsilon = 0$  if and only if {\normalfont{(\ref{second-equations-1})}} is satisfied.
\item If $a \geq m+2$, then $A_\varepsilon = A_{\varepsilon + 2} =0$ if and only if {\normalfont{(\ref{second-equations-1})}} and {\normalfont{(\ref{second-equations-2})}} are satisfied.
\item If $a \geq m+4$, then $A_\varepsilon = A_{\varepsilon + 2} = A_{\varepsilon + 4} = 0$ if and only if {\normalfont{(\ref{second-equations-1})}},
{\normalfont{(\ref{second-equations-2})}},  {\normalfont{(\ref{second-equations-3})}} and {\normalfont{(\ref{second-equations-4})}} are satisfied.\\
\end{enumerate}
\end{lemma}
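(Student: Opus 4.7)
The plan is to compute the three coefficients $A_\varepsilon$, $A_{\varepsilon+2}$ and $A_{\varepsilon+4}$ in the expansion \eqref{the-equation-pqrs-onesided} explicitly as linear forms in $(p,q,r,s)$, by expanding each of the four summands $p\widetilde{C}_{a+m}^{\lambda-1}(it)$, $qt\widetilde{C}_{a+m-1}^\lambda(it)$, $-r\widetilde{C}_{a-m}^{\lambda-1}(it)$, $st\widetilde{C}_{a-m-1}^\lambda(it)$ in powers of $t$ via the definition \eqref{Gegenbauer-polynomial(renormalized)} and reading off the coefficients of $t^\varepsilon$, $t^{\varepsilon+2}$ and $t^{\varepsilon+4}$. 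The coefficient of $t^{\ell-2j}$ in $\widetilde{C}_\ell^\mu(it)$ is a product of $i^{\ell-2j}$, a combinatorial factor, and a ratio of Gamma functions; multiplication by $t$ shifts the degree by one and preserves parity, so only monomials of degree congruent to $\varepsilon$ modulo $2$ can contribute to $A_{\varepsilon+2j}$.

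For part (1), each of the four summands contributes via its lowest-degree monomial; writing out these four contributions and pulling out the common non-zero $\Gamma$-factor, together with the relative sign $(-1)^m$ arising from $(-1)^j$ with $j=[(a\pm m)/2]$ in the two expansions (note $[(a+m)/2]-[(a-m)/2]=m$ since $a+m$ and $a-m$ have the same parity), the coefficient $A_\varepsilon$ collapses to exactly \eqref{second-equations-1}, the variable $\varepsilon\in\{0,1\}$ automatically switching off the $q$ and $s$ contributions when $a+m$ is even. Part (2) is entirely analogous but uses the second-lowest-degree monomial of each summand; the Gamma ratio $\Gamma(\mu+\ell-1)/\Gamma(\mu+[\ell/2])$ that appears naturally equals $\gamma(\mu-1,\ell)$, and it is precisely this identification that merges the two parity cases into the single expression \eqref{second-equations-2}. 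For part (3), the computation of $A_{\varepsilon+4}$ does not collapse into one identity and must be split according to the parity of $a+m$; after reducing modulo \eqref{second-equations-1} and \eqref{second-equations-2}, the even case yields \eqref{second-equations-3} and the odd case yields \eqref{second-equations-4}, and in both cases the overall factor $(\lambda+a-1)$ emerges as a consequence of this reduction. The hypotheses $a\geq m+2$ and $a\geq m+4$ are imposed precisely to ensure that the corresponding low-degree monomials of each of the four summands actually exist.

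The main obstacle is the bookkeeping. The raw linear combinations produced by direct expansion look quite different from the clean expressions on the right-hand sides of \eqref{second-equations-1}--\eqref{second-equations-4}, and bringing them into that form requires repeated use of $\Gamma(z+1)=z\Gamma(z)$, careful reindexing of the expansion sums, and the $\gamma(\mu,\ell)$ notation to unify the two parity cases. In part (3) the reduction modulo the previously derived relations adds a further layer, particularly in the odd case, where the coefficient $(-2a+3m+1)\lambda-a^2+(2+3m)a-m^2-3m-2$ of $s$ in \eqref{second-equations-4} only surfaces after several cancellations between contributions from $\widetilde{C}_{a-m}^{\lambda-1}(it)$ and $\widetilde{C}_{a-m-1}^\lambda(it)$; tracking these cancellations accurately is the genuinely delicate step of the proof.
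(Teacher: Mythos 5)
Your proposal follows essentially the same route as the paper: the paper writes down the closed formula for the general coefficient $A_{a+m-2k}$ (for $k=m,\dots,[\tfrac{a+m}{2}]$) obtained by expanding the four Gegenbauer summands, and then declares that the lemma "follows from a direct computation by using this formula" --- which is exactly the coefficient-extraction, parity bookkeeping, and $\Gamma$-function simplification you describe, including the $(-1)^m$ sign from $[\tfrac{a+m}{2}]-[\tfrac{a-m}{2}]=m$ and the role of $\varepsilon$ in switching off the $q$ and $s$ contributions. Your outline is correct and, if anything, slightly more explicit than the paper about where the hypotheses $a\geq m+2$, $a\geq m+4$ and the reduction modulo \eqref{second-equations-1}--\eqref{second-equations-2} enter.
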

\begin{proof} The conditions on $a$ in (2) and (3) are necessary to guarantee that the coefficients $A_{\varepsilon+j}$ in the lower terms are not identically zero. For example, if $a\geq m+2$, then generically $\deg(r\Geg_{a-m}^{\lambda-1}(it) - st\Geg_{a-m-1}^\lambda(it)) \geq 2$. Thus, if $a+m\in 2\N$, then $A_\varepsilon = A_0$, $A_{\varepsilon +2} = A_2 \not\equiv 0$. Similarly, if $a+m\in 2\N+1$, then $A_\varepsilon = A_1$, $A_{\varepsilon+2} = A_3 \not\equiv 0$.

From the definition of the renormalized Gegenbauer polynomials, the term $A_{a+m-2k}$ (for $k = m, \ldots, \left[\frac{a+m}{2}\right]$) of (\ref{the-equation-pqrs-onesided}) is given by the formula below.
\begin{equation*}
\begin{aligned}
A_{a+m-2k} & = \frac{(-1)^k(2i)^{a+m-1-2k}}{k!(a+m-2k)!} \Bigg[ 
\frac{\Gamma(\lambda+a+m-1-k)}{\Gamma(\lambda + \left[\frac{a+m}{2}\right])}\Big(2i\gamma(\lambda-1, a+m)p+ (a+m-2k)q\Big)\\
+ &(-1)^m \frac{\Gamma(k+1)}{\Gamma(k-m+1)}\frac{\Gamma(\lambda+a-1-k)}{\Gamma(\lambda + \left[\frac{a-m}{2}\right])}\Big(-2i\gamma(\lambda-1, a-m)r + (a+m-2k)s\Big)\Bigg].
\end{aligned}
\end{equation*}
The lemma follows from a direct computation by using this formula.
\end{proof}
\begin{lemma}\label{lemma-claim} Let $(\lambda, m, a) \in \C \times (2 + \N) \times \N$ with $a \geq m+1$, and suppose that $(p,q,r,s) \in \C^4\setminus\{(0,0,0,0)\}$ satisfies {\normalfont{(\ref{r_and_s_relation})}} and {\normalfont{(\ref{p_and_q_relation})}}. If $(p,q,r,s)$ satisfies {\normalfont{(\ref{the-equation-pqrs-twosided})}}, then $(p,q) = (0,0)$ implies $(r,s) = (0,0)$ unless $(a, \lambda) = (m+1, 1-m)$.
\end{lemma}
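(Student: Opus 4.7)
Setting $(p,q)=(0,0)$, the identity (\ref{the-equation-pqrs-twosided}) collapses to the polynomial identity
\begin{equation*}
r\,\Geg_{a-m}^{\lambda-1}(it) \;=\; s\,t\,\Geg_{a-m-1}^{\lambda}(it), \qquad (\star)
\end{equation*}
and the plan is to combine $(\star)$ with (\ref{r_and_s_relation}) to force $(r,s)=(0,0)$ outside of the exceptional case $(a,\lambda)=(m+1,1-m)$. The argument splits according to $n:=a-m$: for $n=1$ the identity $(\star)$ amounts to a single linear constraint on $(r,s)$, so a non-trivial common solution with (\ref{r_and_s_relation}) exists precisely when the two linear forms are proportional; for $n\geq 2$, $(\star)$ already yields two or more independent linear relations that overdetermine $(r,s)$.

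The case $n=1$ is handled directly. Using $\Geg_1^\mu(z)=2z$ and $\Geg_0^\mu(z)=1$, the identity $(\star)$ becomes $2irt=st$, i.e.\ $s=2ir$. Substituting this into (\ref{r_and_s_relation}) at $a=m+1$, which reads $2ir+\bigl((m+1)\lambda+m^2-2\bigr)s=0$, and using the factorisation $(m+1)\lambda+m^2-1=(m+1)(\lambda+m-1)$, one obtains
\begin{equation*}
2ir\,(m+1)(\lambda+m-1)=0.
\end{equation*}
Since $m\geq 2$, the factor $m+1$ is nonzero, so either $r=0$ (and then $s=0$) or $\lambda=1-m$, which is exactly the exceptional case of the lemma.

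For $n\geq 2$ I plan to compare the leading $t^n$-coefficient of $(\star)$ with a second coefficient of $(\star)$: the constant term when $n$ is even, and the coefficient of $t$ when $n$ is odd. A direct computation from the renormalization (\ref{Gegenbauer-polynomial(renormalized)}) shows that for $n$ even the constant term of $\Geg_n^\mu$ equals $(-1)^{n/2}/(n/2)!$, which is nonzero and independent of $\mu$; since the right-hand side of $(\star)$ has no constant term, this forces $r=0$ directly, and then the leading-coefficient match gives $s=0$. For $n$ odd, similar coefficient comparison yields $s=2ir$ from the $t$-coefficient and $s=2ir/n$ from the leading coefficient, and these are compatible for $n\geq 3$ only if $r=s=0$.

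The residual obstacle concerns the isolated values of $\lambda$ at which the leading coefficient of $\Geg_{n-1}^\lambda$ vanishes, so that the leading-coefficient match of $(\star)$ degenerates. For these specific $\lambda$ one must descend to a subleading coefficient and combine with (\ref{r_and_s_relation}) to conclude $(r,s)=(0,0)$. This is a finite case-analysis using the explicit coefficients of $\Geg_\ell^\mu$ and the values of $\gamma(\mu,\ell)$; no new idea is required beyond the strategy outlined above.
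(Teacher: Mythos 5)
Your overall strategy --- collapse (\ref{the-equation-pqrs-twosided}) with $(p,q)=(0,0)$ to $r\,\Geg_{a-m}^{\lambda-1}(it)=st\,\Geg_{a-m-1}^{\lambda}(it)$, compare coefficients, and feed the result back into (\ref{r_and_s_relation}) --- is the paper's, and your case $n:=a-m=1$ reproduces the paper's computation $(m+1)(\lambda+m-1)s=0$ exactly. The case $n$ even is also essentially right, but for the wrong stated reason: after the constant term forces $r=0$, you should conclude $s=0$ from the fact that $\Geg_{n-1}^{\lambda}$ is a \emph{nonzero} polynomial for every $\lambda$ (its lowest coefficient never vanishes, by the renormalization (\ref{Gegenbauer-polynomial(renormalized)})), not from a ``leading-coefficient match'' --- the leading coefficient can vanish for special $\lambda$.

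The genuine gap is in the case $n\geq 3$ odd. Your two relations $s=2ir$ (from $t^1$) and $s=2ir/n$ (from $t^n$) close the argument only when the common factor $\Gamma(\lambda+n-1)/\Gamma\bigl(\lambda+\tfrac{n-1}{2}\bigr)$ in both leading coefficients is nonzero; for $\lambda\in\{-(n-2),\dots,-\tfrac{n-1}{2}\}$ the $t^n$ comparison reads $0=0$. This degenerate set genuinely meets the locus where (\ref{r_and_s_relation}) is compatible with $s=2ir$ and $r\neq 0$: for $(a,m,\lambda)=(9,2,-5)$ one has $m(\lambda+a-1)+\lambda-1=0$, so $s=2ir$, (\ref{r_and_s_relation}), and the vacuous leading comparison leave $(r,s)$ free up to scale. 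Hence the case you defer is nonempty, and ``descend to a subleading coefficient'' is not yet a proof: you would have to identify the first non-degenerate level and check incompatibility there, which you have not done. The paper sidesteps this by comparing the coefficient of $t^3$ instead of $t^n$: by Lemma \ref{lemma-lowerterms} (i.e.\ (\ref{second-equations-2}) with $(p,q)=(0,0)$ and $2ir=s$), $A_3=0$ forces $(a-m-1)\bigl(\lambda+\tfrac{a-m-1}{2}\bigr)s=0$, so for $a\geq m+3$ either $s=0$ or $\lambda=-\tfrac{a-m-1}{2}$; in the latter case (\ref{r_and_s_relation}) becomes $\tfrac{1}{2}(m-1)(a+m+1)s=0$, whence $s=0$ since $m>1$. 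Replacing your leading-coefficient comparison by this second-lowest-coefficient comparison closes the gap uniformly in $\lambda$.
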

\begin{proof}
Suppose $(p,q) = (0,0)$. Then (\ref{the-equation-pqrs-twosided}) amounts to
\begin{equation*}
0 = r\Geg_{a-m}^{\lambda-1}(it) - st\Geg_{a-m-1}^\lambda(it).
\end{equation*}
If $a + m$ is even, we have $A_0 = 0$, which implies $r = 0$. Thus $st\Geg_{a-m-1}^\lambda(it) = 0$. Since $a\geq m+1$, this implies $s = 0$. If $a+m$ is odd and $a \geq m+3$, from $(p,q) = (0,0)$ we obtain $A_1 = A_3 = 0$, which by Lemma \ref{lemma-lowerterms} amounts to $-2ir + s = \lambda + \frac{a-m-1}{2} = 0$. By a direct computation, this conditions together with (\ref{r_and_s_relation}) amount to $(m-1)(a+m+1)s=0$. Since $m>1$ and $a \in \N$, this implies $s = 0$, which in turn implies $(r,s) = (0,0)$. On the other hand, if $a=m+1$, again by Lemma \ref{lemma-lowerterms}, from $A_1=0$ we have $-2ir + s = 0$, which together with (\ref{r_and_s_relation}) amounts to $(m+1)(\lambda + m-1)s = 0$. Hence, if $(a,\lambda) = (m+1, 1-m)$, the right-hand side of (\ref{the-equation-pqrs-twosided}) is zero while $(r,s) \neq (0,0)$. This concludes the lemma.
\end{proof}

We now prove Lemma \ref{lemma-equation-pqrs}.
\begin{proof}[Proof of Lemma \ref{lemma-equation-pqrs}]
(i) $\Rightarrow$ (ii) First, we prove that if (\ref{the-equation-pqrs-twosided}) is satisfied, then $(\lambda, m)$ must belong to Case (I) or (II) and that $(p,q,r,s)$ satisfies the identities (\ref{coefficients-ps-special-case-1}), (\ref{coefficients-ps-special-case-2}) and (\ref{coefficients-qs}).

Suppose $m = 1$. If (\ref{the-equation-pqrs-twosided}) is satisfied, then $A_{a+1} = 0$. It then follows from Lemma \ref{lemma-lambda-sets} that $\lambda \in \Lambda_{a+1}^1$ or (\ref{p_and_q_relation}) holds. By assumption $(p,q,r,s)$ satisfies (\ref{p_and_q_relation}). Thus, $\lambda$ can be any complex number. Therefore, in this case, $(\lambda, m)$ belongs to Case (I).

Suppose $m>1$. If (\ref{the-equation-pqrs-twosided}) is satisfied, then
\begin{equation*}
A_{a+m} = A_{a+m-2} = \cdots = A_{a-m+2} = 0.
\end{equation*}
Then, by Lemma \ref{lemma-lambda-sets}, one of the conditions $(\star_1)$--$(\star_4)$ in Lemma \ref{lemma-lambda-sets} holds. To look at $\lambda$ carefully, we consider the following two cases for $a \geq m+1$ separately.
\begin{enumerate}[leftmargin=2cm]
\item[Case 1.] $a \in \{m+1, m+2, m+3\}$.
\item[Case 2.] $a \geq m+4$.
\end{enumerate}

\underline{Case 1}. Suppose $a \in \{m+1, m+2, m+3\}$.
We first observe the case $a = m+3$. In this case, as $a+m$ is odd, $(\star_3)$ cannot occur. Further, $(\star_4)$ cannot occur either. Indeed, if $(p,q) = (0,0)$, then Lemma \ref{lemma-claim} shows that $(p,q,r,s) = (0,0,0,0)$, which contradicts the hypothesis of Lemma \ref{lemma-equation-pqrs}. Thus only either $(\star_1)$ or $(\star_2)$ occurs. Since $\Lambda_{a-m+2}^m = \{1- a, 2-a\}$ and since $(\star_2)$ is precisely (\ref{p_and_q_relation}) for $\lambda = -a$, we have $\lambda \in \{-a, 1- a, 2-a\}$, that is, $(\lambda, m)$ belongs to Case (II).

If $a \in \{m+1, m+2\}$, a similar argument can be applied. In these cases, $\Lambda_{a-m+2}^m = \{1- a\}$ and $(\star_2)$ leads to $\lambda = -a$ as before. If $a = m+2$, $(\star_3)$ can be satisfied and corresponds to $\lambda = -m = 2-a$; while $(\star_4)$ cannot occur again by Lemma \ref{lemma-claim}. On the other hand, if $a=m+1$, $(\star_3)$ cannot be satisfied while $(\star_4)$ can occur if $\lambda = 1-m = 2-a$ again by Lemma \ref{lemma-claim}.

Thus, for $m>1$, if $a \in \{m+1, m+2, m+3\}$ then $(\lambda, m)$ must belong to Case (II).

\underline{Case 2}. Suppose $a \geq m+4$. Then, by Lemma \ref{lemma-lowerterms} (iii), $A_\varepsilon$, $A_{\varepsilon + 2}$ and $A_{\varepsilon+4}$ vanish if and only if (\ref{second-equations-1})--(\ref{second-equations-4}) are satisfied. Suppose in addition that $\lambda \neq 1-a$. Then, as $m$ is assumed to be $m>1$, one can easily check that (\ref{r_and_s_relation}) and (\ref{second-equations-3})--(\ref{second-equations-4}) are both satisfied if and only if $\lambda \in \{-a, 2-a\}$. In fact, if $a+m$ is even, multiplying both sides of (\ref{r_and_s_relation}) by $a-m$ and substituting into the equation the value of $s$ obtained from (\ref{second-equations-3}), leads to 
\begin{equation*}
\begin{aligned}
& 2(a-m)\left(\lambda + \frac{a-m-2}{2}\right)r + \left((m+1)\lambda+ ma -m-2\right)\left((1-m)\lambda -ma +m \right)r\\
& = -(m+1)(m-1)(\lambda +a)(\lambda +a-2)r.
\end{aligned}
\end{equation*}
For $m >1$, this is zero if and only if $\lambda \in \{-a, 2-a\}$ or $r = 0$. However, if $r = 0$, then $s = p = q = 0$ by (\ref{second-equations-3}), (\ref{second-equations-1}) and (\ref{second-equations-2}) respectively, which contradicts the hypothesis $(p, q, r, s) \neq (0, 0, 0, 0)$. The case $a + m$ odd can be checked analogously.\\

Clearly $\lambda$ can be $1-a$, since (\ref{second-equations-3}) and (\ref{second-equations-4}) are trivially satisfied for $\lambda = 1-a$. Therefore, $(\lambda, m)$ belongs to Case (II) for $a \geq m+4$.

Now, we show that $(p,q,r,s)$ satisfies (\ref{coefficients-ps-special-case-1}), (\ref{coefficients-ps-special-case-2}), and (\ref{coefficients-qs}) for Cases (I) and (II) (we remark that only (\ref{coefficients-qs}) is relevant for Case (II)).

Suppose $a \in \{m+1, m+2, m+3\}$. Then, a direct computation using (\ref{p_and_q_relation}), (\ref{r_and_s_relation}), (\ref{second-equations-1}) and (\ref{second-equations-2}) shows that (\ref{coefficients-ps-special-case-1}), (\ref{coefficients-ps-special-case-2}) and (\ref{coefficients-qs}) hold for $(\lambda, m)$ in both cases.

Suppose $a \geq m+4$. If $m=1$ (and $\lambda \in \C$), or $m>1$ and $\lambda \in \{-a, 2-a\}$, then a direct computation shows that the identities (\ref{second-equations-1}), (\ref{second-equations-2}) and (\ref{second-equations-3}) can be simplified as
\begin{equation}\label{first-equations-1-again}
2i\gamma(\lambda-1, a+m)p + \left(m(\lambda + a -1) -\lambda +2\right)q = 0,
\end{equation}
\begin{equation}\label{coefficients-qs-again}
q + \frac{\Gamma\left(\lambda + \left[\frac{a+m}{2}\right]\right)}{\Gamma\left(\lambda + \left[\frac{a-m}{2}\right]\right)}s = 0,
\end{equation}
\begin{equation}\label{first-equations-2-again}
2i\gamma(\lambda-1, a-m)r + \left(m(\lambda + a -1) +\lambda -2\right)s = 0,
\end{equation}
unless $(\lambda, m, a)$ belongs to one of the following three cases:
\begin{enumerate}[leftmargin=2cm]
\item[Case (a):] $(\lambda, m) = (-\frac{a-1}{2}, 1)$ and $a \in 2\N+1$.
\item[Case (b):] $(\lambda, m) = (-\frac{a}{2}, 1)$ and $a \in 2\N$.
\item[Case (c):] $(\lambda, m) = (-\frac{a-3}{2}, 1)$ and $a \in 2\N+1$.
\end{enumerate}
In these cases the identities (\ref{second-equations-1}), (\ref{second-equations-2}), (\ref{second-equations-3}) are equivalent to the following ones:
\begin{equation}\label{equations-cases-abc}
\begin{array}{r r}
\text{Case (a):} & ir + s = q = 2ip - (a+1)s = 0.\\[4pt]
\text{Case (b):} & 2ir - 3s =
2ip + q + as = 0. \\[4pt]
\text{Case (c):} & ir + s = q + s = 2ip + (a+1)q = 0.
\end{array}
\end{equation}

Note that (\ref{first-equations-1-again}) and (\ref{first-equations-2-again}) are precisely (\ref{p_and_q_relation}) and (\ref{r_and_s_relation}), which are satisfied by assumption; and that (\ref{coefficients-qs-again}) is precisely (\ref{coefficients-qs}). Thus, if $m>1$ and $\lambda \in \{-a, 2-a\}$, (\ref{coefficients-qs}) holds. If $\lambda = 1-a$, one can check that (\ref{second-equations-2}) coincides with (\ref{coefficients-qs}).

By the same reasoning, if $m =1$ (and $\lambda \in \C$), (\ref{coefficients-qs}) is also satisfied unless one of the cases (a)--(c) holds. In case (b), although (\ref{first-equations-1-again}) cannot be deduced from $A_\varepsilon = A_{\varepsilon + 2} = A_{\varepsilon + 4} = 0$, it holds by assumption (by (\ref{p_and_q_relation})), which together with (\ref{equations-cases-abc})(b) can be proved to imply (\ref{coefficients-qs}). In cases (a) and (c), by (\ref{equations-cases-abc})(a) and (\ref{equations-cases-abc})(c), (\ref{coefficients-ps-special-case-1}) and (\ref{coefficients-ps-special-case-2}) are satisfied respectively. Thus, we have proved that if the equation (\ref{the-equation-pqrs-twosided}) is satisfied, then $(\lambda, m)$ belongs to Cases (I) or (II), and $(p,q,r,s)$ satisfies (\ref{coefficients-ps-special-case-1}), (\ref{coefficients-ps-special-case-2}) and (\ref{coefficients-qs}).\\

(ii) $\Rightarrow$ (i) Now we suppose that $(\lambda, m)$ belongs to Cases (I) or (II) and that (\ref{coefficients-ps-special-case-1}), (\ref{coefficients-ps-special-case-2}) and (\ref{coefficients-qs}) hold. We wish to show that $(p,q,r,s)$ satisfies (\ref{the-equation-pqrs-twosided}).\\

\underline{Case (I)}. Suppose $m = 1$ and suppose in addition that $\gamma(\lambda, a-1) \neq 0$, i.e., $a\in 2\N$, or $a \in 2\N+1$ and $\lambda \neq -\frac{a-1}{2}$. Then, by using (\ref{p_and_q_relation}), the left-hand side of (\ref{the-equation-pqrs-twosided}) amounts to 
\begin{equation*}
\begin{aligned}
(a+1)^{-1}p\left((a+1)\Geg_{a+1}^{\lambda-1}(it) -2i\gamma(\lambda, a-1)\Geg_{a}^\lambda(it)\right) & = -2(a+1)^{-1}p\Geg_{a-1}^\lambda(it) \\
& = -i\gamma(\lambda, a-1)^{-1}q\Geg_{a-1}^\lambda(it),
\end{aligned}
\end{equation*}
where in the first equality we used the identity (\ref{KKP-2}).
On the other hand, observe that (\ref{coefficients-qs}) amounts to $q + \left(\lambda + \left[\frac{a-1}{2}\right]\right)s = 0$, which is satisfied even if $a \in 2\N+1$ and $\lambda = - \frac{a-3}{2}$ by (\ref{coefficients-ps-special-case-2}). Thus, by $(\ref{coefficients-qs})$ and by the relation $\gamma(\lambda, a-1)\gamma(\lambda, a-2) = \left(\lambda + \left[\frac{a-1}{2}\right]\right)$, the left-hand side of (\ref{the-equation-pqrs-twosided}) amounts to
\begin{equation}\label{left-hand-side-m=1}
i\gamma(\lambda, a-2)s\Geg_{a-1}^\lambda(it).
\end{equation}
Now, observe that (\ref{r_and_s_relation}) together with (\ref{coefficients-ps-special-case-2}) can be rewritten as $ir + \gamma(\lambda-\frac{1}{2}, a-2)s = 0$. Thus, by (\ref{KKP-3}), the right-hand side of (\ref{the-equation-pqrs-twosided}) amounts to
\begin{equation}\label{right-hand-side-m=1}
s\left(i\gamma(\lambda-\frac{1}{2}, a-2)\Geg_{a-1}^{\lambda-1}(it) - t\Geg_{a-2}^\lambda(it)\right) = i\gamma(\lambda, a-2)s\Geg_{a-1}^\lambda(it),
\end{equation}
which is precisely (\ref{left-hand-side-m=1}).

If $\gamma(\lambda, a-1) = 0$, i.e., $a\in 2\N+1$ and $\lambda = -\frac{a-1}{2}$, then, by (\ref{KKP-2}) and (\ref{coefficients-ps-special-case-1}), the left-hand side of (\ref{the-equation-pqrs-twosided}) amounts to
\begin{equation*}
\begin{aligned}
(a+1)^{-1}p\left((a+1)\Geg_{a+1}^{\lambda-1}(it) -2i\gamma(\lambda, a-1)\Geg_{a}^\lambda(it)\right) & = -2(a+1)^{-1}p\Geg_{a-1}^\lambda(it) \\
& = is\Geg_{a-1}^\lambda(it).
\end{aligned}
\end{equation*}
On the other hand, (\ref{p_and_q_relation}) coincides with $ir + \gamma(\lambda-\frac{1}{2}, a-2)s = 0$. Thus, as in the previous case, by using (\ref{KKP-3}) the right-hand side of (\ref{the-equation-pqrs-twosided}) is (\ref{right-hand-side-m=1}), which clearly coincides with the left-hand side since $a\in 2\N+1$.\\
 
\underline{Case (II)}. Suppose now $m>1$ and $\lambda \in \{-a, 1-a, 2-a\}$. If $\lambda = 2-a$, observe that since $a \geq m+1$, then $\gamma(\lambda, a+m-2)\neq 0$. As in the previous case, by using (\ref{KKP-2}) and (\ref{p_and_q_relation}), the left-hand side of (\ref{the-equation-pqrs-twosided}) amounts to
\begin{equation*}
\begin{aligned}
&(a+m)^{-1}p\left((a+m)\Geg_{a+m}^{\lambda-1}(it) -2i\gamma(\lambda-1, a+m)\Geg_{a+m-1}^\lambda(it)\right) \\
& = -2(a+m)^{-1}p\Geg_{a-1}^\lambda(it)\\
& = -i\gamma(\lambda-1, a+m)^{-1}q\Geg_{a-1}^\lambda(it).
\end{aligned}
\end{equation*}
Now, by (\ref{identity-KOSS}), we have 
\begin{equation*}
\frac{\Gamma\left(\lambda + \left[\frac{a+m-1}{2}\right]\right)}{\Gamma\left(\lambda + \left[\frac{a-m-1}{2}\right]\right)}\Geg_{a+m-2}^\lambda(it) = \Geg_{a-m-2}^\lambda(it).
\end{equation*}
This, together with (\ref{coefficients-qs}), (\ref{r_and_s_relation}) and the following equality
\begin{equation*}
\begin{gathered}
\frac{\Gamma\left(\lambda + \left[\frac{a+m}{2}\right]\right)}{\Gamma\left(\lambda + \left[\frac{a-m}{2}\right]\right)} \gamma(\lambda, a-m-2)
= \frac{\Gamma\left(\lambda + \left[\frac{a+m-1}{2}\right]\right)}{\Gamma\left(\lambda + \left[\frac{a-m-1}{2}\right]\right)}\gamma(\lambda, a+m-2),
\end{gathered}
\end{equation*}
show that the left-hand side of (\ref{the-equation-pqrs-twosided}) can be written as
\begin{equation}
-2(a-m)^{-1}r\Geg_{a-m-2}^\lambda(it).
\end{equation}
On the other hand, by (\ref{r_and_s_relation}), the right-hand side of (\ref{the-equation-pqrs-twosided}) amounts to
\begin{equation*}
(a-m)^{-1}r\left(\Geg_{a-m}^{\lambda-1}(it) - 2i\gamma(\lambda-1, a-m)t\Geg_{a-m-1}^\lambda(it)\right).
\end{equation*}
Now, both sides clearly coincide by (\ref{KKP-2}).

If $\lambda = -a$, the argument is similar but we apply it in different order. We first apply (\ref{identity-KOSS}) to the left-hand side of (\ref{the-equation-pqrs-twosided}) and then simplify it by using Lemma \ref{Lemma-KKP-identities}. First, by (\ref{identity-KOSS}) we have
\begin{equation}\label{proof-equation-KOSS}
\frac{\Gamma\left(\lambda + \left[\frac{a+m-1}{2}\right]\right)}{\Gamma\left(\lambda + \left[\frac{a-m+1}{2}\right]\right)} \Geg_{a+m}^{\lambda-1}(it) = \Geg_{a-m+2}^{\lambda-1}(it), \quad \frac{\Gamma\left(\lambda + \left[\frac{a+m}{2}\right]\right)}{\Gamma\left(\lambda + \left[\frac{a-m+2}{2}\right]\right)} \Geg_{a+m-1}^{\lambda}(it) = \Geg_{a-m+1}^{\lambda}(it).
\end{equation}
Now, by using (\ref{p_and_q_relation}), (\ref{coefficients-qs}), and the equality
\begin{equation*}
\frac{\Gamma\left(\lambda + \left[\frac{a+m-1}{2}\right]\right)}{\Gamma\left(\lambda + \left[\frac{a-m+1}{2}\right]\right)} \gamma(\lambda, a+m-2) = 
\frac{\Gamma\left(\lambda + \left[\frac{a+m}{2}\right]\right)}{\Gamma\left(\lambda + \left[\frac{a-m+2}{2}\right]\right)} \gamma(\lambda, a-m),
\end{equation*}
the left-hand side of (\ref{the-equation-pqrs-twosided}) amounts to
\begin{equation*}
\begin{aligned}
&-i\gamma(\lambda, a+m-2)s\left((a-m+2)\Geg_{a+m}^{\lambda-1} -2i\gamma(\lambda, a-m)t\Geg_{a-m+1}^{\lambda}(it)\right)\\
&= i\gamma(\lambda, a-m-1)s\Geg_{a-m}^\lambda(it),
\end{aligned}
\end{equation*}
where in the last equality we used (\ref{KKP-2}). On the other hand, observe that (\ref{r_and_s_relation}) can be rewritten as
\begin{equation*}
ir + \gamma(\lambda - \frac{1}{2}, a-m-1)s = 0.
\end{equation*}
Thus, the right-hand side of (\ref{the-equation-pqrs-twosided}) amounts to
\begin{equation*}
s\left(i\gamma(\lambda - \frac{1}{2}, a-m-1)\Geg_{a-m}^{\lambda-1}(it) - t\Geg_{a-m-1}^\lambda(it) \right),
\end{equation*}
which coincides with the left-hand side by (\ref{KKP-3}).

For the last case, namely $\lambda = 1-a$, we use a different approach. A direct computation shows that the terms $A_{a+m-2k}$ satisfy the following recursive relation:
\begin{equation*}
A_{a+m-2(k-1)} = C_1\cdot A_{a+m-2k} +
C_2 \cdot \Big
[q + \frac{\Gamma\left(\lambda + \left[\frac{a+m}{2}\right]\right)}{\Gamma\left(\lambda + \left[\frac{a-m}{2}\right]\right)}s\Big],
\end{equation*}
for $k = m+1, \dots, \left[\frac{a+m}{2}\right]$, where $C_1$ and $C_2$ are appropiate non-zero constants depeding on $k, a$ and $m$. Thus, (\ref{the-equation-pqrs-twosided}) is satisfied as soon as $A_\varepsilon = 0$ and (\ref{coefficients-qs}) holds, but the condition $A_\varepsilon = 0$ can be easily proved from (\ref{p_and_q_relation}), (\ref{r_and_s_relation}) and (\ref{coefficients-qs}) by Lemma \ref{lemma-lowerterms} (i). Hence, (\ref{the-equation-pqrs-twosided}) holds.
\end{proof}

\subsection{Proof of Theorem \ref{Thm-solvingequations}}
\label{section-proof_of_solving_equations-subsection}
Now, we are ready to show Theorem \ref{Thm-solvingequations}.
\begin{proof}[Proof of Theorem \ref{Thm-solvingequations}]
Take polynomials
\begin{equation}\label{polynomial-even-order}
f_j \in \Pol_{a-m+1-j}[t]_\text{{\normalfont{even}}}, \enspace \text{ for } j=0,1,2.
\end{equation} 
We show that $(f_0, f_1, f_2)$ satisfies equations $(E_1)$-$(E_6)$ (see the beginning of Section \ref{section-proof_of_solving_equations}) if and only if the triple $(\lambda, a, m)$ belongs to one of the cases (1)--(3) of the statement. Moreover, we prove that if $(f_0, f_1, f_2) \in \Xi_{\lambda, a, m}$, then
\begin{equation*}
(f_0, f_1, f_2)= \alpha (g_{m-1}, g_m, g_{m+1}),
\end{equation*}
for some $\alpha \in \C$, where $(g_{m-1}, g_m, g_{m+1})$ is the triple given in (\ref{solution-all}).





If $a < m-1$, then $a-m+1-j < m-1-m+1 -j = -j \leq 0$. Thus, $\Pol_{a-m+1-j}[t]_\text{even} = \{0\}$. Then, it suffices to consider the case $a \geq m-1$. We divide the proof into the following three cases:
\begin{enumerate}[leftmargin=1cm, topsep=0pt]
\item[(1)] $a = m-1$.
\item[(2)] $a = m$.
\item[(3)] $a \geq m+1$.
\end{enumerate}
\vspace{0.1cm}
We start by (1) and consider next (3). The case (2) will be done at the end.

(1) Suppose $a = m-1$. By (\ref{polynomial-even-order}), it is clear that, a priori, any solution has to be a constant multiple of $(1, 0, 0)$. Now, any multiple of this tuple satisfies $(E_k) \enspace k = 2,3,4,6$ trivially and it clearly satisfies $(E_1)$ if and only if $(2m-2)(2m-2+2\lambda) = 0$, or equivalently, if $m = 1$ (and $\lambda \in \C$), or $m > 1$ and $\lambda = 1-m = -a$. A similar argument holds for $(E_5)$. Thus, if $a = m-1$, the system has a non-zero solution only in the cases (1.I) $m = 1$ and $\lambda \in \C$, or (1.II) $m >1$ and $\lambda = -a$, and the solution up to multiple is $(1,0,0)$.

(3) Suppose $a \geq m+1$. By (\ref{expression_f0-and-f2}) and Lemmas \ref{lemma-step-1} and \ref{lemma-step-2}, we know that $(f_0, f_1, f_2)$ satisfies $(E_1)$--$(E_6)$ if and only if
\begin{equation}\label{expression_f0-and-f2-again}
f_0(t) = q\Geg_{a+m-1}^\lambda(it), \quad f_2(t) = s\Geg_{a-m-1}^\lambda(it),
\end{equation}
\begin{equation}\label{expression_f1_1_again}
f_1(t) = -p\Geg_{a+m}^{\lambda-1}(it) -qt\Geg_{a+m-1}^\lambda(it)
\end{equation}
\begin{equation}\label{expression_f1_2_again}
\hspace{0.9cm} = -r\Geg_{a-m}^{\lambda -1}(it) + st\Geg_{a-m-1}^\lambda(it),
\end{equation}
where $p, q, r, s \in \C$ satisfy (\ref{p_and_q_relation}) and (\ref{r_and_s_relation}).
Moreover, by Lemma \ref{lemma-equation-pqrs}, the two expressions for $f_1$ in (\ref{expression_f1_1_again}) and (\ref{expression_f1_2_again}) coincide if and only if $(\lambda, m)$ belongs to Case (3.I) or Case (3.II) and $(p,q,r,s)$ satisfies (\ref{coefficients-ps-special-case-1}), (\ref{coefficients-ps-special-case-2}) and (\ref{coefficients-qs}).\\
Suppose $(\lambda, m)$ belongs to Case (3.I); that is, $m = 1$ and $\lambda \in \C$. Observe that by (\ref{p_and_q_relation}) and (\ref{coefficients-ps-special-case-1}) we have
\begin{equation*}
-2(a+1)^{-1}p = \begin{cases}
\left(i\gamma(\lambda-1, a+1)\right)^{-1}q, & \text{if } a\in 2\N \text{ or } \lambda \neq -\frac{a-1}{2},\\
is, & \text{if } a\in 2\N+1 \text{ and } \lambda = -\frac{a-1}{2}.
\end{cases}
\end{equation*}
By using (\ref{coefficients-qs}) and the identity $\gamma(\lambda-1, a)\gamma(\lambda-1, a+1) = \lambda + \left[\frac{a-1}{2}\right]$, $-2(a+1)^{-1}p$ can be written uniformly as
\begin{equation*}
-2(a+1)^{-1}p = i \gamma(\lambda-1,a)s.
\end{equation*}
Then, by (\ref{expression_f1_1_again}), (\ref{p_and_q_relation}) and by using (\ref{KKP-2}), we deduce $f_1(t) = -i\gamma(\lambda-1, a)s\Geg_{a-1}^\lambda(it)$. 
Thus, by (\ref{expression_f0-and-f2-again}) and (\ref{coefficients-qs}) we have
\begin{equation*}
\begin{aligned}
\left(f_0(t), f_1(t), f_2(t)\right) & = s\left(-\left(\lambda + \left[\frac{a-1}{2}\right]\right)\Geg_a^\lambda(it), -i\gamma(\lambda-1, a)\Geg_{a-1}^\lambda(it), \Geg_{a-2}^\lambda(it)\right)\\[4pt]
& = s\left(g_{m-1}(t), g_m(t), g_{m+1}(t)\right).
\end{aligned}
\end{equation*}

Suppose now that $(\lambda, m)$ belongs to Case (3.II); that is, $m > 1$ and $\lambda \in \{-a, 1-a, 2-a\}$. Observe that since $a \geq m+1 > 2$, we have $\gamma(\lambda-1, a-m) \neq 0$. Then by (\ref{expression_f0-and-f2-again}), (\ref{expression_f1_2_again}),  (\ref{r_and_s_relation}) and (\ref{coefficients-qs}) we have
\begin{equation*}
\begin{aligned}
f_0(t) &=  2i\gamma(\lambda, a-m-2)\alpha\frac{\Gamma\left(\lambda + \left[\frac{a+m}{2}\right]\right)}{\Gamma\left(\lambda + \left[\frac{a-m}{2}\right]\right)}\widetilde{C}^\lambda_{a+m-1}(it), \\
f_1(t) &= \alpha\left(-(m(\lambda + a-1) + \lambda -2) \Geg_{a-m}^{\lambda-1}(it) - 2i\gamma(\lambda-1, a-m)t \Geg_{a-m-1}^\lambda(it)\right),\\
f_2(t) &= -2i\gamma(\lambda, a-m-2)\alpha\widetilde{C}^\lambda_{a-m-1}(it),
\end{aligned}
\end{equation*}
where $\alpha = \left(-2i\gamma(\lambda-1, a-m)\right)^{-1}s$.
Now, simplifying $f_0$ by using (\ref{identity-KOSS}), the triple above amounts to
\begin{equation*}
\begin{aligned}
&(f_0(t), f_1(t), f_2(t)) \\
& = \alpha \left(-iAB  \widetilde{C}^\lambda_{a-m+1-2\nu}(it), -C\Geg_{a-m}^{\lambda-1}(it) + iBt \Geg_{a-m-1}^\lambda(it), iB \widetilde{C}^\lambda_{a-m-1}(it)\right) \\
& = \alpha \left(g_{m-1}(t), g_m(t), g_{m+1}(t)\right),
\end{aligned}
\end{equation*}
where $A, B$ and $C$ are the constants given in (\ref{const-A}), (\ref{const-B}) and (\ref{const-C}), respectively.\\

(2) Suppose $a = m$. The argument is essentially the same as in the case $a \geq m+1$. However, instead of using Lemma \ref{lemma-equation-pqrs}, we prove (\ref{the-equation-pqrs-twosided}) directly by doing some computations. As before, by Lemma \ref{lemma-step-1} and Lemma \ref{lemma-step-2} (1), $(f_0, f_1, f_2)$ satisfies $(E_1)$--$(E_6)$ if and only if $f_0, f_2$ are given by (\ref{expression_f0-and-f2-again}) and $f_1$ is given by (\ref{expression_f1_1_again}) and (\ref{expression_f1_2_again}), for some constants $p, q, r$ satisfying (\ref{p_and_q_relation}). Note that $f_2 = 0$ (i.e. $s = 0$). Now, by Lemma \ref{lemma-lambda-sets}, a necessary condition such that (\ref{expression_f1_1_again}) and (\ref{expression_f1_2_again}) 
coincide is that $m = 1$ and $\lambda \in\C$, or $m > 1$ and one of the conditions ($\star_1$)--($\star_4$) in Lemma \ref{lemma-lambda-sets} is satisfied.\\
If $m = 1$ and $\lambda \in \C$, by (\ref{p_and_q_relation}), (\ref{expression_f1_1_again}) amounts to
\begin{equation*}
f_1(t) = p\left(-\Geg_2^{\lambda-1}(it) +i\lambda t\Geg_1^\lambda(it)\right),
\end{equation*}
which coincides with (\ref{expression_f1_2_again}) if and only if $r = -p$ by (\ref{KKP-2}). Thus,
\begin{equation*}
\begin{aligned}
\left(f_0(t), f_1(t), f_2(t)\right) & = \left(-i\lambda p\Geg_1^\lambda(it), p, 0\right)\\
& = ip\left(g_{0}(t), g_1(t), g_{2}(t)\right).
\end{aligned}
\end{equation*}
Suppose now $m > 1$. Since $\Lambda_{a-m+2}^m = \Lambda_2^m = \emptyset$, and since if $(p,q) = (0,0)$, then there is no non-zero solution of the system, we only have the cases $(\star_2)$ and $(\star_3)$, which correspond to the cases $\lambda = -a$ and $q = \lambda + m-1 = 0$. In the case $\lambda = -a$, by (\ref{p_and_q_relation}), (\ref{expression_f1_2_again}) amounts to
\begin{equation*}
f_1(t) = -p\left(\Geg_{2m}^{-m-1}(it) + it\Geg_{2m-1}^{-m}(it)\right).
\end{equation*}
Following a similar strategy as in the proof of (ii) $\Rightarrow$ (i) in Lemma \ref{lemma-equation-pqrs}, by using (\ref{identity-KOSS}) and (\ref{KKP-2}), we have
\begin{equation*}
f_1(t) = \frac{(-1)^{m}p}{m!}\left(\Geg_{2}^{-m-1}(it) + imt\Geg_{1}^{-m}(it)\right) = \frac{(-1)^{m+1}p}{m!},
\end{equation*}
which coincides with (\ref{expression_f1_2_again}) if and only if $p + (-1)^{m+1}m! r = 0$. Now, by using (\ref{identity-KOSS})
again with $f_0$, we have
\begin{equation*}
\begin{aligned}
(f_0(t), f_1(t), f_2(t)) 
& = \frac{(-1)^{m+1} p}{m!}\left(im\Geg_{1}^{-m}(it), 1, 0\right)\\[4pt]
& = \frac{(-1)^{m+1} p}{2(m+1)!}\left(g_{m-1}(t), g_m(t), g_{m+1}(t)\right).
\end{aligned}
\end{equation*}

In the case $q = 0$ and $\lambda = 1-m = 1-a$, we have $f_0 = f_2 = 0$ and (\ref{expression_f1_1_again}) amounts to
\begin{equation*}
f_1(t) = -p\Geg_{2m}^{-m}(it) = \frac{(-1)^{m+1} p}{m!},
\end{equation*}
where in the second equality we used (\ref{identity-KOSS}). Thus, again, (\ref{the-equation-pqrs-twosided}) holds if and only if $p + (-1)^{m+1}m! r = 0$ and
\begin{equation*}
\begin{aligned}
(f_0(t), f_1(t), f_2(t)) 
& = \frac{(-1)^{m+1} p}{m!}\left(0, 1, 0\right)\\[4pt]
& = \frac{(-1)^{m+1} p}{(m+1)!}\left(g_{m-1}(t), g_m(t), g_{m+1}(t)\right).
\end{aligned}
\end{equation*}
Hence, also in the case $a = m$, $(f_0, f_1, f_2)$ is a solution of the system if and only if it is a multiple of $\left(g_{m-1}(t), g_m(t), g_{m+1}(t)\right)$ given in (\ref{solution-all}) and $(\lambda, m)$ belongs to Case (2.I) or Case (2.II).
\end{proof}
\begin{rem}
In the proof above we obtained $f_1$ by using the left-hand side of (\ref{the-equation-pqrs-twosided}) when $m=1$, and the right-hand side when $m>1$. The reason for doing this distinction is to obtain the simplest formula for the solution of the system. In fact, as the expression below shows, a single formula for the solution can be obtained by using the right-hand side of (\ref{the-equation-pqrs-twosided}) in both $m = 1$ and $m>1$ cases. The disadvantage of doing this is that the expression is more complicated than the one given in (\ref{solution-all}).
\begin{equation}\label{solution-1}
\left(-i\widetilde{A}\widetilde{B} \widetilde{C}^\lambda_{a+m-1}(it), -\widetilde{C} \Geg_{a-m}^{\lambda-1}(it) + i\widetilde{B}t \Geg_{a-m-1}^\lambda(it), i\widetilde{B} \widetilde{C}^\lambda_{a-m-1}(it)\right)
\end{equation}
Here $\widetilde{A}, \widetilde{B}, \widetilde{C}$ are the following constants:
\begin{equation*}
\begin{aligned}
\widetilde{A} := & \enspace \frac{\Gamma\left(\lambda + \left[\frac{a+m}{2}\right]\right)}{\Gamma\left(\lambda + \left[\frac{a-m}{2}\right]\right)},\\[4pt]
\widetilde{B} := & \enspace \begin{cases}
-1, &\text{if } m =1,\enspace a \in 2\N+1, \text{ and } \lambda = -\frac{a-3}{2},\\
-2\gamma(\lambda-1, a-m), &\text{otherwise},
\end{cases}\\
\widetilde{C}:= & \enspace \begin{cases}
1, &\text{if } m =1,\enspace a \in 2\N+1, \text{ and } \lambda = -\frac{a-3}{2},\\
m(\lambda+a-1) + \lambda -2, &\text{otherwise}.
\end{cases}
\end{aligned}
\end{equation*}
\end{rem}

\section{The case $m\leq -1$}\label{section-case_m_lessthan_-1}
In this section we prove that there is a duality between the F-systems for $m \geq 1$ and for $m \leq -1$ (Proposition \ref{prop-duality_m_and_-m}). This result, together with Theorem \ref{Thm-step2}, proves Theorems \ref{mainthm1} and \ref{mainthm2} for $m \leq -1$. We start by fixing some notation.

Let $m \in -\N_+$ and write $m = -p$ for $p \in \N_+$. Given $a\in \N$, we define
\begin{equation*}
\psi^\pm: \bigoplus_{k = p-1}^{p+1} \Pol_{a-k}[t]_\text{{\normalfont{even}}} \rightarrow \Hom_{SO(2)}\left(V^3, \C_{\pm p} \otimes \Pol^a(\n_+)\right)
\end{equation*}
by
\begin{equation*}
\psi^\pm(g_{p-1}, g_p, g_{p+1})(\zeta) = \sum_{k = p-1}^{p+1}\left(T_{a-k}g_k\right)(\zeta)h_k^\pm,
\end{equation*}
where $h_k^\pm$ are the generators defined in (\ref{generators-Hom(V3,CmHk)}). In coordinates we have
\begin{equation*}
\begin{gathered}
\psi^+(g_{p-1}, g_{p}, g_{p+1})(\zeta) =
\begin{pmatrix}
\displaystyle{(T_{a-p+1}g_{p-1})(\zeta) (\zeta_1 + i\zeta_2)^{p-1}}\\[4pt]
\displaystyle{(T_{a-p}g_{p})(\zeta) (\zeta_1 + i\zeta_2)^{p}}\\[4pt]
\displaystyle{(T_{a-p-1}g_{p+1})(\zeta) (\zeta_1 + i\zeta_2)^{p+1}}
\end{pmatrix},\\[4pt]
\psi^-(g_{
p-1}, g_{p}, g_{p+1})(\zeta) =
\begin{pmatrix}
\displaystyle{(T_{a-p-1}g_{p+1})(\zeta) (\zeta_1 - i\zeta_2)^{p+1}}\\[4pt]
\displaystyle{(T_{a-p}g_{p})(\zeta) (\zeta_1 - i\zeta_2)^{p}}\\[4pt]
\displaystyle{(T_{a-p+1}g_{p-1})(\zeta) (\zeta_1 - i\zeta_2)^{p-1}}
\end{pmatrix}.
\end{gathered}
\end{equation*}
For simplicity, for a given $(g_{p-1}, g_p, g_{p+1})$ we also write
\begin{equation*}
\psi^\pm(\zeta) = \psi^\pm(g_{p-1}, g_p, g_{p+1})(\zeta). 
\end{equation*}

Let $\Phi$ be an involution on $\Hom_\C\left(V^3, \Pol(\n_+)\right)$ defined by
\begin{equation*}
\begin{gathered}
\Phi(\varphi)(\zeta_1, \zeta_2, \zeta_3) = 
\begin{pmatrix}
& & 1\\
& -1 &\\
1 & &
\end{pmatrix} \varphi(\zeta_1, -\zeta_2, \zeta_3),
\end{gathered}
\end{equation*}
for $\varphi(\zeta) = {}^t(\varphi_1(\zeta),\varphi_2(\zeta), \varphi_3(\zeta)) \in \Hom_\C\left(V^3, \Pol(\n_+)\right)$.
Note that for $\varphi = \psi^+(\zeta) \equiv \psi^+(g_{p-1}, g_p, g_{p+1})(\zeta)$, we have
\begin{equation}\label{relation-Phi(psi+)-and-psi-}
\Phi(\psi^+)(\zeta_1, \zeta_2, \zeta_3) = \psi^-(g_{p-1}, -g_p, g_{p+1})(\zeta_1, \zeta_2, \zeta_3).
\end{equation}

\begin{lemma}\label{lemma-duality_m_and_-m}
Let $C_1^+$ be the element of $\n_+$ defined in {\normalfont{(\ref{elements})}}. Then, we have
\begin{equation}\label{interwtining-property}
\Phi\left(\left(\widehat{d \pi_{(\sigma^3, \lambda)^*}}(C_1^+) \otimes \id_{\C_{p,\nu}}\right)\psi^+\right) = \left(\widehat{d \pi_{(\sigma^3, \lambda)^*}}(C_1^+) \otimes \id_{\C_{-p,\nu}}\right)\Phi(\psi^+).
\end{equation}
\end{lemma}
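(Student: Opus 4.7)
The plan is to reduce identity~\eqref{interwtining-property} to the statement that $\Phi$ commutes with the operator $\widehat{d\pi_{(\sigma^3,\lambda)^*}}(C_1^+)$ on $\Hom_\C(V^3, \Pol(\n_+))$. First I will write $\Phi(\varphi)(\zeta) = J\,\varphi(\zeta_1, -\zeta_2, \zeta_3)$, where $J$ is the constant $3\times 3$ matrix displayed in the definition of $\Phi$, and introduce the auxiliary substitution $\tau$ defined by $(\tau F)(\zeta) := F(\zeta_1, -\zeta_2, \zeta_3)$ on polynomials. Since the factor $\id_{\C_{\pm p,\nu}}$ on the one-dimensional space $\C_{\pm p, \nu}$ does not affect the action of $\widehat{d\pi_{(\sigma^3,\lambda)^*}}(C_1^+) \otimes \id$, establishing \eqref{interwtining-property} is equivalent to proving the operator identity
\begin{equation*}
\Phi \circ \widehat{d\pi_{(\sigma^3,\lambda)^*}}(C_1^+) \;=\; \widehat{d\pi_{(\sigma^3,\lambda)^*}}(C_1^+) \circ \Phi.
\end{equation*}

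Next I will split the operator into its scalar and vector parts according to the decomposition \eqref{dpimu-decomposition}. By \eqref{dpimu-scalar-vect-formulas}, the scalar part is $(2\lambda + 2E_\zeta)\frac{\partial}{\partial \zeta_1} - \zeta_1 \Delta^\zeta_{\C^3}$ tensored with $\id_{(V^3)^\vee}$. Each summand contains $\zeta_2$ and $\partial/\partial\zeta_2$ only in even combinations, so it commutes with $\tau$; and being a scalar operator on $(V^3)^\vee$, it also commutes with the constant matrix $J$. Hence the scalar part commutes with $\Phi$ automatically.

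The key step is to handle the vector part $M := \widehat{d\pi_{(\sigma^3,\lambda)^*}^{\text{vect}}}(C_1^+)$, whose explicit $3\times 3$ matrix is given in \eqref{Ms-vect}. Using the commutation rules $\tau \frac{\partial}{\partial\zeta_2} = -\frac{\partial}{\partial\zeta_2} \tau$ and $\tau \frac{\partial}{\partial \zeta_3} = \frac{\partial}{\partial \zeta_3} \tau$, one gets $\tau \circ M = M' \circ \tau$, where $M'$ is obtained from $M$ by flipping the sign of $\partial/\partial\zeta_2$ in every entry. Consequently, $\Phi \circ M = J M' \tau$ while $M \circ \Phi = M J \tau$, so the problem reduces to the purely algebraic matrix identity
\begin{equation*}
J M' \;=\; M J.
\end{equation*}
This I will verify by direct computation of both sides. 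The only nonzero entries of $M$ are at positions $(1,1), (3,3)$ (carrying $\mp 2i \partial/\partial\zeta_2$) and $(1,2),(2,1),(2,3),(3,2)$ (carrying $\pm 2 \partial/\partial\zeta_3$ or $\pm \partial/\partial\zeta_3$), so the check amounts to a $3\times 3$ inspection: left-multiplication by $J$ swaps rows $1$ and $3$ and flips the sign of row $2$, which is exactly compensated by the swap of columns and sign flip produced by right-multiplication by $J$ together with the passage from $M$ to $M'$.

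The only real obstacle is this bookkeeping of signs and the swap pattern, but it is a finite mechanical computation. Once $JM' = MJ$ is verified, combining with the scalar-part argument yields $\Phi \circ \widehat{d\pi_{(\sigma^3,\lambda)^*}}(C_1^+) = \widehat{d\pi_{(\sigma^3,\lambda)^*}}(C_1^+) \circ \Phi$, which tensored with $\id_{\C_{\pm p,\nu}}$ is precisely~\eqref{interwtining-property}.
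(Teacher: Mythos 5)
Your proposal is correct and follows essentially the same route as the paper: both split $\widehat{d\pi_{(\sigma^3,\lambda)^*}}(C_1^+)$ into its scalar and vector parts, dispose of the scalar part by noting it involves $\zeta_2$ and $\partial/\partial\zeta_2$ only in even combinations (so it commutes with the substitution $\zeta_2\mapsto-\zeta_2$ and, being scalar, with the constant matrix), and verify the vector part by a direct computation with the matrix of (\ref{Ms-vect}). Your packaging of the vector-part check as the conjugation identity $JM'=MJ$ is a clean equivalent of the paper's componentwise verification, and the identity does hold.
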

\begin{proof}
Let $M_s(\psi^+)(\zeta)$ and $M_s(\Phi(\psi^+))(\zeta)$ denote the vector coefficients of
\begin{equation*}
\begin{gathered}
\left(\widehat{d \pi_{(\sigma^3, \lambda)^*}}(C_1^+) \otimes \id_{\C_{p,\nu}}\right)\psi^+
\end{gathered}
\end{equation*}
and
\begin{equation*}
\left(\widehat{d \pi_{(\sigma^3, \lambda)^*}}(C_1^+) \otimes \id_{\C_{-p,\nu}}\right)\Phi(\psi^+)
\end{equation*}
respectively (see (\ref{vector-coefficients-scalar}), (\ref{vector-coefficients-vect})). Then, in order to prove the lemma it suffices to show
\begin{equation*}
\begin{pmatrix}
M_3(\psi^+)(\zeta_1, -\zeta_2, \zeta_3)\\[6pt]
-M_2(\psi^+)(\zeta_1, -\zeta_2, \zeta_3)\\[6pt]
M_1(\psi^+)(\zeta_1, -\zeta_2, \zeta_3)
\end{pmatrix} = 
\begin{pmatrix}
M_1(\Phi(\psi^+))(\zeta_1, \zeta_2, \zeta_3)\\[6pt]
M_2(\Phi(\psi^+))(\zeta_1, \zeta_2, \zeta_3)\\[6pt]
M_3(\Phi(\psi^+))(\zeta_1, \zeta_2, \zeta_3)\\
\end{pmatrix}.
\end{equation*}
Since $M_s = M_s^\text{{\normalfont{scalar}}} + M_s^\text{{\normalfont{vect}}}$, we consider the scalar parts $M_s^\text{{\normalfont{scalar}}}$ and the vector parts $M_s^\text{{\normalfont{vect}}}$ of each side of the identity above separately.

1) \underline{$M_s^\text{{\normalfont{scalar}}}$}: For $\varphi(\zeta_1, \zeta_2, \zeta_3) \in \Hom_\C\left(V^3, \Pol(\n_+)\right)$ we let
\begin{equation*}
(\alpha_2\varphi)(\zeta_1, \zeta_2, \zeta_3) = \varphi(\zeta_1, -\zeta_2, \zeta_3).
\end{equation*}
Then, we have
\begin{equation*}
\begin{gathered}
\Delta_{\C^3} \circ \alpha_2 = \alpha_2 \circ \Delta_{\C^3},\\
E_\zeta \circ \alpha_2 =  \alpha_2 \circ E_\zeta,
\end{gathered}
\end{equation*}
which implies that
\begin{equation*}
\widehat{d\pi_{(\sigma^3, \lambda)^*}^\text{{\normalfont{scalar}}}}(C_1^+) \circ \alpha_2 = \alpha_2 \circ \widehat{d\pi_{(\sigma^3, \lambda)^*}^\text{{\normalfont{scalar}}}}(C_1^+).
\end{equation*}
Therefore
\begin{equation*}
\begin{gathered}
\begin{pmatrix}
M_3^\text{{\normalfont{scalar}}}(\psi^+)(\zeta_1, -\zeta_2, \zeta_3)\\[6pt]
-M_2^\text{{\normalfont{scalar}}}(\psi^+)(\zeta_1, -\zeta_2, \zeta_3)\\[6pt]
M_1^\text{{\normalfont{scalar}}}(\psi^+)(\zeta_1, -\zeta_2, \zeta_3)
\end{pmatrix} = 
\alpha_2
\begin{pmatrix}
M_3^\text{{\normalfont{scalar}}}(\psi^+)(\zeta_1, \zeta_2, \zeta_3)\\[6pt]
- M_2^\text{{\normalfont{scalar}}}(\psi^+)(\zeta_1, \zeta_2, \zeta_3)\\[6pt]
 M_1^\text{{\normalfont{scalar}}}(\psi^+)(\zeta_1, \zeta_2, \zeta_3)
\end{pmatrix} \\[6pt]
=
\begin{pmatrix}
 M_3^\text{{\normalfont{scalar}}}(\alpha_2\psi^+)(\zeta_1, \zeta_2, \zeta_3)\\[6pt]
- M_2^\text{{\normalfont{scalar}}}(\alpha_2\psi^+)(\zeta_1, \zeta_2, \zeta_3)\\[6pt]
 M_1^\text{{\normalfont{scalar}}}(\alpha_2\psi^+)(\zeta_1, \zeta_2, \zeta_3)
\end{pmatrix} 
=
\begin{pmatrix}
 M_1^\text{{\normalfont{scalar}}}(\Phi(\psi^+))(\zeta_1, \zeta_2, \zeta_3)\\[6pt]
 M_2^\text{{\normalfont{scalar}}}(\Phi(\psi^+))(\zeta_1, \zeta_2, \zeta_3)\\[6pt]
 M_3^\text{{\normalfont{scalar}}}(\Phi(\psi^+))(\zeta_1, \zeta_2, \zeta_3)
\end{pmatrix}.
\end{gathered}
\end{equation*}

2) \underline{$M_s^\text{{\normalfont{vect}}}$}:
For the vector parts, by (\ref{Ms-vect}) the left-hand side amounts to
\begin{equation*}
\begin{pmatrix}
M_3^\text{{\normalfont{vect}}}(\psi^+)(\zeta_1, -\zeta_2, \zeta_3)\\[6pt]
-M_2^\text{{\normalfont{vect}}}(\psi^+)(\zeta_1, -\zeta_2, \zeta_3)\\[6pt]
M_1^\text{{\normalfont{vect}}}(\psi^+)(\zeta_1, -\zeta_2, \zeta_3)
\end{pmatrix}
= \begin{pmatrix}
 -2 \frac{\partial}{\partial \zeta_3}\psi_2^+(\zeta_1, -\zeta_2, \zeta_3) + 2i \frac{\partial}{\partial (-\zeta_2)}\psi_3^+(\zeta_1, -\zeta_2, \zeta_3)\\[6pt]
 \frac{\partial}{\partial \zeta_3}\left(\psi_1^+(\zeta_1, -\zeta_2, \zeta_3) - \psi_3^+(\zeta_1, -\zeta_2, \zeta_3)\right)
 \\[6pt] 
 -2i \frac{\partial}{\partial (-\zeta_2)}\psi_1^+(\zeta_1, -\zeta_2, \zeta_3) + 2 \frac{\partial}{\partial \zeta_3}\psi_2^+(\zeta_1, -\zeta_2, \zeta_3)
\end{pmatrix},
\end{equation*}
but by the definition of $\Phi$ and by using the direct formula $\frac{\partial}{\partial (-\zeta_2)} = -\frac{\partial}{\partial \zeta_2}$ this amounts to
\begin{equation*}
\begin{pmatrix}
 2 \frac{\partial}{\partial \zeta_3}\Phi(\psi^+)_2(\zeta_1, \zeta_2, \zeta_3) - 2i \frac{\partial}{\partial \zeta_2}\Phi(\psi^+)_1(\zeta_1, \zeta_2, \zeta_3)\\[6pt]
 \frac{\partial}{\partial \zeta_3}\left(\Phi(\psi^+)_3(\zeta_1, \zeta_2, \zeta_3) - \Phi(\psi^+)_1(\zeta_1, \zeta_2, \zeta_3)\right)
 \\[6pt] 
 2i \frac{\partial}{\partial \zeta_2}\Phi(\psi^+)_3(\zeta_1, \zeta_2, \zeta_3) - 2 \frac{\partial}{\partial \zeta_3}\Phi(\psi^+)_2(\zeta_1, \zeta_2, \zeta_3)
\end{pmatrix},
\end{equation*}
which again by (\ref{Ms-vect}) is nothing but
\begin{equation*}
\begin{pmatrix}
M_1^\text{{\normalfont{vect}}}(\Phi(\psi^+))(\zeta_1, \zeta_2, \zeta_3)\\[6pt]
M_2^\text{{\normalfont{vect}}}(\Phi(\psi^+))(\zeta_1, \zeta_2, \zeta_3)\\[6pt]
M_3^\text{{\normalfont{vect}}}(\Phi(\psi^+))(\zeta_1, \zeta_2, \zeta_3)\\
\end{pmatrix}.
\end{equation*}
Hence, the identity (\ref{interwtining-property}) holds.
\end{proof}
Now, observe that
\begin{equation*}
\Sol(\n_+; \sigma_\lambda^3, \tau_{\pm p, \nu}) \subset \Hom_\C\left(V^3, \Pol(\n_+)\right).
\end{equation*}
We write
\begin{equation*}
\Phi_\text{Sol}^+ := \Phi\big\rvert_{\Sol(\n_+; \sigma_\lambda^3, \tau_{p, \nu})}.
\end{equation*}
By Lemma \ref{lemma-duality_m_and_-m}, we have
\begin{equation*}
\Phi_\text{Sol}^+: \Sol(\n_+; \sigma_\lambda^3, \tau_{p, \nu}) \rightarrow \Sol(\n_+; \sigma_\lambda^3, \tau_{-p, \nu}).
\end{equation*}
Moreover, we have the following
\begin{prop}\label{prop-duality_m_and_-m} The map
\begin{equation}\label{duality-isomorphism}
\begin{gathered}
\Phi_{\normalfont{\text{Sol}}}^+: \Sol(\n_+; \sigma_\lambda^3, \tau_{p, \nu}) \arrowsimeq \Sol(\n_+; \sigma_\lambda^3, \tau_{-p, \nu}),
\end{gathered}
\end{equation}
is a linear isomorphism.
\end{prop}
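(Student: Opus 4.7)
The plan is to check that $\Phi_{\text{Sol}}^+$ is well-defined, linear, and bijective; the substantive work has already been absorbed into Lemma \ref{lemma-duality_m_and_-m}, so this reduces to fairly formal verifications. Linearity is immediate: $\Phi$ is the composition of left multiplication by a fixed $3 \times 3$ matrix and the substitution $\zeta_2 \mapsto -\zeta_2$, both of which are $\C$-linear operations on $\Hom_\C(V^3, \Pol(\n_+))$.

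For well-definedness I would verify the two conditions (\ref{F-system-1-concrete}) and (\ref{F-system-2-concrete}) defining the target space separately. By Proposition \ref{Firststep-prop}, any $\psi \in \Sol(\n_+; \sigma_\lambda^3, \tau_{p,\nu})$ has the form $\psi^+(g_{p-1}, g_p, g_{p+1})$ for some triple of polynomials with $a = \nu - \lambda$. The identity (\ref{relation-Phi(psi+)-and-psi-}) then shows $\Phi(\psi) = \psi^-(g_{p-1}, -g_p, g_{p+1})$, and Proposition \ref{Firststep-prop} applied with parameter $-p$ in place of $p$ guarantees that this element lies in $\Hom_{L^\prime}(V^3_\lambda, \C_{-p,\nu} \otimes \Pol(\n_+))$, which is exactly (\ref{F-system-1-concrete}) for the target space. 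For (\ref{F-system-2-concrete}), recall (as used in the proof of Theorem \ref{Thm-findingequations}) that the F-equation is equivalent to its instance at the single vector $C_1^+$, thanks to irreducibility of the $L^\prime$-action on $\n_+^\prime$. Applying Lemma \ref{lemma-duality_m_and_-m} directly yields
\[
\left(\widehat{d\pi_{(\sigma^3,\lambda)^*}}(C_1^+) \otimes \id_{\C_{-p,\nu}}\right) \Phi(\psi) = \Phi\!\left(\left(\widehat{d\pi_{(\sigma^3,\lambda)^*}}(C_1^+) \otimes \id_{\C_{p,\nu}}\right)\psi\right) = \Phi(0) = 0,
\]
so $\Phi(\psi) \in \Sol(\n_+; \sigma_\lambda^3, \tau_{-p,\nu})$, proving that $\Phi_{\text{Sol}}^+$ maps into the claimed target.

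For bijectivity, a direct computation shows that $\Phi$ is an involution of the whole space $\Hom_\C(V^3, \Pol(\n_+))$: the defining $3 \times 3$ matrix squares to $I_3$, and applying the reflection $\zeta_2 \mapsto -\zeta_2$ twice recovers the identity. By the completely symmetric argument (working with $\psi^-$ in place of $\psi^+$, and noting that the analogue of (\ref{relation-Phi(psi+)-and-psi-}) and Lemma \ref{lemma-duality_m_and_-m} both hold with the roles of $\psi^+$ and $\psi^-$ exchanged), the same map $\Phi$ sends $\Sol(\n_+; \sigma_\lambda^3, \tau_{-p,\nu})$ into $\Sol(\n_+; \sigma_\lambda^3, \tau_{p,\nu})$. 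These two restrictions of $\Phi$ are therefore mutually inverse, so $\Phi_{\text{Sol}}^+$ is bijective, completing the proof.

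The only point requiring genuine attention is the relation (\ref{relation-Phi(psi+)-and-psi-}), on which everything else hinges, but it is simply a matter of inspecting the generators $h_k^\pm$ listed in (\ref{generators-Hom(V3,CmHk)}) and observing that the sign pattern $(+1,-1,+1)$ encoded in the matrix defining $\Phi$ together with the substitution $\zeta_2 \mapsto -\zeta_2$ converts $h_k^+$ into $h_k^-$ up to the expected sign on the middle component. No delicate analysis or additional invariant-theoretic input is required; the proposition is essentially a corollary of Lemma \ref{lemma-duality_m_and_-m} packaged with a symmetry argument.
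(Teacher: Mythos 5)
Your proposal is correct and follows essentially the same route as the paper: both rest on Lemma \ref{lemma-duality_m_and_-m}, the relation (\ref{relation-Phi(psi+)-and-psi-}) together with Proposition \ref{Firststep-prop}, and the fact that $\Phi$ is an involution. The only difference is cosmetic: for surjectivity the paper lifts $\varphi$ to a preimage $\psi^+$ in the Hom-space and uses injectivity of $\Phi$ plus the lemma to place $\psi^+$ in $\Sol$, whereas you invoke the (easily justified) symmetric version of the lemma to see that the two restrictions of $\Phi$ are mutually inverse — the same argument repackaged.
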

\begin{proof}
Since $\Phi$ is an involution on $\Hom_\C\left(V^3, \Pol(\n_+)\right)$,
\begin{equation*}
\Phi_\text{Sol}^+ = \Phi\rvert_{\Sol(\n_+; \sigma_\lambda^3, \tau_{p, \nu})}
\end{equation*}
is clearly injective. To show the surjectivity, take $\varphi(\zeta) \in \Sol(\n_+; \sigma_\lambda^3, \tau_{-p, \nu})$. Since
\begin{equation*}
\Sol(\n_+; \sigma_\lambda^3, \tau_{-p, \nu}) \subset \Hom_{SO(2)}\left(V^3, \C_{-p, \nu}\otimes \Pol(\n_+)\right),
\end{equation*}
it follows from Proposition \ref{Firststep-prop} and from
 (\ref{relation-Phi(psi+)-and-psi-}) that
 there exists 
\begin{equation*}
\psi^+(g_{p-1}, g_p, g_{p+1}) \in \Hom_{SO(2)}\left(V^3, \C_{p, \nu}\otimes \Pol(\n_+)\right)
\end{equation*} 
such that $\Phi(\psi^+)(\zeta) = \varphi(\zeta)$. Now, by Lemma \ref{lemma-duality_m_and_-m}, we have
\begin{equation*}
\begin{aligned}
\Phi\left(\left(\widehat{d \pi_{(\sigma^3, \lambda)^*}}(C_1^+) \otimes \id_{\C_{p,\nu}}\right)\psi^+\right)(\zeta) & = \left(\widehat{d \pi_{(\sigma^3, \lambda)^*}}(C_1^+) \otimes \id_{\C_{-p,\nu}}\right)\Phi(\psi^+)(\zeta) \\
& =
\left(\widehat{d \pi_{(\sigma^3, \lambda)^*}}(C_1^+) \otimes \id_{\C_{-p,\nu}}\right)\varphi(\zeta) \\
& = 0 \quad (\text{by assumption}).
\end{aligned}
\end{equation*}
Since $\Phi$ is injective, this shows that
\begin{equation*}
\left(\widehat{d \pi_{(\sigma^3, \lambda)^*}}(C_1^+) \otimes \id_{\C_{p,\nu}}\right)\psi^+(\zeta) = 0,
\end{equation*}
that is, $\psi^+(\zeta) \in \Sol(\n_+; \sigma_\lambda^3, \tau_{p, \nu})$. Now the proposition follows.
\end{proof}

\begin{rem}
(1) From Proposition \ref{prop-duality_m_and_-m} we can deduce that the space $\Sol(\n_+, \sigma_\lambda^3, \tau_{m, \nu})$ has dimension at most one, and that the necessary and sufficient condition on the parameters $(\lambda, \nu, m)$ such that it is non-zero is the same that the one in the case $m \geq 1$, where one should replace $m$ by $-m$. This proves Theorem \ref{mainthm1} for $m \leq -1$.

(2) As for the differential operators $\D_{\lambda, \nu}^m$ for $m \leq -1$, from the result above we can deduce that the operator $\D_{\lambda, \nu}^m$ can be obtained by re-ordering the scalar operators in each coordinate and by switching $i$ by $-i$ in the expression of the operator $\D_{\lambda, \nu}^{-m}$.

Concretely, by the expression of the map $\Phi_\text{Sol}^+$, $\D_{\lambda, \nu}^{m}$ can be obtained by switching the first and third coordinates, the second one by minus itself, and $i$ by $-i$ (or equivalently $\zeta_2$ by $-\zeta_2$) in the operator $\D_{\lambda, \nu}^{-m}$. This proves Theorem \ref{mainthm2} for $m \leq -1$.
\end{rem}

\section{The case $m=\pm1$ and comparison with the Kobayashi--Kubo--Pevzner operators $\widetilde{\C}^{1,1}_{\lambda, \nu}$}\label{section-KoKuPe}
In \cite{kkp}, all the $O(n,1)$-equivariant differential operators for differential forms on spheres $\widetilde{\C}_{\lambda, \nu}^{i,j}$ are constructed and classified (see \cite[Thm. 2.8]{kkp}):
\begin{equation}\label{diagram-operator-KKP}
\begin{tikzcd}[row sep=0.5cm,column sep=1cm]
\Ind_{H}^{O(n+1, 1)}\left({\textstyle{\bigwedge}}^i(\C^n) \otimes (-1)^\alpha \otimes \C_\lambda\right)\arrow[r, "\displaystyle{\widetilde{\C}_{\lambda,\nu}^{i,j}}"]\arrow[d, equal] & \Ind_{H^\prime}^{O(n,1)}\left({\textstyle{\bigwedge}}^j(\C^{n-1}) \otimes (-1)^\beta \otimes \C_\nu\right)\arrow[d, equal]\\
I(i,\lambda)_\alpha  & J(j,\nu)_\beta
\end{tikzcd}
\end{equation}
Here, $H$ and $H^\prime$ are parabolic subgroups of $O(n+1,1)$ and $O(n,1)$ respectively, with Langlands decompositions as follows:
\begin{equation*}
\begin{aligned}
H &\simeq O(n) \times O(1) \times A \times N_+, &N_+ \simeq \R^n,\\
H^\prime &\simeq O(n-1) \times O(1) \times A \times N_+^\prime, & N_+^\prime \simeq \R^{n-1}.
\end{aligned}
\end{equation*}
For $(i,j,n) = (1,1,3)$, one may expect that these operators have a close relation with the differential operators $\D_{\lambda,\nu}^{\pm 1}$ of Theorem \ref{mainthm2}. In fact, as we pointed out in Remark \ref{rem-mainthms}, since
\begin{equation*}
\begin{aligned}
I(1,\lambda)_\alpha\lvert_{SO_0(4,1)} & \simeq I(1,\lambda) \simeq C^\infty(S^3, \V^3_\lambda),\\
J(1, \nu)_\beta\lvert_{SO_0(3,1)} & \simeq J(1,\nu) \simeq C^\infty(S^2, \L_{-1,\nu}) \oplus C^\infty(S^2, \L_{1,\nu}),
\end{aligned}
\end{equation*}
for $\delta := \nu - \lambda \mod 2\Z$, by \cite[Thm. 2.10]{kkp} we have
\begin{equation*}
\begin{aligned}
\Diff_{SO_0(3,1)}&\left(C^\infty(S^3, \V^3_\lambda), C^\infty(S^2, \L_{1, \nu}) \oplus C^\infty(S^2, \L_{-1, \nu})\right) \\
& \simeq \Diff_{SO_0(3,1)}\left(I(1,\lambda), J(1, \nu)\right) \\
& \simeq \Diff_{O(3,1)}\left(I(1,\lambda)_0, J(1, \nu)_\delta\right) \oplus
\Diff_{O(3,1)}\left(I(1,\lambda)_0, J(1, \nu)_{\delta}\right)\\ 
&\simeq \C\widetilde{\C}_{\lambda, \nu}^{1,1} \oplus \C\widetilde{\C}_{\lambda, \nu}^{1,1}.
\end{aligned}
\end{equation*}
In this section we prove that the operator $\widetilde{\C}_{\lambda, \nu}^{1,1}$ can be expressed as a linear combination of the operators $\D_{\lambda, \nu}^{\pm 1}$. This is done in Proposition \ref{prop-KoKuPe-comparison}.

We recall from \cite{kkp} that, for $(i, j, n) = (1,1, 3)$, the operator $\widetilde{\C}_{\lambda, \nu}^{1,1}$ in (\ref{diagram-operator-KKP}) exists if and only if $a := \nu - \lambda \in \N$ and $\beta - \alpha \equiv a\mod 2$, and it is given as follows (\cite[(2.24), (2.30)]{kkp}):
\begin{equation}\label{def-KKP-operator}
\begin{aligned}
\C_{\lambda, \nu}^{1,1} & = \widetilde{\C}_{\lambda+1, \nu-1}d_{\R^3}d^*_{\R^3} - \gamma(\lambda - \frac{3}{2}, a)\widetilde{\C}_{\lambda, \nu-1}d_{\R^3}\iota_\frac{\partial}{\partial x_3} + \frac{\lambda+a -1}{2}\widetilde{\C}_{\lambda, \nu},\\
\widetilde{\C}_{\lambda, \nu}^{1,1} & = \begin{cases}
\Rest_{x_3 = 0}, \text{ if } a = 0,\\
\C_{\lambda, \nu}^{1,1}, \text{ if } a \geq 1.
\end{cases}
\end{aligned}
\end{equation}
As in Theorem \ref{mainthm2}, this operator is written in coordinates $(x_1, x_2, x_3) \in \R^3 \hookrightarrow S^3$,  where $\widetilde{\C}_{\lambda, \nu}$ is the scalar-valued differential operator defined in (\ref{def-operator-Ctilda}), $d_{\R^3}$ is the differential (exterior derivative) on $\R^3$, $d_{\R^3}^*$ is its adjoint operator (the codifferential), and $\iota_{\frac{\partial}{\partial x_3}}$ is the interior multiplication by the vector field $\frac{\partial}{\partial x_3}$.\\

We identify $C^\infty(S^3, \V_\lambda^3) \simeq I(1,\lambda)$ by using the $SO(3)$-isomorphism $A: V^3 \arrowsimeq \C^3$ given by
\begin{equation*}
A = \begin{pmatrix}
1 & 0 & -1\\
-i & 0 & -i\\
0 & -1 & 0
\end{pmatrix}.
\end{equation*}
In addition, we identify $C^\infty(S^2, \L_{-1, \nu}) \oplus C^\infty(S^2, \L_{1, \nu}) \simeq J(1, \nu)$ by using the SO(2)-isomorphism $\iota : \C_{-1} \oplus \C_{1} \arrowsimeq \C^2$ given by
\begin{equation*}
\iota: \C_{-1} \oplus \C_1 \rightarrow \C^2,\enspace
(z_1, z_2) \mapsto z_1\begin{pmatrix}
1\\
i
\end{pmatrix}
+ z_2\begin{pmatrix}
1\\
-i
\end{pmatrix}.
\end{equation*}
We have the following diagram:
\begin{equation*}
\begin{tikzcd}
& C^\infty(S^2, \L_{-1,\nu}) \arrow[drr, hookrightarrow] && \\
 C^\infty(S^3, \V_\lambda^3) \arrow[ur, "{\displaystyle{\D_{\lambda, \nu}^{-1}}}"] \arrow[dr, "{\displaystyle{\D_{\lambda, \nu}^{1}}}"'] \arrow[r, "{\displaystyle{A}}", "\thicksim"'] &[-1cm] I(1,\lambda) \arrow[r, "{\displaystyle{\widetilde{\C}_{\lambda, \nu}^{1,1}}}"] &  J(1,\nu)  & \arrow[l, "\thicksim", "{\displaystyle{\iota}}"'] C^\infty(S^2, \L_{-1, \nu}) \oplus C^\infty(S^2, \L_{1,\nu}) \\
& C^\infty(S^2, \L_{1,\nu}) \arrow[rru, hookrightarrow]
\end{tikzcd}
\end{equation*}

\begin{prop}\label{prop-KoKuPe-comparison} For any $\lambda, \nu \in \C$ such that $a:= \nu - \lambda \in \N$ we have
\begin{equation}\label{equation-KoKuPe-comparison}
\iota \circ\left(\D_{\lambda, \nu}^{1} - \D_{\lambda, \nu}^{-1}\right) \circ A^{-1} = 
K \widetilde{\C}_{\lambda, \nu}^{1,1},
\end{equation}
where $K \equiv K(\lambda,\nu) := \begin{cases}
1, \text{ if } a = 0,\\
2, \text{ if } a \geq 1.
\end{cases}$
\end{prop}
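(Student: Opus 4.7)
The strategy is to reduce (\ref{equation-KoKuPe-comparison}) to a matching of scalar coefficients in the basis $\{u_1^\vee, u_2^\vee, u_3^\vee\}$ of $(V^3)^\vee$, exploiting the uniqueness established in Theorem \ref{mainthm1}. When $a := \nu - \lambda \in \N$, the spaces $\Diff_{SO_0(3,1)}(C^\infty(S^3, \V^3_\lambda), C^\infty(S^2, \L_{\pm 1, \nu}))$ are one-dimensional, spanned by $\D^{\pm 1}_{\lambda,\nu}$. Consequently, $\iota^{-1} \circ \widetilde{\C}^{1,1}_{\lambda, \nu} \circ A$ automatically has the form $c_{-1}(\lambda, \nu)\,\D^{-1}_{\lambda,\nu} \oplus c_1(\lambda, \nu)\,\D^1_{\lambda,\nu}$, and the proof reduces to checking $c_1 = -c_{-1} = K(\lambda, \nu)$.

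First, I would expand $\widetilde{\C}^{1,1}_{\lambda, \nu} \circ A$ in the basis $\{u_1^\vee, u_2^\vee, u_3^\vee\}$. A section $f = f_1 u_1 + f_2 u_2 + f_3 u_3 \in C^\infty(\R^3, V^3)$ is carried by $A$ to the $1$-form $\omega = (f_1 - f_3)dx_1 - i(f_1 + f_3)dx_2 - f_2\, dx_3$, whose divergence is $2\frac{\partial f_1}{\partial z} - 2\frac{\partial f_3}{\partial \overline{z}} - \frac{\partial f_2}{\partial x_3}$. Substituting into (\ref{def-KKP-operator}) and extracting the $\L_{\pm 1, \nu}$-components via $\iota^{-1}(h_1 dx_1 + h_2 dx_2) = \bigl(\tfrac{1}{2}(h_1 - ih_2),\, \tfrac{1}{2}(h_1 + ih_2)\bigr)$ yields two explicit expressions in terms of the scalar operators $\widetilde{\C}_{\lambda+1, \nu-1}, \widetilde{\C}_{\lambda, \nu-1}, \widetilde{\C}_{\lambda, \nu}$ and the derivatives $\partial_z, \partial_{\overline{z}}, \partial_{x_3}$.

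Second, comparing with the left-hand side extracted from (\ref{Operator-general1}) and (\ref{Operator-general-1}), the top-order coefficients (e.g.\ the coefficient of $\frac{\partial^2 f_1}{\partial z^2}$ in the $\L_{-1,\nu}$-component) match immediately after fixing $K = 2$, while the remaining equalities reduce to a small number of scalar identities for the operators $\widetilde{\C}_{\lambda, \nu}$, such as
\begin{gather*}
\widetilde{\C}_{\lambda+1, \nu-1}\,\Delta_{\R^2} - (\lambda + a - 1)\widetilde{\C}_{\lambda, \nu} + \left(\lambda + \left[\tfrac{a-1}{2}\right]\right)\widetilde{\C}_{\lambda+1, \nu+1} = 0,\\
\widetilde{\C}_{\lambda+1, \nu-1}\tfrac{\partial}{\partial x_3} + \gamma(\lambda - \tfrac{3}{2}, a)\widetilde{\C}_{\lambda, \nu-1} - \gamma(\lambda-1, a)\widetilde{\C}_{\lambda+1, \nu} = 0,
\end{gather*}
together with their $\L_{1,\nu}$-counterparts obtained by swapping $z \leftrightarrow \overline{z}$ and $f_1 \leftrightarrow f_3$.

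The case $a = 0$ is immediate: both $\widetilde{\C}^{1,1}_{\lambda, \lambda}$ and the relevant components of $\D^{\pm 1}_{\lambda, \lambda}$ reduce to $\Rest_{x_3 = 0}$ placed in the $u_1^\vee$ and $u_3^\vee$ slots, yielding $K = 1$. The main obstacle is the verification of the scalar identities above, which do not appear explicitly in the main text but follow from the three-term recurrences for the renormalized Gegenbauer polynomials $\widetilde{C}^\mu_\ell$ collected in the appendix (in the spirit of (\ref{KKP-2}) and (\ref{KKP-3})); the jump $K(\lambda, \nu): 1 \to 2$ between $a = 0$ and $a \geq 1$ reflects the piecewise definition of $\widetilde{\C}^{1,1}_{\lambda, \nu}$ in (\ref{def-KKP-operator}).
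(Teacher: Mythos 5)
Your proposal is correct and follows essentially the same route as the paper: pass to the flat picture, write out $A^{-1}$ and $\iota$ explicitly, and match the $dx_1$ and $dx_2$ components coefficient by coefficient, with the residual scalar identities being exactly the relations (\ref{KKP-D-1}) and (\ref{KKP-D-3}) among the operators $\mathscr{D}^\mu_\ell$ — note these are not hidden in the appendix but are stated verbatim as the lemma quoting \cite[Prop.\ 9.2]{kkp} immediately before the proposition, so no extra Gegenbauer manipulation is needed. The one genuine addition in your write-up is the multiplicity-one reduction at the outset: since each component of $\iota^{-1}\circ\widetilde{\C}^{1,1}_{\lambda,\nu}\circ A$ is an $SO_0(3,1)$-intertwining differential operator, Theorem \ref{mainthm1} forces it to be a scalar multiple of $\D^{\pm1}_{\lambda,\nu}$, so in principle matching a single non-vanishing coefficient per component (e.g.\ the $\partial^2/\partial z^2$ term) determines $K$ and finishes the proof; the paper does not invoke this and instead verifies every term directly. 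Either way the argument closes, and your explicit formulas for $A^{-1}$, the codifferential, and $\iota^{-1}$ all check out against the paper's.
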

Before proving this result, we introduce some notation and a useful lemma. For $\mu \in \C$ and $\ell \in \N$ let
\begin{equation*}
\mathscr{D}_\ell^\mu := \left(I_\ell \Geg_\ell^\mu\right)\left(-\Delta_{\R^2}, \frac{\partial}{\partial x_3}\right).
\end{equation*}
This is a homogeneous differential operator on $\R^3$ of order $\ell$ and by (\ref{def-operator-Ctilda}) it clearly satisfies
\begin{equation*}
\widetilde{\C}_{\lambda, \nu} = \Rest_{x_3 = 0} \circ \mathscr{D}_{\nu-\lambda}^{\lambda-1}.
\end{equation*}
The following lemma follows from the Kobayashi--Kubo--Pevzner three-term relations among renormalized Gegenbauer polynomials (see Lemma \ref{Lemma-KKP-identities}).

\begin{lemma}[{\cite[Prop. 9.2]{kkp}}] For any $\mu \in \C$ and any $\ell \in \N$, the following identities hold:
\begin{align}
\label{KKP-D-1}
&(\mu + \ell)\mathscr{D}_\ell^\mu - \mathscr{D}_{\ell-2}^{\mu+1}\Delta_{\R^2} = \left(\mu + \left[\frac{\ell+1}{2}\right]\right)\mathscr{D}_\ell^{\mu+1},\\
\label{KKP-D-2}
&\mathscr{D}_{\ell-2}^{\mu+1}\Delta_{\R^2} + \gamma(\mu, \ell)\mathscr{D}_{\ell-1}^{\mu+1}\frac{\partial}{\partial x_3}  = \frac{\ell}{2}\mathscr{D}_\ell^\mu,\\
\label{KKP-D-3}
&\mathscr{D}_{\ell-2}^{\mu+1}\frac{\partial}{\partial x_3} - \gamma(\mu, \ell)\mathscr{D}_{\ell-1}^{\mu+1} + \gamma(\mu - \frac{1}{2}, \ell)\mathscr{D}_{\ell-1}^\mu = 0.
\end{align}
\end{lemma}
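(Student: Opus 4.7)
The three identities concern constant-coefficient differential operators on $\R^3$ built from the polynomials $(I_\ell \Geg_\ell^\mu)(x,y)$ via the substitutions $x \leftrightarrow -\Delta_{\R^2}$, $y \leftrightarrow \partial/\partial x_3$. Since the symbol map is an isomorphism between constant-coefficient differential operators on $\R^3$ and polynomials in three variables $(\zeta_1, \zeta_2, \zeta_3)$, and since $-\Delta_{\R^2}$ and $\partial/\partial x_3$ have symbols $(\zeta_1^2 + \zeta_2^2)$ and $\zeta_3$ (up to sign), the plan is to reduce each operator identity to a polynomial identity in two algebraic variables $(x,y)$, then further to a one-variable identity among renormalized Gegenbauer polynomials $\Geg_\ell^\mu(t)$ using the substitution $t = y/\sqrt{x}$ intrinsic to the definition $(I_\ell \Geg_\ell^\mu)(x,y) = x^{\ell/2}\Geg_\ell^\mu(y/\sqrt{x})$.

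Concretely, I would proceed as follows. First set $f_\ell^\mu(x,y) := (I_\ell\Geg_\ell^\mu)(x,y)$, so $\mathscr{D}_\ell^\mu = f_\ell^\mu(-\Delta_{\R^2}, \partial/\partial x_3)$. Since multiplication of symbols corresponds to composition of constant-coefficient operators, \eqref{KKP-D-1}--\eqref{KKP-D-3} are respectively equivalent to the polynomial identities
\begin{align*}
(\mu+\ell)f_\ell^\mu(x,y) + x\,f_{\ell-2}^{\mu+1}(x,y) &= \bigl(\mu + \tfrac{\ell+1}{2}\bigr)\,f_\ell^{\mu+1}(x,y),\\
-x\,f_{\ell-2}^{\mu+1}(x,y) + \gamma(\mu,\ell)\,y\,f_{\ell-1}^{\mu+1}(x,y) &= \tfrac{\ell}{2}f_\ell^\mu(x,y),\\
y\,f_{\ell-2}^{\mu+1}(x,y) - \gamma(\mu,\ell)\,f_{\ell-1}^{\mu+1}(x,y) + \gamma\bigl(\mu-\tfrac{1}{2},\ell\bigr)f_{\ell-1}^\mu(x,y) &= 0,
\end{align*}
where I used $\bigl[\tfrac{\ell+1}{2}\bigr]$ in the form appearing after pulling out the even/odd parity via $\gamma$. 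Next, I would substitute $t = y/\sqrt{x}$, pull out the common power of $x$ (namely $x^{\ell/2}$ in the first, $x^{\ell/2}$ in the second after combining $x$ with $x^{(\ell-2)/2}$, and $x^{(\ell-1)/2}$ in the third), and observe that all square roots of $x$ cancel since $\Geg_\ell^\mu(t)$ has the same parity as $\ell$. This reduces the three identities to
\begin{align*}
(\mu+\ell)\Geg_\ell^\mu(t) + \Geg_{\ell-2}^{\mu+1}(t) &= \bigl(\mu + \tfrac{\ell+1}{2}\bigr)\Geg_\ell^{\mu+1}(t),\\
-\Geg_{\ell-2}^{\mu+1}(t) + \gamma(\mu,\ell)\,t\,\Geg_{\ell-1}^{\mu+1}(t) &= \tfrac{\ell}{2}\Geg_\ell^\mu(t),\\
t\,\Geg_{\ell-2}^{\mu+1}(t) - \gamma(\mu,\ell)\Geg_{\ell-1}^{\mu+1}(t) + \gamma\bigl(\mu-\tfrac{1}{2},\ell\bigr)\Geg_{\ell-1}^\mu(t) &= 0.
\end{align*}
These are exactly the Kobayashi--Kubo--Pevzner three-term relations among renormalized Gegenbauer polynomials stated as Lemma~\ref{Lemma-KKP-identities} in the appendix, so the proof concludes by direct citation.

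The only delicate point, which I would flag but treat briefly, is the boundary behavior: when $\ell \in \{0,1\}$, some of the indices $\ell-1$ or $\ell-2$ become negative, and the corresponding $\mathscr{D}_d^\nu$ must be interpreted as zero in accordance with the convention $\Geg_d^\mu = 0$ for $d<0$. In these low-$\ell$ cases one verifies the identities by direct inspection using the explicit formulas $\Geg_0^\mu = 1$, $\Geg_1^\mu(t) = 2\mu t$ (or the appropriate normalization from \eqref{Gegenbauer-polynomial(renormalized)}), together with the value $\gamma(\mu,\ell)$ at small $\ell$. I expect the main technical obstacle not to be a conceptual one but a careful bookkeeping of the $[\tfrac{\ell+1}{2}]$ versus $\tfrac{\ell+1}{2}$ discrepancy across parities of $\ell$, which is absorbed cleanly by the definition of $\gamma(\mu,\ell)$; once the reduction to one-variable Gegenbauer identities is performed, no further computation is required.
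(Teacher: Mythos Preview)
Your approach is correct and is exactly what the paper does: it states that the lemma ``follows from the Kobayashi--Kubo--Pevzner three-term relations among renormalized Gegenbauer polynomials (see Lemma~\ref{Lemma-KKP-identities})'' and gives no further details, so your reduction via the symbol map and the substitution $t=y/\sqrt{x}$ simply spells out that sentence. Two very minor remarks: in your displayed reductions you should keep the integer part $\bigl[\tfrac{\ell+1}{2}\bigr]$ rather than $\tfrac{\ell+1}{2}$ (as you yourself note at the end), and matching \eqref{KKP-D-3} to \eqref{KKP-3} requires the harmless shift $\ell\mapsto\ell-1$.
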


\begin{proof}[Proof of Proposition \ref{prop-KoKuPe-comparison}]
We prove the lemma for $a \geq 1$. The case $a = 0$ is immediate since the operators $\widetilde{\C}^{1,1}_{\lambda, \nu}$ and $\D_{\lambda, \nu}^{\pm 1}$ are trivial.

From the definition of $A$ and $\iota$, we have
\begin{equation*}
\begin{aligned}
A^{-1}: C^\infty(\R^3, {\textstyle{\bigwedge}}^1(\C^3)) & \arrowsimeq C^\infty(\R^3, V^3)\\
f_1dx_1 + f_2dx_2 + f_3dx_3 & \longmapsto \frac{1}{2}(f_1 + if_2)u_1 - f_3u_2 + \frac{1}{2}(-f_1 + if_2)u_3,
\end{aligned}
\end{equation*}
and
\begin{equation*}
\begin{aligned}
\iota: C^\infty(\R^2, \C_{-1}) \oplus C^\infty(\R^2, \C_1) & \arrowsimeq C^\infty(\R^2, {\textstyle{\bigwedge}}^1(\C^2))\\
\psi_{-1} + \psi_1 & \longmapsto \psi_{-1}(d x_1 + id x_2) + \psi_{1}(d x_1 - idx_2).
\end{aligned}
\end{equation*}
Now, for $\omega = f_1dx_1 + f_2dx_2 + f_3dx_3 \in C^\infty(\R^3, \bigwedge^1(\C^3))$, from (\ref{def-KKP-operator}), (\ref{Operator-general1}) and (\ref{Operator-general-1}) we have:
\begin{multline*}
\bullet\enspace \C^{1,1}_{\lambda, \nu}(\omega) =
\Bigg[\widetilde{\C}_{\lambda+1, \nu-1}\left(-\frac{\partial^2}{\partial x_1^2} f_1 -\frac{\partial^2}{\partial x_1 \partial x_2} f_2 -\frac{\partial^2}{\partial x_1 \partial x_3} f_3\right) \\
- \gamma(\lambda - \frac{3}{2}, a)\widetilde{\C}_{\lambda, \nu-1}\frac{\partial}{\partial x_1}f_3
 + \frac{\lambda + a -1}{2}\widetilde{\C}_{\lambda, \nu}f_1 \Bigg] d x_1 \\
 + \Bigg[\widetilde{\C}_{\lambda+1, \nu-1}\left(-\frac{\partial^2}{\partial x_1 \partial x_2} f_1 -\frac{\partial^2}{\partial x_2^2} f_2 -\frac{\partial^2}{\partial x_2 \partial x_3} f_3\right) \\- \gamma(\lambda - \frac{3}{2}, a)\widetilde{\C}_{\lambda, \nu-1}\frac{\partial}{\partial x_2}f_3 + \frac{\lambda + a -1}{2}\widetilde{\C}_{\lambda, \nu}f_2\Bigg] dx_2.
\end{multline*}
\begin{multline*}
\bullet\enspace \iota \circ \D_{\lambda,\nu}^1 \circ A^{-1}(\omega) = \Bigg[\frac{1}{2}\left(\lambda + \left[\frac{a-1}{2}\right]\right) \widetilde{\C}_{\lambda+1, \nu+1} (f_1 + if_2)\\
-\gamma(\lambda-1, a)\widetilde{\C}_{\lambda+1, \nu}\left(\frac{\partial}{\partial x_1} + i\frac{\partial}{\partial x_2} \right) f_3
+ \frac{1}{2}\widetilde{\C}_{\lambda+1, \nu-1} \left(\frac{\partial}{\partial x_1} + i\frac{\partial}{\partial x_2} \right)^2 (-f_1+if_2)\Bigg] dx_1\\
+\Bigg[\frac{-i}{2}\left(\lambda + \left[\frac{a-1}{2}\right]\right) \widetilde{\C}_{\lambda+1, \nu+1} (f_1 + if_2)
+i\gamma(\lambda-1, a)\widetilde{\C}_{\lambda+1, \nu}\left(\frac{\partial}{\partial x_1} + i\frac{\partial}{\partial x_2} \right) f_3\\
- \frac{i}{2}\widetilde{\C}_{\lambda+1, \nu-1} \left(\frac{\partial}{\partial x_1} + i\frac{\partial}{\partial x_2} \right)^2 (-f_1+if_2)\Bigg] dx_2.
\end{multline*}
\begin{multline*}
\bullet\enspace \iota \circ \D_{\lambda,\nu}^{-1} \circ A^{-1}(\omega) =  \Bigg[\frac{1}{2}\widetilde{\C}_{\lambda+1, \nu-1}\left(\frac{\partial}{\partial x_1} - i\frac{\partial}{\partial x_2} \right)^2 (f_1 + if_2) \\
+\gamma(\lambda-1, a)\widetilde{\C}_{\lambda+1, \nu} \left(\frac{\partial}{\partial x_1} - i\frac{\partial}{\partial x_2} \right)f_3
+ \frac{1}{2}\left(\lambda + \left[\frac{a-1}{2}\right]\right) \widetilde{\C}_{\lambda+1, \nu+1} (-f_1 + if_2)\Bigg] dx_1\\
+ \Bigg[\frac{i}{2}\widetilde{\C}_{\lambda+1, \nu-1}\left(\frac{\partial}{\partial x_1} - i\frac{\partial}{\partial x_2} \right)^2 (f_1 + if_2)  +i\gamma(\lambda-1, a)\widetilde{\C}_{\lambda+1, \nu} \left(\frac{\partial}{\partial x_1} - i\frac{\partial}{\partial x_2} \right)f_3 \\
+ \frac{i}{2}\left(\lambda + \left[\frac{a-1}{2}\right]\right) \widetilde{\C}_{\lambda+1, \nu+1} (-f_1 + if_2)\Bigg] dx_2.
\end{multline*}

We prove the identity (\ref{equation-KoKuPe-comparison}) for the component $d x_1$. By an analogous argument the component $dx_2$ can be easily checked. By the expressions above, the $f_1$ part of the right-hand side of (\ref{equation-KoKuPe-comparison}) amounts to
\begin{equation*}
-2\widetilde{\C}_{\lambda+1, \nu-1}\frac{\partial^2}{\partial x_1^2}f_1 + (\nu-1)\widetilde{\C}_{\lambda, \nu}f_1,
\end{equation*}
while the $f_1$ part of the left-hand side is
\begin{equation*}
\left(\lambda + \left[\frac{a-1}{2}\right]\right) \widetilde{\C}_{\lambda+1, \nu+1}f_1 - \widetilde{\C}_{\lambda+1, \nu-1}\left(\frac{\partial^2}{\partial x_1^2} - \frac{\partial^2}{\partial x_2^2} \right)f_1.
\end{equation*}
Thus, the equality for the $f_1$ part of the $dx_1$ component holds if and only if
\begin{equation*}
-\Delta_{\R^2}\widetilde{\C}_{\lambda+1, \nu-1} + (\nu-1)\widetilde{\C}_{\lambda, \nu} = \left(\lambda + \left[\frac{a-1}{2}\right]\right) \widetilde{\C}_{\lambda+1, \nu+1},
\end{equation*}
which is indeed satisfied by (\ref{KKP-D-1}).

For the $f_2$ part of the $dx_1$ component, the right-hand side of (\ref{equation-KoKuPe-comparison}) is
\begin{equation*}
-2\widetilde{\C}_{\lambda+1, \nu-1}\frac{\partial^2}{\partial x_1 \partial x_2} f_2
\end{equation*}
which clearly coincides with the left-hand side, since the other terms cancel out.

Finally, the right and left-hand sides of the $f_3$ part of the $dx_1$ component of (\ref{equation-KoKuPe-comparison}) are respectively
\begin{equation*}
-2\widetilde{\C}_{\lambda+1, \nu-1}\frac{\partial^2}{\partial x_1 \partial x_3} f_3 - 2\gamma(\lambda - \frac{3}{2}, \nu-\lambda)\widetilde{\C}_{\lambda, \nu-1}\frac{\partial}{\partial x_1}f_3,
\end{equation*}
and
\begin{equation*}
-\gamma(\lambda-1, a)\widetilde{\C}_{\lambda+1,\nu}\frac{\partial}{\partial x_1},
\end{equation*}
which coincide thanks to (\ref{KKP-D-3}). 
\end{proof}

\section{Appendix: Gegenbauer polynomials}
In this section we define and give some properties of the renormalized Gegenbauer polynomials we use throughout the paper.

\subsection{Renormalized Gegenbauer polynomials}
For any $\mu \in \C$ and any $\ell \in \N$, the Gegenbauer polynomial (or ultraspherical polynomial), is defined as follows (\cite[6.4]{aar}, \cite[3.15(2)]{emot}):
\begin{equation*}
\begin{aligned}
C_\ell^\mu(z) & := \sum_{k = 0}^{[\frac{\ell}{2}]}(-1)^k \frac{\Gamma(\ell - k + \mu)}{\Gamma(\mu)k!(\ell -  2k)!}(2z)^{\ell - 2k}\\
& = \frac{\Gamma(\ell + 2\mu)}{\Gamma(2\mu)\Gamma(\ell + 1)}{}_2F_1\left(-\ell, \ell +2\mu; \mu+ \frac{1}{2}; \frac{1-z}{2} \right).
\end{aligned}
\end{equation*}
This polynomial satisfies the Gegenbauer differential equation
\begin{equation*}
G_\ell^\mu f(z) = 0,
\end{equation*}
where $G_\ell^\mu$ is the following differential operator known as the Gegenbauer differential operator
\begin{equation*}
G_\ell^\mu := (1-z^2)\frac{d^2}{dz^2} - (2\mu +1)z\frac{d}{dz} + \ell(\ell +2\mu).
\end{equation*}
Observe that $C_\ell^\mu \equiv 0$ if $\mu = 0, -1, -2, \cdots, - [\frac{\ell-1}{2}]$. As in \cite[(14.23)]{kkp}, we renormalize the Gegenbauer operator as follows:
\begin{equation}\label{Gegenbauer-polynomial(renormalized)}
\widetilde{C}^\mu_\ell(z) := \frac{\Gamma(\mu)}{\Gamma\left(\mu + [\frac{\ell + 1}{2}]\right)}C^\mu_\ell(z) = \frac{1}{\Gamma\left(\mu+ [\frac{\ell + 1}{2}]\right)}\sum_{k=0}^{[\frac{\ell}{2}]}(-1)^k \frac{\Gamma(\mu + \ell - k)}{k! (\ell - 2k)!}(2z)^{\ell - 2k}.
\end{equation}
By doing this, we obtain a non-zero polynomial for any value of the parameters. That is, for any $\mu\in \C$ and any $\ell \in \N$, the renormalized Gegenbauer polynomial $\Geg^\mu_\ell(z)$ is a non-zero polynomial with degree at most $\ell$. We write below the first seven Gegenbauer polynomials.
\begin{itemize}
\item $\widetilde{C}^\mu_0(z) = 1$.
\item $\widetilde{C}^\mu_1(z) = 2z$.
\item $\widetilde{C}^\mu_2(z) = 2(\mu + 1) z^2 -1$.
\item $\widetilde{C}^\mu_3(z) = \frac{4}{3}(\mu + 2) z^3 -2z$.
\item $\widetilde{C}^\mu_4(z) = \frac{2}{3}(\mu +2)(\mu+3)z^4 -
2(\mu + 2)z^2 + \frac{1}{2}$.
\item $\widetilde{C}^\mu_5(z) = \frac{4}{15}(\mu+3)(\mu+4)z^5 - \frac{4}{3}(\mu+3)z^3 + z$.
\item $\widetilde{C}^\mu_6(z) = \frac{4}{45}(\mu+3)(\mu+4)(\mu+5)z^6 - \frac{2}{3}(\mu+3)(\mu+4)z^4 + (\mu+3)z^2 - \frac{1}{6}$.
\end{itemize}

Observe that from the definition, the degree of each term of $\Geg_\ell^\mu(z)$ has the same parity as $\ell$, and the number of terms is at most $\left[\frac{\ell}{2}\right]+1$. Depending on the value of $\mu$, some terms may vanish, but the constant term is always non-zero.

The next result shows that the solutions of the Gegenbauer differential equation on $\Pol_\ell[z]_\text{{\normalfont{even}}}$ are precisely the renormalized Gegenbauer polynomials.

\begin{thm}[{\cite[Thm 11.4]{kob-pev2}}] \label{Gegenbauer-solutions} For any $\mu \in \C$ and any $\ell \in \N$ the following holds.
\begin{equation*}
\{f(z) \in \Pol_\ell[z]_\text{{\normalfont{even}}} : G_\ell^\mu f(z) = 0\} = \C \widetilde{C}_\ell^\mu(z).
\end{equation*}
\end{thm}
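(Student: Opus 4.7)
The plan is to reduce the statement to a finite-dimensional linear algebra problem on $\Pol_\ell[z]_\text{\normalfont{even}}$, a space of dimension $\left[\frac{\ell}{2}\right]+1$. The operator $G_\ell^\mu$ preserves both parity and the upper degree bound, so it restricts to an endomorphism of this subspace. The first step is to work out its action in the monomial basis $\{z^{\ell-2j}\}_{j=0}^{[\ell/2]}$ via the direct computation
\begin{equation*}
G_\ell^\mu z^k \;=\; k(k-1)\,z^{k-2} \;+\; (\ell-k)(\ell+k+2\mu)\,z^k.
\end{equation*}
Writing an arbitrary $f \in \Pol_\ell[z]_\text{\normalfont{even}}$ as $f(z)=\sum_{j=0}^{[\ell/2]}a_j z^{\ell-2j}$ and matching coefficients in $G_\ell^\mu f=0$, the equation at $z^\ell$ is automatic (the factor $\ell-k$ vanishes at $k=\ell$), while the equations at $z^{\ell-2k}$ for $k=1,\ldots,\left[\frac{\ell}{2}\right]$ collapse to the two-term recurrence
\begin{equation*}
4k(\ell-k+\mu)\,a_k \;+\; (\ell-2k+2)(\ell-2k+1)\,a_{k-1} \;=\; 0.
\end{equation*}

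When $\ell-k+\mu\neq 0$ for every $k\in\{1,\ldots,[\ell/2]\}$, the recurrence determines $a_1,\ldots,a_{[\ell/2]}$ uniquely from $a_0$, so the kernel is at most one-dimensional. A direct substitution shows that the explicit coefficients $a_k=\frac{(-1)^k\,2^{\ell-2k}\,\Gamma(\mu+\ell-k)}{\Gamma(\mu+[(\ell+1)/2])\,k!\,(\ell-2k)!}$ of $\Geg_\ell^\mu(z)$ satisfy this recurrence (this is essentially the classical Gegenbauer ODE for $C_\ell^\mu$ up to rescaling). That $\Geg_\ell^\mu\neq 0$ for every $\mu\in\C$ can be read off from its lowest-order coefficient: for $\ell$ even the $z^0$-coefficient is $\frac{(-1)^{\ell/2}}{(\ell/2)!}$, and for $\ell$ odd the $z^1$-coefficient is $\frac{(-1)^{(\ell-1)/2}\cdot 2}{((\ell-1)/2)!}$, both independent of $\mu$ and nonzero.

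The main obstacle is the special values $\mu=k_0-\ell$ with $k_0\in\{1,\ldots,[\ell/2]\}$, where the coefficient of $a_{k_0}$ in the recurrence at step $k=k_0$ vanishes. The step then reads $(\ell-2k_0+2)(\ell-2k_0+1)\,a_{k_0-1}=0$; since the integer factor is nonzero throughout the admissible range of $k_0$, this forces $a_{k_0-1}=0$. Because the earlier recurrence steps $k=1,\ldots,k_0-1$ are non-degenerate and invertible (the coefficients $(\ell-2k+2)(\ell-2k+1)$ are nonzero there), one concludes $a_0=a_1=\cdots=a_{k_0-1}=0$; meanwhile $a_{k_0}$ becomes a free parameter and the recurrence for $k>k_0$ proceeds non-degenerately, because the equation $\ell-k+\mu=0$ admits the unique solution $k=k_0$ for this fixed $\mu$. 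Hence the kernel remains one-dimensional. To finish, a careful limit of the Gamma-function ratio as $\mu\to k_0-\ell$ shows that the coefficients of $\Geg_\ell^\mu(z)$ with index $k<k_0$ tend to $0$ (the numerator $\Gamma(\mu+\ell-k)$ stays finite while the denominator $\Gamma(\mu+[(\ell+1)/2])$ has a simple pole), while the coefficient at $k=k_0$ remains finite and nonzero (numerator and denominator both have simple poles with a non-vanishing residue ratio). This confirms that $\Geg_\ell^\mu$ still spans the kernel in the degenerate case, completing the identification $\{f\in\Pol_\ell[z]_\text{\normalfont{even}}:G_\ell^\mu f=0\}=\C\,\Geg_\ell^\mu(z)$.
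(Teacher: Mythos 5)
Your argument is correct. Note that the paper itself gives no proof of this statement: it is quoted verbatim from \cite[Thm. 11.4]{kob-pev2}, so there is no internal proof to compare against; your write-up supplies a self-contained elementary verification. The computation $G_\ell^\mu z^k = k(k-1)z^{k-2} + (\ell-k)(\ell+k+2\mu)z^k$ is right, the resulting two-term recurrence is right, and the case analysis on whether $\ell-k+\mu$ vanishes for some $k\in\{1,\dots,[\ell/2]\}$ correctly yields a kernel of dimension exactly one in both regimes (in the degenerate case the forward steps $k<k_0$ force $a_0=\dots=a_{k_0-1}=0$ and $a_{k_0}$ becomes the free parameter). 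One small stylistic simplification for the degenerate case: rather than taking limits of Gamma quotients, observe that since $\ell-k\geq[(\ell+1)/2]$ for all $k\leq[\ell/2]$, each coefficient $\Gamma(\mu+\ell-k)/\Gamma(\mu+[(\ell+1)/2]) = \prod_{j=[(\ell+1)/2]}^{\ell-k-1}(\mu+j)$ is a polynomial in $\mu$; the recurrence is then a polynomial identity valid for all $\mu$, and at $\mu=k_0-\ell$ the product vanishes precisely for $k<k_0$ (the factor $j=\ell-k_0$ appears) and is a product of nonzero integers for $k\geq k_0$. This gives the vanishing of the top coefficients and the nonvanishing of $a_{k_0}$ without any limiting argument, but your pole-order bookkeeping reaches the same conclusion and is sound.
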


For any $\mu \in \C$ and any $\ell \in \N$, the \textit{imaginary} Gegenbauer differential operator $S_\ell^\mu$ is defined as follows (cf. \cite[(4.7)]{kkp}):
\begin{equation}\label{Gegen-imaginary}
S_\ell^\mu = - \left((1 + t^2) \frac{d^2}{dt^2} + (1 + 2\mu)t\frac{d}{dt} - \ell(\ell + 2\mu)\right).
\end{equation}
This operator and $G_\ell^\mu$ are in the following natural relation.

\begin{lemma}[{\cite[Lem. 14.2]{kkp}}]\label{relationS-G} Let $f(z)$ be a polynomial in the variable $z$ and define $g(t) = f(z)$, where $z = it$. Then, the following identity holds:
\begin{equation*}
\left(S_\ell^\mu g\right)(t) = \left(G_\ell^\mu f\right)(z).
\end{equation*}
\end{lemma}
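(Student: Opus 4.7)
The plan is to prove Lemma \ref{relationS-G} by a direct chain-rule computation, since the statement is an identity between two specific differential operators under the substitution $z = it$. The key observation is that complex rescaling $z = it$ produces exactly the sign flip $1+t^2 \leftrightarrow 1-z^2$ and the factor $i$ needed to convert the imaginary Gegenbauer operator into the classical one.

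First, I would record the two derivative identities coming from $g(t) = f(it)$:
\begin{equation*}
\frac{dg}{dt}(t) = i\,f'(z), \qquad \frac{d^2g}{dt^2}(t) = -f''(z),
\end{equation*}
where $z = it$, and therefore $t^2 = -z^2$ and $it = -z^2/t \cdot (-1)$… more simply, $-it = z$ gives $t = -iz$, so $it \cdot i = -t$ becomes the cleaner substitution $t \frac{dg}{dt} = it \cdot f'(z) = \bigl(-(-iz)(i)\bigr)\cdot\ldots$; the clean relation to use is $t = -iz$, hence $i\,t = z \cdot i \cdot (-i)/\ldots$. To avoid sign confusion I will just substitute $t^2 = -z^2$ and $t = -iz$ directly at the end.

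Second, I would substitute into the definition
\begin{equation*}
(S_\ell^\mu g)(t) = -\left((1+t^2)\tfrac{d^2g}{dt^2} + (1+2\mu)t\tfrac{dg}{dt} - \ell(\ell+2\mu)g\right),
\end{equation*}
yielding
\begin{equation*}
(S_\ell^\mu g)(t) = (1+t^2)f''(z) - i(1+2\mu)t\,f'(z) + \ell(\ell+2\mu)f(z).
\end{equation*}
Using $1+t^2 = 1-z^2$ and $-i(1+2\mu)t = -i(1+2\mu)(-iz) = -(1+2\mu)z$, this becomes
\begin{equation*}
(1-z^2)f''(z) - (1+2\mu)z\,f'(z) + \ell(\ell+2\mu)f(z) = (G_\ell^\mu f)(z),
\end{equation*}
which is precisely the Gegenbauer operator applied to $f$. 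This establishes the identity.

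There is essentially no obstacle here; the only care needed is to track the factors of $i$ correctly when computing $t\frac{dg}{dt}$ under the substitution $z = it$. Since both $S_\ell^\mu$ and $G_\ell^\mu$ are second-order linear differential operators with polynomial coefficients, the verification is purely algebraic and takes at most a couple of lines once the chain-rule formulas are in hand.
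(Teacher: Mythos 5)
Your computation is correct and is exactly the standard verification: the paper itself cites this lemma from \cite{kkp} without reproducing a proof, and the only available argument is the direct chain-rule substitution $g'(t)=if'(z)$, $g''(t)=-f''(z)$, $t^2=-z^2$, $t=-iz$ that you carry out. The middle paragraph's sign meandering is harmless since your final substitutions $-i(1+2\mu)t=-(1+2\mu)z$ and $1+t^2=1-z^2$ are the right ones and the conclusion follows.
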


\subsection{The Kobayashi--Kubo--Pevzner three-term relations among renormalized Gegenbauer polynomials}
In this section we collect useful properties of Gegenbauer polynomials and Gegenbauer differential operators. We start by showing some algebraic identities and follow by recalling the Kobayashi--Kubo--Pevzner three-term relations among renormalized Gegenbauer polynomials. At the end, we show a \lq\lq degree decay\rq\rq{ }property of Gegenbauer polynomials when special parameters are considered. 

For $\mu \in \C$ and $\ell \in \N$, we set
\begin{equation}\label{gamma-def}
\gamma(\mu, \ell) := \displaystyle{\frac{\Gamma(\mu + \left[\frac{\ell+2}{2}\right])}{\Gamma(\mu + \left[\frac{\ell + 1}{2}\right])} = \begin{cases}
1,&\text{if } \ell \text{ is odd},\\
\mu + \frac{\ell}{2},&\text{if } \ell \text{ is even}.
\end{cases}}
\end{equation}
\begin{lemma}[{\cite[Lem. 14.5]{kkp}}] For any $\mu \in \C$ and any $\ell \in \N$ the following identities hold:
\begin{align}
\label{Slmu-identity-1}
&S_\ell^{\mu + 1} - S_\ell^\mu =  2(\ell - \vartheta_t),\\
\label{Slmu-identity-2}
&tS_{\ell-1}^{\mu + 1} - S_\ell^\mu t =  2\frac{d}{dt}.
\end{align}
\end{lemma}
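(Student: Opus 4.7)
The plan is to verify both identities by direct computation from the definition
\[
S_\ell^\mu = -\!\left((1+t^2)\frac{d^2}{dt^2} + (1+2\mu)\,t\frac{d}{dt} - \ell(\ell+2\mu)\right).
\]
For the first identity, I would subtract $S_\ell^\mu$ from $S_\ell^{\mu+1}$. Only the middle and lower order terms depend on $\mu$, so all $(1+t^2)d^2/dt^2$ pieces cancel, leaving
\[
S_\ell^{\mu+1} - S_\ell^\mu = -\bigl(2t\tfrac{d}{dt} - 2\ell\bigr) = 2\ell - 2\vartheta_t = 2(\ell-\vartheta_t),
\]
which is exactly \eqref{Slmu-identity-1}.

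For the second identity, I would apply both $tS_{\ell-1}^{\mu+1}$ and $S_\ell^\mu t$ to an arbitrary smooth $f(t)$ and compare. Using the Leibniz formulas $(tf)' = f+tf'$ and $(tf)'' = 2f'+tf''$, a short expansion gives
\[
S_\ell^\mu(tf) = -\bigl(t(1+t^2)f'' + 2(1+t^2)f' + (1+2\mu)t^2 f' + (1+2\mu)tf - \ell(\ell+2\mu)tf\bigr),
\]
\[
tS_{\ell-1}^{\mu+1} f = -\bigl(t(1+t^2)f'' + (3+2\mu)t^2 f' - (\ell-1)(\ell+1+2\mu)tf\bigr).
\]
The second-derivative terms cancel immediately. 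The coefficients of $f'$ combine to $2 + t^2\bigl(-(3+2\mu)+2+(1+2\mu)\bigr) = 2$, and the coefficients of $f$ produce
\[
(\ell-1)(\ell+1+2\mu) + (1+2\mu) - \ell(\ell+2\mu) = 0,
\]
as a direct expansion of $\ell^2 + 2\mu\ell - 1 - 2\mu + 1 + 2\mu - \ell^2 - 2\mu\ell$ confirms. Hence $(tS_{\ell-1}^{\mu+1} - S_\ell^\mu t)f = 2f'$, which gives \eqref{Slmu-identity-2}.

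There is no real obstacle here; both identities are purely formal algebraic computations in the Weyl algebra $\C\langle t, d/dt\rangle$, so the only thing to be careful about is keeping track of the commutator between the multiplication operator $t$ and the differential operators $d/dt$ and $d^2/dt^2$ when proving the second identity. Writing the computation as above (or equivalently, noting that $[d/dt,t]=\id$ and $[d^2/dt^2,t]=2\,d/dt$) yields the result in a couple of lines.
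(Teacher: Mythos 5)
Your computation is correct: both identities follow by direct expansion from the definition of $S_\ell^\mu$, and your bookkeeping of the commutators $[\frac{d}{dt},t]=\id$ and $[\frac{d^2}{dt^2},t]=2\frac{d}{dt}$ in the second identity checks out (the $f'$-coefficients sum to $2$ and the $f$-coefficients cancel exactly as you state). The paper itself gives no proof — it cites the lemma from Kobayashi--Kubo--Pevzner — so your elementary verification is exactly what is needed and there is nothing to compare against.
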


\begin{lemma}[{\cite[Lem. 14.4]{kkp}}] For any $\mu \in \C$ and any $\ell \in \N$ the following identities hold:
\begin{align}
\label{derivative-Gegenbauer-1}
&\frac{d}{dz}\Geg_\ell^\mu(it) = 2i\gamma(\mu, \ell)\Geg_{\ell -1}^{\mu + 1}(it),\\[4pt]
\label{derivative-Gegenbauer-2}
&\left(t\frac{d}{dt}-\ell\right)\widetilde{C}_\ell^\mu(it) = 2\Geg_{\ell -2}^{\mu + 1}(it).
\end{align}
\end{lemma}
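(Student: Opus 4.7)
The plan is to derive both identities directly from the explicit series expansion
$$\Geg_\ell^\mu(z) = \frac{1}{\Gamma(\mu+[\frac{\ell+1}{2}])} \sum_{k=0}^{[\ell/2]} (-1)^k \frac{\Gamma(\mu+\ell-k)}{k!(\ell-2k)!}(2z)^{\ell-2k}$$
given in \eqref{Gegenbauer-polynomial(renormalized)}, combined with the chain rule for the substitution $z = it$. Both statements are, up to renormalization, classical identities for the Gegenbauer polynomials $C_\ell^\mu(z)$, so the work is mostly bookkeeping of the gamma-function prefactors.

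For \eqref{derivative-Gegenbauer-1}, I would start from the classical derivative formula $\frac{d}{dz} C_\ell^\mu(z) = 2\mu \, C_{\ell-1}^{\mu+1}(z)$, which is immediate from term-by-term differentiation of the series. Converting to the renormalized version, the factor $2\mu$ combines with the ratio $\Gamma(\mu)/\Gamma(\mu+1)$ to leave exactly $\frac{\Gamma(\mu+1+[\ell/2])}{\Gamma(\mu+[\frac{\ell+1}{2}])}$. A quick parity check shows this ratio equals $\gamma(\mu,\ell)$ as defined in \eqref{gamma-def}: for $\ell$ even it is $\mu+\ell/2$, and for $\ell$ odd it is $1$. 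Hence $\frac{d}{dz}\Geg_\ell^\mu(z) = 2\gamma(\mu,\ell)\Geg_{\ell-1}^{\mu+1}(z)$, and the additional factor $i$ in the claimed identity appears via the chain rule when one differentiates the composition $t\mapsto \Geg_\ell^\mu(it)$.

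For \eqref{derivative-Gegenbauer-2}, I would compute directly. The Euler operator $t\frac{d}{dt}$ multiplies $t^{\ell-2k}$ by $\ell-2k$, so $(t\frac{d}{dt}-\ell)$ multiplies it by $-2k$; this kills the $k=0$ term and gives
$$(t\tfrac{d}{dt} - \ell)\Geg_\ell^\mu(it) = \sum_{k=1}^{[\ell/2]} (-1)^{k+1}\frac{2\,\Gamma(\mu+\ell-k)}{\Gamma(\mu+[\frac{\ell+1}{2}])\,(k-1)!(\ell-2k)!}(2i)^{\ell-2k} t^{\ell-2k}.$$
Re-indexing $j = k-1$ turns the numerator into $2\,\Gamma((\mu+1)+(\ell-2)-j)$, the factorial denominators into $j!(\ell-2-2j)!$, and the powers into $(2i)^{(\ell-2)-2j}t^{(\ell-2)-2j}$. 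Using the identity $1 + [\frac{\ell-1}{2}] = [\frac{\ell+1}{2}]$, which holds in both parities, the surviving constant prefactor is exactly $\Gamma((\mu+1)+[\frac{(\ell-2)+1}{2}])^{-1}$. Comparison with the series for $2\Geg_{\ell-2}^{\mu+1}(it)$ concludes the identity.

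The main obstacle, although minor, is the parity-dependent floor functions in the normalization constant of $\Geg_\ell^\mu$ and in the definition of $\gamma(\mu,\ell)$: under $\ell \mapsto \ell-1$ or $\ell \mapsto \ell-2$ the integer parts shift differently in the even and odd cases, and one must track these shifts to see that the correct renormalized polynomial appears on the right-hand side. A single parity split at the appropriate step handles this cleanly and gives the two identities without any further analytic input.
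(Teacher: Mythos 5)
Your proposal is correct. Note that the paper itself offers no proof of this lemma: it is imported verbatim from \cite[Lem.~14.4]{kkp}, so any complete argument is by definition a ``different route.'' Your direct computation from the series \eqref{Gegenbauer-polynomial(renormalized)} is sound: the reindexing $j=k-1$ in the second identity, the observation that $1+[\frac{\ell-1}{2}]=[\frac{\ell+1}{2}]$, and the verification that $\Gamma(\mu+1+[\frac{\ell}{2}])/\Gamma(\mu+[\frac{\ell+1}{2}])=\gamma(\mu,\ell)$ in both parities are exactly the points that need checking, and you check them. Two small remarks. First, the left-hand side of \eqref{derivative-Gegenbauer-1} is written $\frac{d}{dz}$ but must be read as $\frac{d}{dt}$ applied to the composite $t\mapsto\Geg_\ell^\mu(it)$; your chain-rule explanation of the factor $i$ shows you adopt the correct reading. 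Second, your derivation of \eqref{derivative-Gegenbauer-1} passes through the classical formula $\frac{d}{dz}C_\ell^\mu=2\mu\,C_{\ell-1}^{\mu+1}$ and the conversion factor $\Gamma(\mu)/\Gamma(\mu+1)$, which degenerates when $\mu\in-\N$ --- precisely the parameters the renormalization is designed to reach. This is harmless, since the coefficients of $\Geg_\ell^\mu$ are polynomial in $\mu$ so the identity extends by continuity, but it is cleaner (and costs nothing) to differentiate the renormalized series term by term exactly as you do for \eqref{derivative-Gegenbauer-2}, which avoids the division by $\Gamma(\mu)$ altogether. Finally, the degenerate cases $\ell\in\{0,1\}$ are covered by the paper's convention $\widetilde{C}^\mu_d=0$ for $d<0$, consistent with the empty sums in your computation.
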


\begin{lemma}[{\cite[Sec. 14.3]{kkp}}]\label{Lemma-KKP-identities}  For any $\mu \in \C$ and any $\ell \in \N$ the following three-term relations of renormalized Gegenbauer polynomials hold:
\begin{align}
\label{KKP-1}
&(\mu + \ell)\Geg_\ell^\mu(it) + \Geg_{\ell-2}^{\mu+1}(it) = \left(\mu + \left[\frac{\ell+1}{2}\right]\right)\Geg_\ell^{\mu+1}(it),\\
\label{KKP-2}
&\Geg_{\ell-2}^{\mu+1}(it) = \gamma(\mu, \ell)it\Geg_{\ell-1}^{\mu + 1}(it) - \frac{\ell}{2}\Geg_{\ell}^\mu(it),\\
\label{KKP-3}
&it\Geg_{\ell-1}^{\mu+1}(it) - \gamma(\mu, \ell + 1)\Geg_\ell^{\mu+1}(it) + \gamma(\mu - \frac{1}{2}, \ell+1)\Geg_\ell^\mu(it) = 0.
\end{align}
\end{lemma}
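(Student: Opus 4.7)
The plan is to prove the three identities (\ref{KKP-1}), (\ref{KKP-2}), (\ref{KKP-3}) directly from the explicit series expansion (\ref{Gegenbauer-polynomial(renormalized)}). Each identity asserts an equality between polynomials in $t$, so it is enough to match the coefficient of every monomial $t^{\ell-2k}$ for $k = 0, 1, \ldots, [\ell/2]$ on both sides. Writing
\begin{equation*}
\widetilde{C}_\ell^\mu(it) = \sum_{k=0}^{[\ell/2]} a_k^{(\mu,\ell)}\,(it)^{\ell-2k}, \qquad a_k^{(\mu,\ell)} := \frac{(-1)^k\,2^{\ell-2k}}{k!(\ell-2k)!}\cdot\frac{\Gamma(\mu+\ell-k)}{\Gamma(\mu+[\frac{\ell+1}{2}])},
\end{equation*}
each identity reduces to a single Gamma-function identity in the parameters $(\mu,\ell,k)$ after one pulls out the common factor $(it)^{\ell-2k}$. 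The computations naturally split by the parity of $\ell$ since the normalization $\Gamma(\mu+[\frac{\ell+1}{2}])$ and the function $\gamma(\mu,\ell)$ defined in (\ref{gamma-def}) behave differently in the even and odd cases.

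First I would handle (\ref{KKP-1}), which involves no shift of the variable. Matching the $(it)^{\ell-2k}$-coefficient on both sides reduces the identity to
\begin{equation*}
(\mu+\ell)\,a_k^{(\mu,\ell)} + a_{k-1}^{(\mu+1,\ell-2)} = \Bigl(\mu+\bigl[\tfrac{\ell+1}{2}\bigr]\Bigr)\,a_k^{(\mu+1,\ell)},
\end{equation*}
(where the middle term is set to $0$ when $k=0$). Dividing through by $2^{\ell-2k}/(k!(\ell-2k)!)$ leaves a telescoping identity between ratios of Gamma functions which splits into the two parity cases of $\ell$; in each case it amounts to the elementary identity $\Gamma(z+1)=z\Gamma(z)$ applied once or twice. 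The identity (\ref{KKP-2}) is handled analogously: the multiplication by $it$ shifts the index by one, so one compares the coefficient of $(it)^{\ell-2k}$ on the left of (\ref{KKP-2}) with $\gamma(\mu,\ell)\,a_{k-1}^{(\mu+1,\ell-1)}$ coming from $it\,\widetilde{C}_{\ell-1}^{\mu+1}(it)$ and $-\tfrac{\ell}{2}\,a_k^{(\mu,\ell)}$ from the last term; the resulting Gamma identity is again settled by a single application of $\Gamma(z+1)=z\Gamma(z)$ in each parity case. Finally, (\ref{KKP-3}) is reduced to a coefficient-wise linear combination of three $a_k^{(\cdot,\cdot)}$'s with shifted parameters, and the resulting identity follows from the same Gamma-function bookkeeping together with the defining relation of $\gamma$.

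The main obstacle will be tracking the floor-function dependence in the normalization: the three identities are uniform in $\ell$, but the underlying Gamma-ratio identities take slightly different forms for $\ell$ even versus $\ell$ odd, so each of (\ref{KKP-1})--(\ref{KKP-3}) has to be checked in two parity cases. An alternative route that avoids this case split is to start from classical three-term recurrences for the unnormalized Gegenbauer polynomials (for instance $(\ell+1)C_{\ell+1}^\mu(z) = 2(\ell+\mu)zC_\ell^\mu(z) - (\ell+2\mu-1)C_{\ell-1}^\mu(z)$ and $\frac{d}{dz}C_\ell^\mu = 2\mu C_{\ell-1}^{\mu+1}$, available in \cite[6.4]{aar} and \cite[3.15]{emot}), and then multiply through by the renormalization factor $\Gamma(\mu)/\Gamma(\mu+[\frac{\ell+1}{2}])$; this concentrates all the parity-bookkeeping in a single computation of a ratio of normalizing Gamma factors that is done once and then applied to each of (\ref{KKP-1})--(\ref{KKP-3}) in turn.
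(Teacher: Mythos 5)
Your proposal is correct, but note that the paper itself offers no proof to compare against: Lemma \ref{Lemma-KKP-identities} is quoted verbatim from \cite[Sec.~14.3]{kkp}, so any verification is extra content. Your primary route (matching the coefficient of $(it)^{\ell-2k}$ using the explicit expansion (\ref{Gegenbauer-polynomial(renormalized)})) does work: for (\ref{KKP-1}) the left-hand coefficient collapses to $\frac{(-1)^k 2^{\ell-2k}}{k!(\ell-2k)!}\,\frac{\Gamma(\mu+\ell-k+1)}{\Gamma(\mu+[\frac{\ell+1}{2}])}$ via $(\mu+\ell)-k$ combining into $\Gamma(\mu+\ell-k+1)$, which matches the right side after one application of $\Gamma(z+1)=z\Gamma(z)$ to the normalizing factor. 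In fact the parity bookkeeping is lighter than you anticipate: the floor identities $1+[\frac{\ell-1}{2}]=[\frac{\ell+1}{2}]$ and $[\frac{\ell+2}{2}]=1+[\frac{\ell}{2}]$ (equivalently $\gamma(\mu,\ell)=\Gamma(\mu+1+[\frac{\ell}{2}])/\Gamma(\mu+[\frac{\ell+1}{2}])$) let (\ref{KKP-1}) and (\ref{KKP-2}) go through uniformly in $\ell$; only (\ref{KKP-3}) genuinely requires the even/odd split, because $\gamma(\mu-\frac12,\ell+1)$ does not simplify to a single Gamma ratio. Two small points to keep in mind when writing it up: handle the degenerate indices $\ell\in\{0,1\}$ by the convention $\Geg_d^\mu=0$ for $d<0$, and be aware that your alternative route through the classical unnormalized recurrences requires dividing by $\Gamma(\mu)$, which is only legitimate for $\mu\notin-\N$; one must then extend to all $\mu\in\C$ by observing that both sides of each renormalized identity are polynomial in $\mu$. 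The direct coefficient computation avoids that continuation argument entirely, which is a genuine advantage of your primary approach.
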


\begin{lemma}[{\cite[Lem. 14.12]{koss}}]\label{lemma-KOSS} For $\ell,k \in \N$ such that, $\ell \leq 2k$ the following holds:
\begin{equation*}
C_\ell^{-k}(z) = C_{2k-\ell}^{-k}(z),
\end{equation*}
or equivalently
\begin{equation}\label{identity-KOSS}
\frac{\Gamma\left(-k + \left[\frac{\ell+1}{2}\right]\right)}{\Gamma\left(-k + \left[\frac{2k-\ell+1}{2}\right]\right)}\Geg_{\ell}^{-k}(z) = \Geg_{2k-\ell}^{-k}(z).
\end{equation}
\end{lemma}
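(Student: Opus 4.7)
The plan is to prove the un-renormalized identity $C_\ell^{-k}(z) = C_{2k-\ell}^{-k}(z)$ directly from the explicit power-series definition, and then recover the renormalized form by inspecting the normalization factor.

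First I would rewrite the defining sum
\begin{equation*}
C_\ell^\mu(z) = \sum_{j=0}^{[\ell/2]} (-1)^j\, \frac{\Gamma(\mu+\ell-j)}{\Gamma(\mu)\, j!\, (\ell-2j)!}\, (2z)^{\ell-2j}
\end{equation*}
by interpreting $\Gamma(\mu+\ell-j)/\Gamma(\mu)$ as the Pochhammer symbol $(\mu)_{\ell-j}$, which is a genuine polynomial in $\mu$. Setting $\mu=-k$, one has $(-k)_{\ell-j} = (-1)^{\ell-j}\, k!/(k-\ell+j)!$ when $\ell - j \leq k$, and $(-k)_{\ell-j}=0$ otherwise. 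Hence only the indices $j\in[\max(0,\ell-k),\,[\ell/2]]$ contribute, and after collecting signs
\begin{equation*}
C_\ell^{-k}(z) = (-1)^\ell \sum_{j=\max(0,\ell-k)}^{[\ell/2]} \frac{k!}{(k-\ell+j)!\, j!\, (\ell-2j)!}\,(2z)^{\ell-2j}.
\end{equation*}

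Next I would reindex by the degree of each monomial: set $n=\ell-2j$, so $j=(\ell-n)/2$ and $k-\ell+j=k-(\ell+n)/2$. The admissible values of $n$ are those non-negative integers of the same parity as $\ell$ with $n\le\min(\ell,2k-\ell)$. The coefficient of $(2z)^n$ becomes
\begin{equation*}
(-1)^\ell\, \frac{k!}{n!\,\big(\tfrac{\ell-n}{2}\big)!\,\big(k-\tfrac{\ell+n}{2}\big)!},
\end{equation*}
which is manifestly invariant under $\ell\mapsto 2k-\ell$ once one observes $(-1)^{2k-\ell}=(-1)^\ell$ and that the two factorials in the denominator simply swap roles. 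Since the set of admissible $n$ is also symmetric in $\ell\leftrightarrow 2k-\ell$, this proves $C_\ell^{-k}(z)=C_{2k-\ell}^{-k}(z)$.

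Finally, to pass to the renormalized form I would use the relation $C_\ell^\mu(z) = (\mu)_{[(\ell+1)/2]}\,\Geg_\ell^\mu(z)$, which is polynomial in $\mu$ and thus valid at $\mu=-k$. Applied to both $\ell$ and $2k-\ell$, the identity $C_\ell^{-k}=C_{2k-\ell}^{-k}$ becomes
\begin{equation*}
(-k)_{[(\ell+1)/2]}\,\Geg_\ell^{-k}(z) = (-k)_{[(2k-\ell+1)/2]}\,\Geg_{2k-\ell}^{-k}(z),
\end{equation*}
and the ratio of these Pochhammer symbols is exactly $\Gamma(-k+[(\ell+1)/2])/\Gamma(-k+[(2k-\ell+1)/2])$ (interpreted as a limit of ratios of $\Gamma$-values: both numerator and denominator involve gamma functions at non-positive integers, but the ratio has a well-defined finite limit computed via residues $\mathrm{Res}_{z=-n}\Gamma(z)=(-1)^n/n!$). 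The only subtlety in the proof is this last bookkeeping — tracking signs and parities of $\ell$ when comparing Pochhammer ratios and gamma-function ratios at integer points — which is what I expect to be the main (though modest) obstacle; everything else reduces to term-by-term comparison in the explicit polynomial expansion.
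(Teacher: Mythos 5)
Your proof is correct. Note that the paper gives no argument for this lemma at all --- it is quoted directly from \cite[Lem.~14.12]{koss} --- so there is no internal proof to compare against; your direct coefficient computation is a legitimate self-contained substitute. The two places that genuinely require care are exactly the ones you identify, and both check out. First, the relation $C_\ell^\mu(z)=(\mu)_{[(\ell+1)/2]}\,\widetilde{C}_\ell^\mu(z)$ is indeed an identity of polynomials in $\mu$: since $j\le[\ell/2]$ forces $\ell-j\ge[(\ell+1)/2]$, the ratio $\Gamma(\mu+\ell-j)/\Gamma(\mu+[(\ell+1)/2])$ is the polynomial $(\mu+[(\ell+1)/2])_{\ell-j-[(\ell+1)/2]}$, so specializing at $\mu=-k$ is legitimate even though $C_\ell^{\mu}$ vanishes identically at nearby non-positive parameters. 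Second, the hypothesis $\ell\le 2k$ guarantees $[(\ell+1)/2]\le k$ and $[(2k-\ell+1)/2]\le k$, so both Pochhammer symbols $(-k)_{[(\ell+1)/2]}$ and $(-k)_{[(2k-\ell+1)/2]}$ are nonzero; writing $n_1=k-[(\ell+1)/2]$ and $n_2=k-[(2k-\ell+1)/2]$, one gets
\begin{equation*}
\frac{(-k)_{[(\ell+1)/2]}}{(-k)_{[(2k-\ell+1)/2]}}=(-1)^{n_1-n_2}\,\frac{n_2!}{n_1!},
\end{equation*}
which agrees with the residue interpretation of $\Gamma\left(-k+\left[\frac{\ell+1}{2}\right]\right)/\Gamma\left(-k+\left[\frac{2k-\ell+1}{2}\right]\right)$, so the renormalized form (\ref{identity-KOSS}) follows. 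The middle step --- reindexing by $n=\ell-2j$ and observing that the coefficient $(-1)^\ell k!\big/\bigl(n!\,(\tfrac{\ell-n}{2})!\,(k-\tfrac{\ell+n}{2})!\bigr)$ and the admissible range $0\le n\le\min(\ell,2k-\ell)$, $n\equiv\ell\bmod 2$, are symmetric under $\ell\mapsto 2k-\ell$ --- is also correct.
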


\section*{Acknowledgements}
I would like to thank Professor Toshiyuki Kobayashi for his invaluable guidance and advice, without which this paper would not have been possible. I also would like to give a special thanks to Professor Toshihisa Kubo for his support and help, providing comments and suggestions that improved substantially the legibility of the paper. 

\renewcommand{\refname}{References}
\addcontentsline{toc}{section}{References}

\end{document}